\DeclareSymbolFont{bbold}{U}{bbold}{m}{n}
\DeclareSymbolFontAlphabet{\mathbbold}{bbold}
\def\qmod#1#2{{\hbox{}^{\displaystyle{#1}}}\!\big/\!\hbox{}_{
\displaystyle{#2}}}
\def\resto#1#2{{
#1\hskip 0.4ex\vline_{\hskip 0.2ex\raisebox{-0,2ex}
{{${\scriptstyle #2}$}}}}}
\def\C{{\mathbb C}}
\def\N{{\mathbb N}}
\def\P{{\mathbb P}}
\def\Q{{\mathbb Q}}
\def\Z{{\mathbb Z}}
\def\union{\mathop{\bigcup}}
\def\textmap#1{\mathop{\vbox{\ialign{
                                  ##\crcr
      ${\scriptstyle\hfil\;\;#1\;\;\hfil}$\crcr
      \noalign{\kern 1pt\nointerlineskip}
      \rightarrowfill\crcr}}\;}}
\def\bigtextmap#1{\mathop{\vbox{\ialign{
                                  ##\crcr
      ${\hfil\;\;#1\;\;\hfil}$\crcr
      \noalign{\kern 1pt\nointerlineskip}
      \rightarrowfill\crcr}}\;}}
\newcommand{\cal}{\mathcal}
\def\textlmap#1{\mathop{\vbox{\ialign{
                                  ##\crcr
      ${\scriptstyle\hfil\;\;#1\;\;\hfil}$\crcr
      \noalign{\kern-1pt\nointerlineskip}
      \leftarrowfill\crcr}}\;}}
\def\g{{\mathfrak g}}
\def\mg{{\mathfrak m}}
\def\sg{{\mathfrak s}}
\def\Dg{{\mathfrak D}}
\def\Wg{{\mathfrak W}}
\theoremstyle{remark}
\newtheorem{ex}{Example}[section]
\theoremstyle{plain}
\newtheorem{sz}{Satz}[section]
\newtheorem{thry}[sz]{Theorem}
\newtheorem{pr}[sz]{Proposition}
\newtheorem{re}[sz]{Remark}
\newtheorem{co}[sz]{Corollary}
\newtheorem{dt}[sz]{Definition}
\newtheorem{lm}[sz]{Lemma}
\def\tr{\mathrm {Tr}}
\def\End{\mathrm {End}}
\def\SO{\mathrm {SO}}
\def\GL{\mathrm {GL}}
\def\gl{\mathrm {gl}}
\def\Hom{\mathrm{Hom}}
\def\id{ \mathrm{id}}
\def\im{\mathrm{im}}
\def\rk{\mathrm {rk}}
\def\ad{\mathrm {ad}}
\def\st{\mathrm {st}}
\def\ss{\mathrm {ss}}
\def\us{\mathrm {us}}
\def\oo{{\scriptstyle{\cal O}}}
\def\Crit{\mathrm{Crit}}
\def\Qcoh{\mathrm{Qcoh}}
\newcommand{\catqot}{/\hskip-2pt/}
\newcommand\smvee{{\hskip -0.15ex \raise 0.2ex\hbox{$\scriptscriptstyle\vee$}\hskip -0.3ex}}
\newcommand{\extpw}{\mathchoice{{\textstyle\bigwedge}}%
    {{\bigwedge}}%
    {{\textstyle\wedge}}%
    {{\scriptstyle\wedge}}}
\def\extp{{\extpw}\hspace{-2pt}}
\def\Spec{\mathrm{Spec}}
\def\Ext{\mathrm{Ext}}
\def\Gr{\mathrm{Gr}}
\def\coh{\mathrm{coh}}
\def\Qcoh{\mathrm{Qcoh}}
\def\Perf{\mathrm{Perf}}
\def\Mod{\mathrm{Mod}}
\def\mod{\mathrm{mod}}
\def\gr{\mathrm{gr}}
\def\mod{\mathrm{mod}}
\def\Mod{\mathrm{Mod}}
\def\op{{\mathrm{op}}}
\DeclareFontFamily{OMX}{MnSymbolE}{}
\DeclareSymbolFont{MnLargeSymbols}{OMX}{MnSymbolE}{m}{n}
\DeclareFontShape{OMX}{MnSymbolE}{m}{n}{
    <-6>  MnSymbolE5
   <6-7>  MnSymbolE6
   <7-8>  MnSymbolE7
   <8-9>  MnSymbolE8
   <9-10> MnSymbolE9 
  <10-12> MnSymbolE10
  <12->   MnSymbolE12
}{}
\DeclareFontShape{OMX}{MnSymbolE}{b}{n}{
    <-6>  MnSymbolE-Bold5
   <6-7>  MnSymbolE-Bold6
   <7-8>  MnSymbolE-Bold7
   <8-9>  MnSymbolE-Bold8
   <9-10> MnSymbolE-Bold9
  <10-12> MnSymbolE-Bold10
  <12->   MnSymbolE-Bold12
}{}
\let\llangle\@undefined
\let\rrangle\@undefined
\DeclareMathDelimiter{\llangle}{\mathopen}%
                     {MnLargeSymbols}{'164}{MnLargeSymbols}{'164}
\DeclareMathDelimiter{\rrangle}{\mathclose}%
                     {MnLargeSymbols}{'171}{MnLargeSymbols}{'171}
\newcommand{\vertiii}[1]{{\left\vert\kern-0.25ex\left\vert\kern-0.25ex\left\vert #1 
    \right\vert\kern-0.25ex\right\vert\kern-0.25ex\right\vert}} 
\begin{document} 

\title[Graded tilting for gauged Landau-Ginzburg models]{Graded tilting for gauged Landau-Ginzburg models and geometric applications}
\author{Christian Okonek \and Andrei Teleman}
\address{Christian Okonek: Institut für Mathematik, Universität Zürich,
Winterthurerstrasse 190, CH-8057 Zürich, Switzerland,  e-mail: okonek@math.uzh.ch}
\address{Andrei Teleman: Aix Marseille Université, CNRS, Centrale Marseille, I2M, UMR 7373, 13453 Marseille, France, email: andrei.teleman@univ-amu.fr}

\begin{abstract}

In this paper we develop a graded tilting theory for gauged Lan\-dau-Ginzburg models of regular sections in vector bundles over projective varieties. Our main theoretical result describes -- under certain conditions -- the bounded derived category of the zero locus $Z(s)$ of such a section $s$ as a   graded singularity category of a non-commutative quotient algebra $\Lambda/\langle s\rangle: D^b(\coh Z(s))\simeq D^{\gr}_\mathrm{sg}(\Lambda/\langle s\rangle)$. Our geometric applications all come from homogeneous gauged linear sigma models. In this case $\Lambda$ is a graded non-commutative resolution of the invariant ring which defines the $\C^*$-equivariant affine GIT quotient of the model. 

We obtain  algebraic descriptions of the derived categories of the following families of varieties: 
\begin{enumerate}[1.]
\item Complete intersections.
\item Isotropic symplectic and orthogonal Grassmannians.	
 \item Beauville-Donagi IHS 4-folds.
\end{enumerate}

\end{abstract}

\maketitle

\tableofcontents

\section{Introduction}

\subsection{Motivation}

The bounded derived category of coherent sheaves $D^b(\coh Z)$ of a smooth projective variety $Z$ is one of the most important invariants. It determines the variety up to isomorphism when the (anti-)canonical line bundle is ample  \cite{BO}. It is expected to play a fundamental role in the minimal model program and in connection with the homological mirror symmetry conjecture \cite{Or2}. There are several methods available for describing $D^b(\coh Z)$, some of which can (or are expected to) work only for special classes. The first method uses  full exceptional collections (conjectured by Orlov to work only for rational varieties), or more generally non-trivial semi-orthogonal decompositions (which do not exist when the canonical line bundle is trivial \cite{KO}). There is homological projective duality, which is a very powerful method to construct  semi-orthogonal decompositions, but where examples are hard to find \cite{Ku}. Then we have variation of GIT quotients, which works for a variety $Z$ which can be identified with a GIT quotient $U\catqot_{\chi_+}G$, but needs restrictive symmetry properties of the Kempf-Ness stratifications associated with two linearizations $\chi_\pm$ of the $G$-action on $U$ \cite{BFK}, \cite{HLS}. On the other hand equivalences between derived categories of smooth projective varieties can -- at least in principle --  always be described by suitable Fourier-Mukai functors \cite{Or3}. Finally there is  geometric tilting theory \cite{HVdB}, the method we will use in this paper.

Many interesting varieties are defined as the zero locus $Z(s)$ of a section $s\in H^0(B,E^\smvee)$, where $B$ is a smooth projective variety, $E^\smvee\to B$ is a globally generated vector bundle, and $s$ is general.

 We will develop a graded geometric tilting theory applied to the gauged Landau-Ginzburg model $s^\smvee:E\to \C$, where $E$ and $\C$ are endowed with the natural $\C^*$-actions. Here $s^\smvee$ is the potential associated with the section $s$,   defined by $s^\smvee(e)=\langle s(b), e\rangle$ for $e\in E_b$.

The starting point of this approach is the Isik-Shipman theorem, which gives equivalences
$$D^b(\coh(Z(s))\simeq D^\gr_{\mathrm{sg}} (Z(s^\smvee))\simeq D(\coh_{\C^*}E,\theta,s^\smvee).
$$
Here $ D^\gr_{\mathrm{sg}} (Z(s^\smvee))$ stands for the graded singularity category of the zero locus $Z(s^\smvee)$ of the potential, $\theta$ is the character $\id_{\C^*}$,   and $D(\coh_{\C^*}E,\theta,s^\smvee)$ denotes the derived factorization category associated with the 4-tuple $(\C^*,E,\theta,s^\smvee)$ \cite{Hi1}.

\subsection{Results} Let $B$ be a smooth projective variety, $H$ a finite dimensional complex vector space, and $E$ a subbundle of the trivial vector bundle $B\times H^\smvee$.  Under these assumptions $E$   fits in a commutative diagram
\begin{equation}\label{DiagE}
\begin{tikzcd}
E  \ar[d, two heads, "\rho" '] \ar[r, hook] \ar[dr, "\vartheta"] & B\times H^\smvee  \ar[d, two heads]\\
C(E) \ar[r, hook] & H^\smvee	
\end{tikzcd}
\end{equation}
where $\vartheta$ is projective, $C(E)\coloneqq\im(\vartheta)$ is a subvariety of the affine space $H^\smvee$, and $\rho$ is induced by $\vartheta$. $C(E)$ coincides with the affine cone of the projective variety $S(E)\coloneqq \im(\P(\vartheta))\subset\P(H^\smvee).$ We will assume
 \vspace{2mm}\\
(A1) ${\cal T}_0$ is a locally free sheaf generating $D(\Qcoh (B))$, and ${\cal T}\coloneqq \pi^*({\cal T}_0)$ is a tilting sheaf on $E$.
  \vspace{2mm}

Define 
$$
\Lambda\coloneqq  \End_E({\cal T}),\ R\coloneqq \C[C(E)].
$$
Then
\begin{enumerate}[(i)]
\item $\Lambda$ is a graded Noetherian $\C$-algebra of finite graded global dimension, and the graded tilting functor
$$R\Hom^\gr_{\Qcoh_{\C^*} E} ({\cal T},-):  D(\Qcoh_{\C^*} E)\to D(\Mod_\Z\Lambda)
$$
is an equivalence, which restricts to an equivalence
$$D^b(\coh_{\C^*}E)\textmap{\simeq} D^b(\mod_\Z\Lambda).
$$
\item A regular section $s\in H^0(B,E^\smvee)$ defines a central element $s\in \Lambda$, and ${\cal T}$ defines an equivalence
$${\cal T}_*: D^\gr_\mathrm{sg}(Z(s^\smvee))\textmap{\simeq} D^\gr_\mathrm{sg}(\Lambda / s\Lambda).
$$
Combining this result with the Isik-Shipman theorem, one obtains an equivalence
$$D^b(\coh(Z(s))\textmap{\simeq} D^\gr_\mathrm{sg}(\Lambda / s\Lambda)
$$
which gives a tilting description of the  category $D^b(\coh(Z(s))$.
\item If we also assume 
\vspace{2mm}\\ 
\hspace*{-10mm}
(A2) $B$ is connected, the cone $C(E)\subset H^\smvee	$ is normal, and  $\rho:E\to C(E)$ is birational with $\mathrm{codim}_E\big(\rho^{-1}(\mathrm{Exc}(\rho))\big)\geq 2$, 
\vspace{2mm}\\ 
then $R=\bigoplus_{m\geq 0} H^0(B,S^m {\cal E}^\smvee) $, and  $\Lambda\supset R$ is a graded non-commutative resolution of $R$. In several cases   this resolution is crepant.	
\end{enumerate}
\vspace{2mm} 

The proof of part (i) is   non-trivial: it uses a generalized Beilinson lemma (Lemma \ref{Schwede}), and an infinite family of generators for $D^b(\coh_{\C^*} E)$ (Theorem \ref{prop-old-l2-2}).  Part (ii) is stated in Theorem 
\ref{FourthEq}, which  can be understood as a Baranovsky-Pecharich-type result \cite{BaPe} in the context of tilting theory. The proof   uses the Isik-Shipman theorem and  ideas of \cite[Theorem 5.1]{BDFIK} to identify $D^b(\coh(Z(s))$ with a homotopy category of graded matrix factorizations $K(\mathrm{proj}_\Z\Lambda,\langle 1\rangle,s)$. In a second step we prove a version of Orlov's comparison theorem \cite[Theorem 3.10]{Or} to identify $K(\mathrm{proj}_\Z\Lambda,\langle 1\rangle,s)$ with the triangulated graded singularity category $D^\mathrm{gr}_\mathrm{sg}(\Lambda/\langle s\rangle)$. Our version of the comparison theorem is necessary since we have to deal with algebras $\Lambda$ which are not connected.
\vspace{2mm}

We will construct and study a large class of   diagrams (\ref{DiagE}) satisfying   (A1)-(A2) using geometric objects which we call (inspired by the terminology of the physicists)  homogeneous algebraic geometric GLSM presentations. In our formalism a  GLSM presentation of $(E\stackrel{\pi}{\to}B,s)$ is a 4-tuple $(G,U,F,\chi)$ where  $G$ is a reductive Lie group, $U$, $F$ are finite dimensional $G$-representations, and $\chi:G\to \C^*$ is a character, such that the following conditions are satisfied (see Definition \ref{GLSMDef}):
\begin{enumerate} 
\item The stable locus $U_\st^\chi$ coincides with the semistable locus $U_{\ss}^\chi$,  and  $G$ acts freely	 on $U_{\ss}^\chi$.
\item The base $B$ coincides with the quotient $U_{\ss}^\chi/G$, and the vector bundle $E$ is the   $F$-bundle associated with the principal $G$-bundle $p:U_{\ss}^\chi\to  B$ and the representation  space $F$.
\item   The $G$-equivariant map $\hat s:U_{\ss}^\chi\to F^\smvee$ corresponding to $s$ extends to a $G$-equivariant  polynomial map  $\sigma:U\to F^\smvee$.
\item  Any optimal destabilizing 1-parameter subgroup $\xi$ of an unstable  point $u\in U^\chi_\us$ acts with non-negative weights on $F$.
\end{enumerate}

In this situation a section $s\in H^0(B,E^\smvee)$ is induced by a covariant $\sigma: U\to F^\smvee$. The fourth condition implies $ U^\chi_\ss\times F=(U\times F)^\chi_\ss$, so the bundle $E=(U^\chi_\ss\times F)/G$  is the GIT quotient associated with  the $G$-representation $W\coloneqq U\oplus F$  and the character $\chi$.

A GLSM presentation is homogeneous if there is right action $U\times {\cal G}\to U$ by linear isomorphisms of a reductive group  ${\cal G}$ which commutes with the fixed $G$-action, such that the induced ${\cal G}$-action  on $B$ is transitive and, for a point $u_0\in U^\chi_\ss$,  the obvious morphism ${\cal G}_{[u_0]}\to G$ is surjective (see section \ref{HomGLSMSect}). If this holds,  the map
$$\rho: E\to C(E)
$$ 
is a Kempf collapsing, hence $C(E)$ is a normal Cohen-Macaulay variety. The map $\rho$ is birational iff $\dim(E)=\dim(C(E))$, and if this is the case,  $C(E)$  has rational singularities.

\subsection{Applications}

Consider the $G$ representation space $W\coloneqq U\oplus F$ associated with a homogeneous GLSM.  Then 
\begin{enumerate}
\item $S(E)$ is projectively normal and arithmetically Cohen-Macaulay. If $\rho$ is birational, its affine cone $C(E)$ has rational singularities (see Corollary \ref{PropertiesS(E)} (i)).
\item Suppose that $\mathrm{codim}(U_\us^\chi)\geq 2$ and $\rho$ is birational. Then the cone $C(E)$ is naturally identified with the GIT quotient $\Spec(\C[W]^G)$ (see Corollary \ref{PropertiesC(E)} (i)).
\end{enumerate}

Moreover, we give explicit criteria (see Lemma \ref{GorCritLm}) for testing, for a given homogeneous GLSM presentation, if the cone $C(E)$  (the projective variety $S(E)$) is (arithmetically) Gorenstein, and we check these criteria in several situations.

Next we identify an important class of GLSM presentations  to which our general results apply. This is the class of 4-tuples $(G,U,F,\chi)$ with $G=\GL(Z)$ for a $k$-dimensional complex vector space $Z$, $U=\Hom(V,Z)$ with $V$ a complex vector space of dimension $N$ and $F^\smvee$ a finite dimensional polynomial representation of $\GL(Z)$. The character $\chi$ is chosen such that $U^\chi_\ss/\GL(Z)$ becomes the Grassmannian $\Gr_k(V^\smvee)$. In many cases it is possible to go one step further, and to give a purely algebraic description of the quotient $\Lambda/ s\Lambda$ in terms of the initial data  $(V,k,\lambda,\sigma)$. In order to do this, we need section 1.5 to get the identification $\Lambda=\End_R(M)$ (see section \ref{sec:2-4-2}).

   
   Then we identify $R$ with a quotient $S^\bullet(S^\lambda V)/I_k$ following Porras \cite{Po}, and we describe  $M$ as the image of a morphism between free graded $R$-modules (see Proposition \ref{prop:2-13}).   When certain conditions are satisfied, our final result takes the following form (Theorem  \ref{th:2-14}):
$$D^b(\coh Z(s))\simeq D^{\mathrm{gr}}_{\mathrm{sg}}\big(\End_{S/I_k}\big(\bigoplus_{\alpha\in P(k,n-k)}\mathrm{Im}(S^\alpha(\varphi^\smvee)\otimes (S/I_k))\big)\big/\langle\bar\sigma\rangle\big).
$$
This gives a purely algebraic description of $D^b(\coh Z(s))$ in terms of the initial data $(V,k,\lambda,\sigma)$.

\subsection{Examples}

We apply our general formalism and our results to the gauged Landau-Ginzburg models associated with the following varieties (described as zero loci of regular sections):
\begin{enumerate}[1.]
\item 	Complete intersections. In this case $B=\P(V^\smvee)$ (for a complex vector space of dimension $N>1$), $E$ is the total space of the direct sum $\bigoplus_{i=1}^r {\cal O}(-d_i)$ (where $d_i>0$) and $s$ is a general element in 
$$H^0(\bigoplus_{i=1}^r {\cal O}_{V^\smvee}(d_i))=\bigoplus_{i=1}^r S^{d_i}V.$$
\item Isotropic Grassmannians.

Let $V$ be a complex vector space of even dimension $N=2n$, and $k$ a positive even integer with $k\leq n$. Let $\omega\in \extp^2V$ be a symplectic form on $V^\smvee$. The isotropic Grassmannian $\Gr_k^\omega(V^\smvee)\subset \Gr_k(V^\smvee)$ is the submanifold of $k$-dimensional isotropic subspaces of $(V^\smvee,\omega)$. 
Denoting by $T$ the tautological $k$-bundle of $\Gr_k(V^\smvee)$, the form $\omega$ defines a section $s_\omega\in \Gamma(\Gr_k(V^\smvee),\extp^2 T^\smvee)$ which is transversal to the zero section, and whose zero locus is  $\Gr_k^\omega(V^\smvee)$.  %

Similarly, let $q\in S^2V$ be a non-degenerate quadratic form on $V^\smvee$, and choose $k \leq N/2$. The isotropic Grassmannian $\Gr_k^q(V^\smvee)\subset \Gr_k(V^\smvee)$ is the submanifold of $k$-dimensional isotropic subspaces of $(V^\smvee,q)$. 
The form $q$ defines a section $s_q\in \Gamma(\Gr_k(V^\smvee),S^2 T^\smvee)$ which is transversal to the zero section, and whose zero locus is  $\Gr_k^q(V^\smvee)$. 
\vspace{2mm}

\item  Beauville-Donagi IHS 4-folds.

With the same notation as above we take $E=S^3 T$ on the Grassmannian $\Gr_2(V^\smvee)$. The Beauville-Donagi IHS 4-folds are obtained in the special case  $\dim(V)=6$.
\end{enumerate}
\vspace{3mm} 

In all these cases the tilting bundle ${\cal T}_0$ is the direct sum of the sheaves of a full strongly exceptional collection.  In the first case  we choose the standard Beilinson collection, and in the other cases (when the basis is a Grassmannian) we use the Kapranov collection.

\subsection{Acknowledgments}

This article  builds on and combines fundamental contributions of many mathematicians, notably of Ballard - Favero - Deliu - Isik - Katzarkov \cite{BDFIK}, Bondal - Orlov \cite{BO}, Buchweitz - Leuschke - Van den Bergh  \cite{BLVdB2}, Kempf \cite{Ke}, and Orlov \cite{Or}.

We are very grateful to Alexei Bondal for his important remarks at the beginning of this project. We also thank Andrew Kresch   for his interest and pointers to the literature, and Greg Stevenson for a useful e-mail exchange.

\newpage

\section{Graded tilting for gauged Landau-Ginzburg  models}\label{AlgSection}

\subsection{The Landau-Ginzburg model of a section}\label{LGmodels}

Let $B$ be a smooth complex variety, $\pi:E\to B$  a  rank $r$ vector bundle on $B$, and $s\in\Gamma(E^\smvee)$  a  section in its dual. The zero locus $Z(s)$  is a local complete intersection of codimension $r$ when $s$ is regular. 
If, moreover, $s$ is transversal to  the zero section then $Z(s)$ is a smooth submanifold of codimension $r$.

\begin{dt} Let $s\in\Gamma(E^\smvee)$ be a  section. The potential associated with $s$ is the map $s^\smvee:E\to\C$ defined by
$$s^\smvee(y)\coloneqq \langle s(x),y\rangle\ ,\ \forall x\in B, \forall y\in E_x.
$$
\end{dt}

\def\Crit{\mathrm{Crit}}
Let $x\in B$ and $y\in E_x$. For a suitable open neighborhood $U$ of $x$ identify  the bundles $E_U$, $E^\smvee_U$  with $U\times\C^r$,  $U\times{\C^r}^\smvee$ respectively using mutually dual  trivializations $\theta$, $\theta^\smvee$. Denote by $s_\theta:U\to{\C^r}^\smvee$ the map corresponding to $s$ via $\theta^\smvee$. For any   $y\in E_x$ and a tangent vector  $(\dot x,\dot z)\in T_y(E)=T_x(B)\times{\C^r}$ one has
$$d_y s^\smvee(\dot x,\dot z)=\langle s_\theta(x),\dot z\rangle+ \langle d_xs_{\theta}(\dot x),y\rangle.
$$

This formula shows that $\Crit(s^\smvee)\subset p^{-1}(Z(s))$. When  $x\in Z(s)$,  the differential of $s^\smvee$ at a point $y\in E_x$ can be written in an invariant way:
$$d_y s^\smvee (\dot y)=\langle D_x s(p_*(\dot y)),   y\rangle \ \forall \dot y\in T_y(E), 
$$
where $D_x s:T_x B\to E_x^\smvee$ stands for the intrinsic derivative of $s$ at $x$. This shows that the critical locus $\Crit(s^\smvee)$ of $s^\smvee$ is
\begin{equation}\label{crit}
\Crit(s^\smvee)=\union_{x\in Z(s)}  \bigg\{\qmod{E_x^\smvee}{\im(D_xs)}\bigg\}^\smvee.
\end{equation}
 In particular
\begin{re}\label{transv}
If $s$ is a transversal to the zero section, then $\Crit(s^\smvee)$ coincides as a variety with the image of $Z(s)$ via the zero section $\oo:B\to E$, so it can be identified with $Z(s)$ via $\pi$.
\end{re}
We refer to \cite{Or2} and \cite{Hi2} for the following fundamental definition:
\begin{dt} A gauged Landau-Ginzburg model is a 4-tuple $(G,X,\kappa,w)$, where $G$ is  an algebraic group, $\kappa\in\Hom(G,\C^*)$ is a character, $X$  is a  smooth $G$-variety, and $w:X\to \C$ is a $\kappa$-equivariant regular function on $X$, called the potential of the model.   

 Let  $\pi:E\to B$ be a  rank $r$ vector bundle on $B$, and $s\in\Gamma(E^\smvee)$.
The 4-tuple $(E,\C^*,\id_{\C^*}, s^\smvee)$, where $E$ is endowed  with the fibrewise scaling $\C^*$-action,  will be called the gauged Landau-Ginzburg model associated with  $(E\to B,s)$. 
\end{dt}

This class of gauged Landau-Ginzburg models will play a fundamental role in  this article.



\subsection{\texorpdfstring{$\C^*$}{str1}-equivariant derived categories of vector bundles}

Let $B$ be a smooth projective scheme, $H$ a finite dimensional $k$-vector space, and  let
$$
\begin{tikzcd}
E \ar[r, hook] \ar[dr, two heads, "\pi"'] &	B\times H^\smvee\ar[d, two heads]\\
 & B
\end{tikzcd}
$$
be a sub-vector bundle of the trivial vector bundle $B\times H^\smvee$. This implies that $E$ is projective over $H^\smvee$, in particular it belongs to the class of varieties concerned by geometric tilting (see \cite[Theorem 7.6]{HVdB}, \cite[sections 1.8, 1.9]{BH}).
%

\begin{pr} \label{prop-old-l2-1} Let ${\cal T}_0$ be a coherent sheaf on $B$ which generates $D(\Qcoh B)$. Then ${\cal T}\coloneqq \pi^*({\cal T}_0)$ generates $D(\Qcoh E)$.
 \end{pr}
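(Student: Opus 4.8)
The plan is to exploit the fact that the total space of a vector bundle is affine over its base. Concretely, $\pi:E\to B$ is the relative spectrum $\Spec_B(S^\bullet{\cal E}^\smvee)$ of the (locally free, hence quasi-coherent) sheaf of symmetric algebras $S^\bullet{\cal E}^\smvee$, so $\pi$ is an affine morphism, in particular flat and quasi-compact separated between Noetherian schemes. Two consequences will drive the argument. First, since $\pi$ is flat, the derived pullback coincides with the ordinary one, and ${\cal T}=\pi^*{\cal T}_0$ is an honest coherent sheaf. Second, since $\pi$ is affine, the derived pushforward $R\pi_*$ is computed by the underived $\pi_*$, which is exact and faithful on quasi-coherent sheaves; indeed $\pi_*$ is just the forgetful functor from $S^\bullet{\cal E}^\smvee$-modules to ${\cal O}_B$-modules.

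I would read ``generates $D(\Qcoh E)$'' through right orthogonality: ${\cal T}$ generates exactly when every object $F\in D(\Qcoh E)$ satisfying $R\Hom_E({\cal T},F)=0$ must vanish. So fix such an $F$. The adjunction $(\pi^*,R\pi_*)$ supplies a natural isomorphism $R\Hom_E(\pi^*{\cal T}_0,F)\cong R\Hom_B({\cal T}_0,R\pi_*F)$, and therefore the hypothesis forces $R\Hom_B({\cal T}_0,R\pi_*F)=0$. Because ${\cal T}_0$ generates $D(\Qcoh B)$, this orthogonality immediately yields $R\pi_*F=0$.

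It then remains to descend from $R\pi_*F=0$ to $F=0$, and this is where affineness of $\pi$ is used a second time. Since $\pi_*$ is exact, the cohomology sheaves of $R\pi_*F$ are $\pi_*{\cal H}^i(F)$; hence $R\pi_*F=0$ gives $\pi_*{\cal H}^i(F)=0$ for every $i$. Faithfulness of $\pi_*$ on $\Qcoh E$ now forces each ${\cal H}^i(F)=0$, so $F=0$. This establishes that ${\cal T}=\pi^*{\cal T}_0$ generates $D(\Qcoh E)$.

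I do not anticipate a serious obstacle here; the only points requiring genuine care are bookkeeping at the level of the \emph{unbounded} categories $D(\Qcoh -)$, namely justifying that $R\pi_*$ is represented by the exact functor $\pi_*$ and that the adjunction isomorphism remains valid for possibly unbounded complexes. Both are standard facts for an affine morphism of Noetherian schemes. A secondary matter is simply to confirm that the intended meaning of ``generates'' is the right-orthogonal-detects-zero condition, which is precisely the notion employed in geometric tilting and the one to which the adjunction argument is adapted.
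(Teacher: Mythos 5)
Your argument is correct and follows essentially the same route as the paper's proof: exactness of $\pi_*$ (affineness) and $\pi^*$ (flatness) so that the derived adjunction $R\Hom_E(\pi^*{\cal T}_0,F)\cong R\Hom_B({\cal T}_0,R\pi_*F)$ holds at the level of unbounded derived categories, generation by ${\cal T}_0$ to force $R\pi_*F=0$, and then conservativity of the exact, faithful pushforward of an affine morphism to conclude $F=0$. The paper handles the two points you flag exactly as you suggest, citing that exact functors induce componentwise derived functors and that an affine morphism's pushforward reflects acyclicity.
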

 \begin{proof}
 Since $\pi$ is affine, it follows that the functor
$$\pi_*:\Qcoh E\to \Qcoh B
$$
(\cite[Lemma 25.24.1]{Stack}) is exact.  Its left adjoint functor is
$$\pi^*:\Qcoh B\to \Qcoh E
$$
(\cite[Lemmas 6.26.2, 17.10.4]{Stack}), and this functor is also exact because $\pi$ is flat (\cite[Lemma 28.11.6]{Stack}). Since the functors $\pi_*$, $\pi^*$ are exact, they induce well defined functors
$$D(\pi_*): D(\Qcoh E)\to D(\Qcoh B),\ D(\pi^*):D(\Qcoh B)\to D(\Qcoh E)
$$
which act on complexes componentwise, and are  right and left derived functors of $\pi_*$, respectively  $\pi^*$ \cite[p. 75]{BDG}.  Using  Lemma \cite[Lemma 13.28.5]{Stack} it follows that $D(\pi_*)$ is a right adjoint for $D(\pi^*)$. Therefore for any two objects ${\cal M}$, ${\cal N}$ in $D(\Qcoh E)$,  $D(\Qcoh B)$ respectively  one has an identification
\begin{equation}\label{adj}
\Hom_{D(\Qcoh E)}(D(\pi^*)({\cal M}),{\cal N})=\Hom_{D(\Qcoh B)}({\cal M},D(\pi_*)({\cal N})). 	
\end{equation}
 Let now ${\cal N}$ be a complex of quasi-coherent sheaves on $E$ such that
$$\Hom_{D(\Qcoh E)}({\cal T}, {\cal N}[m])=0\ \forall m\in\Z.
$$
By (\ref{adj}) we obtain
$$\Hom_{D(\Qcoh B)}({\cal T}_0,D(\pi_*)({\cal N})[m])=0 \ \forall m\in\Z,
$$
so that $D(\pi_*)({\cal N})=0$ in $D(\Qcoh B)$, because ${\cal T}_0$ generates $D(\Qcoh B)$. Therefore $D(\pi_*)({\cal N})$ is an acyclic complex.  Using \cite[Lemma 28.11.6]{Stack} it follows that ${\cal N}$ is an an acyclic complex, too, hence ${\cal N}=0$ in $D(\Qcoh E)$.	
 \end{proof}

 \begin{co}\label{co3}
Let ${\cal T}_0$ be a locally free coherent sheaf on $B$ which classically generates $D^b(\coh B)$. Then 
\begin{enumerate}
\item ${\cal T}_0$ generates $D(\Qcoh B)$. 	
\item 	$\pi^*({\cal T}_0)$  classically generates $D^b(\coh E)$.
\item The pull-back $\pi^*({\cal T}_0)$ is a tilting object of $D(\Qcoh  E)$ if and only if  
$$H^i\big(B,{\cal T}_0^\smvee\otimes {\cal T}_0\otimes (\bigoplus_{m\geq 0} S^m {\cal E}^\smvee)\big)=0 \ \forall i>0. 
$$

\end{enumerate}
	
\end{co}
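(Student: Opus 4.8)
The plan is to prove the three statements by successively reducing to known facts: the equivalence of classical and ordinary generation for locally free sheaves, the pullback result of Proposition \ref{prop-old-l2-1}, and a cohomology-vanishing criterion for the tilting condition.

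\medskip

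\textbf{Part (1).} First I would show that a locally free sheaf ${\cal T}_0$ which classically generates $D^b(\coh B)$ also generates $D(\Qcoh B)$ in the sense of Proposition \ref{prop-old-l2-1}. The point is to verify that if ${\cal N}\in D(\Qcoh B)$ satisfies $\Hom_{D(\Qcoh B)}({\cal T}_0,{\cal N}[m])=0$ for all $m$, then ${\cal N}=0$. Since ${\cal T}_0$ classically generates $D^b(\coh B)$ --- i.e.\ the smallest thick subcategory containing ${\cal T}_0$ is all of $D^b(\coh B)$ --- the orthogonality hypothesis propagates (by the two-out-of-three property of triangles and closure under shifts and direct summands) to $\Hom_{D(\Qcoh B)}({\cal F},{\cal N}[m])=0$ for \emph{every} ${\cal F}\in D^b(\coh B)$, in particular for all coherent sheaves placed in any degree. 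On a Noetherian scheme every object of $D(\Qcoh B)$ that is right-orthogonal to all of $D^b(\coh B)$ must vanish (this uses that $B$ is Noetherian, so $\Qcoh B$ is locally Noetherian and coherent sheaves generate); hence ${\cal N}=0$. I expect this step to be essentially formal once the classical-generation hypothesis is unwound.

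\medskip

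\textbf{Part (2).} This is the easiest step: combine part (1) with Proposition \ref{prop-old-l2-1}. By (1), ${\cal T}_0$ generates $D(\Qcoh B)$, so Proposition \ref{prop-old-l2-1} immediately gives that $\pi^*({\cal T}_0)$ generates $D(\Qcoh E)$. To upgrade from generation of $D(\Qcoh E)$ to \emph{classical} generation of $D^b(\coh E)$, I would invoke that $\pi^*({\cal T}_0)$ is perfect (being the pullback of a locally free sheaf) together with the standard fact that on a Noetherian scheme a perfect generator of $D(\Qcoh E)$ classically generates the subcategory $D^b(\coh E)$; alternatively one transports the thick-subcategory structure through the exact pullback $D(\pi^*)$, using that $\pi^*$ sends a classical generating set on $B$ to one on $E$ fibrewise. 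The affineness and flatness of $\pi$ established in the proof of Proposition \ref{prop-old-l2-1} are what make this transport clean.

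\medskip

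\textbf{Part (3).} For the tilting criterion I would use that $\pi^*({\cal T}_0)$ is already a generator (by part (2)), so being a tilting object reduces to the self-$\Ext$ vanishing condition $\Ext^i_E(\pi^*{\cal T}_0,\pi^*{\cal T}_0)=0$ for all $i>0$. The key computation is to express these higher $\Ext$ groups on $E$ in terms of cohomology on $B$. Since $\pi$ is affine, $R^i\pi_* =0$ for $i>0$, and $\pi_*{\cal O}_E=\bigoplus_{m\geq 0}S^m{\cal E}^\smvee$ (the relative spectrum description of the total space $E$, whose fibrewise functions are the symmetric algebra on ${\cal E}^\smvee$). The projection formula then yields
\begin{equation*}
\Ext^i_E\big(\pi^*{\cal T}_0,\pi^*{\cal T}_0\big)\cong H^i\big(B,{\cal T}_0^\smvee\otimes{\cal T}_0\otimes\textstyle\bigoplus_{m\geq 0}S^m{\cal E}^\smvee\big),
\end{equation*}
where I have used the adjunction between $\pi^*$ and $R\pi_*$ together with $\pi^*{\cal T}_0^\smvee\otimes\pi^*{\cal T}_0 = \pi^*({\cal T}_0^\smvee\otimes{\cal T}_0)$. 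Vanishing of the right-hand side for all $i>0$ is then equivalent to the tilting condition. The main subtlety --- and the step I would treat most carefully --- is convergence: the sum over $m$ is infinite, so I must justify that Ext commutes with the infinite direct sum $\bigoplus_m S^m{\cal E}^\smvee$. This is where the affineness of $\pi$ (so that $\pi_*$, and hence the pushforward of the locally free ${\cal T}_0^\smvee\otimes{\cal T}_0$, genuinely distributes over the direct sum) together with the Noetherian hypothesis on $B$ is essential, since on a quasi-compact quasi-separated scheme cohomology commutes with filtered colimits of quasi-coherent sheaves.
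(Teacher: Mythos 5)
Your proposal is correct and takes essentially the same route as the paper: part (1) is the Ravenel--Neeman generation theorem, which you unwind by hand; part (2) combines part (1) with Proposition \ref{prop-old-l2-1} and the fact that a perfect generator classically generates the compact objects; and part (3) is exactly the paper's computation $\Ext^i_E(\pi^*{\cal T}_0,\pi^*{\cal T}_0)\cong H^i\big(B,{\cal T}_0^\smvee\otimes{\cal T}_0\otimes\bigoplus_{m\geq 0}S^m{\cal E}^\smvee\big)$ via the affine morphism $\pi$ and $\pi_*{\cal O}_E=\bigoplus_{m\geq 0}S^m{\cal E}^\smvee$, with the correct remark that cohomology commutes with the infinite direct sum.

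Two justifications need patching, however. In part (1), your concluding step --- that an object of $D(\Qcoh B)$ right-orthogonal to all of $D^b(\coh B)$ in every shift must vanish --- does not follow formally from ``coherent sheaves generate $\Qcoh B$'' at the abelian level: for unbounded complexes the passage from abelian generation to derived generation is not automatic (the relevant hypercohomology spectral sequence need not converge). The true reason is the compact generation of $D(\Qcoh B)$ by perfect complexes (Bondal--Van den Bergh/Neeman), i.e.\ precisely the Ravenel--Neeman input the paper cites, combined with smoothness of $B$ to know that the perfect complexes lie in (indeed coincide with) $D^b(\coh B)$. Relatedly, in part (2) your ``standard fact'' is stated for Noetherian schemes, but Noetherianness only yields that a perfect generator classically generates $D(\Qcoh E)^c=\Perf(E)$; it is the \emph{smoothness} of $E$ (automatic here, as the total space of a bundle over smooth $B$) that identifies $\Perf(E)$ with $D^b(\coh E)$. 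Your alternative sketch for (2) --- transporting thick subcategories through $D(\pi^*)$ ``fibrewise'' --- would not work as stated, since $\pi^*$ is far from essentially surjective on $D^b(\coh E)$; the compact-generation argument is the one to keep. With these citation-level corrections your argument coincides with the paper's proof.
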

\begin{proof} (1) $D(\Qcoh B)$ is compactly generated, and $D(\Qcoh B)^c$ is equivalent to $D^b(\coh B)$ because $B$ is smooth. The claim follows from Ravenel-Neeman's  Theorem (see \cite[Theorem 2.1.2]{BVdB}, \cite[section 1.4]{BH}). 
\\ \\
(2)
The sheaf $\pi^*({\cal T}_0)$ 	generates $D(\Qcoh E)$. Since the composition
$$ E\hookrightarrow B\times H^\smvee \to H^\smvee  
$$
is a projective morphism, $D(\Qcoh E)$ is compactly generated. But  $\pi^*({\cal T}_0)$  is an object in $D_\Perf(\Qcoh E)=D(\Qcoh E)^c$ which generates $D(\Qcoh E)$, hence it classically generates $D(\Qcoh E)^c\simeq D^b(\coh E)$ because $E$ is smooth. 
\\ \\
(3) ${\cal T}$ is a compact generator of	$D(\Qcoh E)$, and 
$$\Ext^i({\cal T},{\cal T})=H^i(E,\pi^*({\cal T}_0^\smvee\otimes {\cal T}_0))=H^i(B,{\cal T}_0^\smvee\otimes {\cal T}_0\otimes \pi_*({\cal O}_E))$$
$$=H^i\big(B,{\cal T}_0^\smvee\otimes {\cal T}_0\otimes (\bigoplus_{m\geq 0} S^m {\cal E}^\smvee)\big)=0\ \forall i>0. 
$$
\end{proof}
\begin{lm}\label{newLm}
The canonical functor $D^b(\coh_{\C^*} E)\to 	D(\Qcoh_{\C^*} E)$ defines an equivalence between $D^b(\coh_{\C^*} E)$ and the full subcategory of $D(\Qcoh_{\C^*} E)$ of complexes with bounded, coherent cohomology.
\end{lm}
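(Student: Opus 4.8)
The plan is to recognize this as the graded, equivariant instance of the standard identification $D^b(\mathcal{A}_0)\simeq D^b_{\mathcal{A}_0}(\mathcal{A})$ of the bounded derived category of a Noetherian abelian category $\mathcal{A}_0$ with the full subcategory of $D(\mathcal{A})$ of complexes having bounded cohomology lying in $\mathcal{A}_0$, where $\mathcal{A}=\Qcoh_{\C^*}E$ and $\mathcal{A}_0=\coh_{\C^*}E$. First I would record the structural facts that make the argument run. Since $E$ is a Noetherian scheme and $\C^*$-equivariant (quasi-)coherent sheaves coincide with $\Z$-graded (quasi-)coherent sheaves, $\Qcoh_{\C^*}E$ is a locally Noetherian Grothendieck abelian category; its full subcategory of Noetherian objects is exactly $\coh_{\C^*}E$; the latter is a Serre subcategory, i.e. closed under equivariant subobjects, quotients and extensions; and every equivariant quasi-coherent sheaf is the filtered colimit (directed union) of its coherent equivariant subsheaves, a graded quasi-coherent sheaf being the union of its finitely generated graded subsheaves. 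The canonical functor in the statement is the restriction to $D^b$ of the functor $D(\coh_{\C^*}E)\to D(\Qcoh_{\C^*}E)$ induced by inclusion, and it visibly lands in the subcategory of complexes with bounded coherent cohomology; it then remains to prove essential surjectivity onto, and full faithfulness into, that subcategory.

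For essential surjectivity I would argue by approximation with coherent subsheaves. Given $K\in D(\Qcoh_{\C^*}E)$ with coherent cohomology concentrated in degrees $[m,n]$, the good truncation $\tau_{\geq m}\tau_{\leq n}K$ lets me assume $K$ is a bounded complex of equivariant quasi-coherent sheaves. Resolving downward from the top degree, and using that coherent equivariant subsheaves are cofinal (so that at each stage one can choose a coherent subsheaf surjecting onto the relevant cohomology and onto the previously constructed syzygy), I would produce a bounded-above complex of coherent equivariant sheaves together with a quasi-isomorphism to $K$. Applying the good truncation $\tau_{\geq m}$, whose bottom term is a coherent quotient, then yields a bounded complex in $\coh_{\C^*}E$ quasi-isomorphic to $K$. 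This is the usual resolution-by-coherent-subsheaves argument, and it goes through verbatim in the graded setting since all the sheaves, subsheaves, cokernels and kernels involved inherit the grading.

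Full faithfulness is the step I expect to be the real obstacle, because a morphism in $D(\Qcoh_{\C^*}E)$ between two bounded coherent complexes $F,G$ is a priori represented by a roof $F\xleftarrow{\ \sim\ }P\to G$ whose apex $P$ is merely a complex of quasi-coherent sheaves. The point is that $P$ is quasi-isomorphic to $F$, hence has bounded coherent cohomology, so the essential-surjectivity construction applies to $P$; the actual work is to carry it out compatibly with both legs of the roof, i.e. to replace $P$ by a coherent complex $P'$ equipped with a quasi-isomorphism to $P$, thereby exhibiting the given morphism as a coherent roof and proving surjectivity of the comparison map on $\Hom$-groups. Injectivity is handled symmetrically: a coherent roof that becomes zero in $D(\Qcoh_{\C^*}E)$ is annihilated after composing with a quasi-coherent quasi-isomorphism, which one refines to a coherent one using the directed-union property, so that it already vanishes in $D^b(\coh_{\C^*}E)$. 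Once these compatible approximation statements are in place the equivalence follows; in the non-equivariant case this is precisely the standard comparison $D^b(\coh X)\simeq D^b_{\coh}(\Qcoh X)$ for a Noetherian scheme $X$, and I would either invoke its general abelian-category form after checking the four structural properties listed above, or reproduce its two-step proof directly in the graded category as just outlined.
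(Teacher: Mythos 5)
Your proposal is correct, but it takes a genuinely different route from the paper. The paper's proof is essentially by citation: it identifies $\Qcoh_{\C^*}E$ with quasi-coherent sheaves on the quotient stack $[E/\C^*]$ via Thomason \cite{Tho}, invokes Arinkin--Bezrukavnikov \cite[Corollary 2.11]{ArBe} for the equivalence $D^b(\coh_{\C^*}E)\simeq D^b_{\coh}(\Qcoh_{\C^*}E)$, and then applies Keller's \cite[Lemma 11.7]{Kell} to embed $D^b(\Qcoh_{\C^*}E)$ into the unbounded category $D(\Qcoh_{\C^*}E)$ as the full subcategory of complexes with bounded cohomology, restricting at the end to coherent cohomology. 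You instead verify directly that $\Qcoh_{\C^*}E$ is a locally Noetherian Grothendieck category whose Noetherian objects are exactly the coherent equivariant sheaves and run the classical approximation argument (or its abstract abelian-category form). Both are valid. The one genuinely equivariant input your approach needs --- that every equivariant quasi-coherent sheaf is the directed union of its coherent equivariant subsheaves --- does hold here: it follows from the weight decomposition that the paper itself establishes in the proof of Theorem \ref{prop-old-l2-2} (which reduces the claim to a statement about graded modules), or from Thomason's equivariant extension results in general; you should state this explicitly rather than leave it at ``goes through verbatim in the graded setting''. Your route buys self-containedness and avoids the stack identification and the machinery of \cite{ArBe}; the paper's route is shorter and, importantly, disposes of the passage from $D^b(\Qcoh_{\C^*}E)$ to the unbounded category --- the step you correctly single out as the delicate one --- by outsourcing it to Keller. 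If you write your argument out in full, the second point to make explicit is the truncation of roofs: a chain map from an apex $P$ to a complex vanishing in degrees $<m$ automatically factors through the good truncation $\tau_{\geq m}P$ (since the differential into degree $m$ is killed), and dually one passes to $\tau_{\leq n}P$ via the natural quasi-isomorphism $\tau_{\leq n}P\to P$; this is what lets you replace the apex of a roof between bounded coherent complexes by a bounded, and then (by your cofinality argument) coherent, one, completing full faithfulness.
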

\def\bdcoh{\mathrm{bd-coh}}
\def\bd{\mathrm{bd}}
\begin{proof}
The category of $\C^*$-equivariant quasi-coherent sheaves on $E$ can be identified with the category of quasi-coherent sheaves on the quotient stack $[E/\C^*]$ \cite{Tho}. Combining this identification with  \cite[Corollary 2.11 p. 10]{ArBe}   we  see  that  the canonical functor 
$$D^b(\coh_{\C^*} E)\to 	D^b(\Qcoh_{\C^*} E)$$
gives an equivalence
$$D^b(\coh_{\C^*} E)\to 	D^b_\coh(\Qcoh_{\C^*} E)
$$
with the full subcategory $D^b_\coh(\Qcoh_{\C^*} E)$ of $	D^b(\Qcoh_{\C^*} E)$ consisting of objects  with coherent cohomology. 

On the other hand, \cite[Lemma 11.7, p. 15]{Kell} gives an equivalence
$$U:D^b(\Qcoh_{\C^*} E)\to D_{\bd}(\Qcoh_{\C^*} E)
$$
 from $D^b(\Qcoh_{\C^*} E)$ to the full subcategory $D_{\bd}(\Qcoh_{\C^*} E)$ of $D(\Qcoh_{\C^*} E)$ consisting of objects with bounded cohomology.
 
 Since cohomology is invariant under isomorphisms in the derived category,  this equivalence restricts to an equivalence $D^b_\coh(\Qcoh_{\C^*} E)\to D_{\bdcoh}(\Qcoh_{\C^*} E)$ between $D^b_\coh(\Qcoh_{\C^*} E)$ and the full subcategory of $D_{\bd}(\Qcoh_{\C^*} E)$ consisting of objects with coherent cohomology:
 $$
 \begin{tikzcd}[column sep=6mm]
 & D_\bdcoh	(\Qcoh_{\C^*}E) \ar[r, hook] & D_\bd	(\Qcoh_{\C^*}E)\ar[r,hook]&D	(\Qcoh_{\C^*}E)\\
 D^b(\coh_{\C^*}E)\ar[r, "\simeq"]  &D_\coh^b	(\Qcoh_{\C^*}E)\ar[r, hook] \ar[u, "U_\coh", "\simeq"' ] & D^b	(\Qcoh_{\C^*}E) \ar[u, "\simeq", "U"']\,.&
 \end{tikzcd}
 $$
\end{proof}

For a locally free coherent sheaf ${\cal F}_0$ we denote by ${\cal F}$ its pull-back to $E$ regarded as $\C^*$-sheaf in the obvious way, and by ${\cal F}\langle k\rangle$ the $\C^*$-sheaf on $E$ obtained from ${\cal F}$ by twisting with the character $z\mapsto z^k$.

\begin{thry} \label{prop-old-l2-2} Let ${\cal T}_0$ be a coherent sheaf on $B$ which generates $D(\Qcoh B)$. Put ${\cal T}\coloneqq  \pi^*({\cal T}_0)$. The family $({\cal T}\langle k\rangle)_{k\in\Z}$ generates $D(\Qcoh_{\C^*} E)$, and classically generates $D^b(\coh_{\C^*} E)$.	
\end{thry}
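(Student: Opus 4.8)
The plan is to reduce both assertions to the non-equivariant Proposition~\ref{prop-old-l2-1} by means of the forgetful functor $\Phi\colon D(\Qcoh_{\C^*} E)\to D(\Qcoh E)$, and then to invoke the compact generation formalism already used in Corollary~\ref{co3}. Recall that, $\pi$ being affine, $\pi_*{\cal O}_E=\bigoplus_{m\geq 0}S^m{\cal E}^\smvee=:{\cal A}$ is a sheaf of $\Z$-graded ${\cal O}_B$-algebras, that $\Qcoh_{\C^*}E$ is the category of graded ${\cal A}$-modules (the $\C^*$-action being the fibrewise scaling), that the twist $\langle k\rangle$ is the grading shift, and that $\Phi$ simply forgets the grading.

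For the first assertion I would argue as follows. The sheaf ${\cal T}=\pi^*({\cal T}_0)$ corresponds to the induced graded module ${\cal T}_0\otimes_{{\cal O}_B}{\cal A}$, which is finitely generated over ${\cal A}$; since $B$ and hence $E$ are smooth, ${\cal T}$ is perfect, so $\Phi({\cal T})={\cal T}$ is a compact object of $D(\Qcoh E)$. The weight decomposition of the ungraded $\Hom$ then yields, for every ${\cal N}\in D(\Qcoh_{\C^*}E)$ and every $m\in\Z$, an isomorphism
\[
\Hom_{D(\Qcoh E)}\big(\Phi({\cal T}),\Phi({\cal N})[m]\big)\;\cong\;\bigoplus_{k\in\Z}\Hom_{D(\Qcoh_{\C^*}E)}\big({\cal T}\langle k\rangle,{\cal N}[m]\big),
\]
the sum being direct precisely because ${\cal T}$ is finitely generated (this is the point where compactness enters, and where one must check that the decomposition survives passage to the functors computing $R\Hom$). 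Granting this, if the right-hand side vanishes for all $k$ and $m$, then $\Hom_{D(\Qcoh E)}(\Phi({\cal T}),\Phi({\cal N})[m])=0$ for all $m$; since $\Phi({\cal T})={\cal T}$ generates $D(\Qcoh E)$ by Proposition~\ref{prop-old-l2-1}, we get $\Phi({\cal N})=0$, and because $\Phi$ is exact and so commutes with taking cohomology sheaves it detects the zero object. Hence ${\cal N}=0$ and the family $({\cal T}\langle k\rangle)_{k}$ generates $D(\Qcoh_{\C^*}E)$.

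For the second assertion I would mimic the proof of Corollary~\ref{co3}. Using the quotient-stack description of Lemma~\ref{newLm}, the category $D(\Qcoh_{\C^*}E)$ is compactly generated, and since $[E/\C^*]$ is smooth its subcategory of compact objects $D_\Perf(\Qcoh_{\C^*}E)$ coincides with $D^b(\coh_{\C^*}E)$. The objects ${\cal T}\langle k\rangle$ are perfect, hence compact, and they generate $D(\Qcoh_{\C^*}E)$ by the first part. By the Ravenel--Neeman theorem (\cite[Theorem 2.1.2]{BVdB}, as in Corollary~\ref{co3}(1)) a set of compact generators classically generates the subcategory of compact objects; therefore $({\cal T}\langle k\rangle)_{k}$ classically generates $D^b(\coh_{\C^*}E)$.

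The main obstacle is the equivariant-to-non-equivariant comparison packaged in the displayed isomorphism: one must verify that the ungraded derived $\Hom$ really splits as the direct sum of its graded pieces and that the forgetful functor reflects vanishing in the derived category. The remaining ingredients -- compact generation of $D(\Qcoh_{\C^*}E)$ and the identification of its compacts with $D^b(\coh_{\C^*}E)$ -- are the equivariant counterparts of facts used in Corollary~\ref{co3}, and I would establish them through the stack $[E/\C^*]$ already appearing in Lemma~\ref{newLm}.
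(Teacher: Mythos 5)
Your argument is correct, and the step you yourself flag as the main obstacle is exactly where the paper's work lives. The paper never passes through the forgetful functor to $D(\Qcoh E)$: instead it constructs, for each $k\in\Z$, the exact adjunction $\pi^{*k}=\langle k\rangle\circ\pi^{*0}\dashv\pi_{*k}$ between $\Qcoh B$ and $\Qcoh_{\C^*}E$, where $\pi_{*k}({\cal F})=\pi_*({\cal F})_k$ is the weight-$k$ piece, and tests ${\cal N}$ directly against ${\cal T}_0$ on $B$: vanishing of all $\Hom_{D(\Qcoh_{\C^*}E)}({\cal T}\langle k\rangle,{\cal N}[m])$ forces each complex $D(\pi_{*k})({\cal N})$ to be acyclic, hence $D(\pi_*)({\cal N})=\bigoplus_{k}D(\pi_{*k})({\cal N})$ is acyclic (cohomology commutes with direct sums), and since $\pi$ is affine $\pi_*$ reflects acyclicity, so ${\cal N}=0$; note that this uses only that ${\cal T}_0$ generates $D(\Qcoh B)$, with no compactness hypothesis. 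Your route through Proposition \ref{prop-old-l2-1} works as well, and your displayed splitting is true, but to prove it you would in effect redo the paper's computation: by adjunction $\Hom_{D(\Qcoh E)}({\cal T},\Phi({\cal N})[m])\cong\Hom_{D(\Qcoh B)}({\cal T}_0,D(\pi_*)\Phi({\cal N})[m])$, the weight decomposition $D(\pi_*)\Phi({\cal N})=\bigoplus_k D(\pi_{*k})({\cal N})$ holds termwise by exactness, and then compactness of ${\cal T}_0$ (coherent on the smooth projective $B$, hence perfect) lets $\Hom$ pass through the coproduct, each summand being $\Hom_{D(\Qcoh_{\C^*}E)}({\cal T}\langle k\rangle,{\cal N}[m])$ by adjunction again. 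Be aware that finite generation at the chain level is not by itself enough in the unbounded derived category, so compactness of ${\cal T}_0$ is genuinely needed on your route (it is automatic under the standing hypotheses, but the paper's argument for the generation statement does without it). For the second assertion the two proofs share the skeleton (compact generators plus Ravenel--Neeman), but the paper identifies $D(\Qcoh_{\C^*}E)^c$ with the subcategory of complexes with bounded coherent cohomology by a hands-on two-inclusion thick-subcategory argument resting on Lemma \ref{newLm}, whereas you outsource ``compact $=$ perfect $=$ bounded coherent on the smooth stack $[E/\C^*]$'' to the literature; either way you should justify the compactness of ${\cal T}\langle k\rangle$ itself rather than assert it, for instance via $\Hom_{D(\Qcoh_{\C^*}E)}({\cal T}\langle k\rangle,-)\cong\Hom_{D(\Qcoh B)}({\cal T}_0,D(\pi_{*k})(-))$ together with the fact that $D(\pi_{*k})$ preserves coproducts. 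In sum, the paper's route buys a self-contained argument that avoids compactness in the generation step; yours buys a clean reduction to the non-equivariant Proposition \ref{prop-old-l2-1}, at the cost of the splitting lemma (whose proof is the paper's adjunction machinery) and some stack-theoretic black boxes.
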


\begin{proof}  Let ${\cal H}$ be a $\C^*$-equivariant quasi-coherent sheaf on $B$ endowed with the trivial $\C^*$-action. For any open affine subscheme $U\subset B$ we obtain a  $\C[U]$-group scheme $\C^*_U\coloneqq U\times \C^*$ in the sense of \cite[section I.2.1, p. 19]{Ja}. Using  \cite[section 1.2, p. 241]{Tho} we see that the $\C[U]$-module  ${\cal H}(U)$ becomes a  $\C^*_U$-module in the sense of \cite[section I.2.7, p. 19]{Ja}.  By \cite[section I.2.11, p. 30]{Ja} we obtain a decomposition
$${\cal H}(U)=\bigoplus_{\lambda \in\Z} {\cal H}(U)_\lambda 
$$
of ${\cal H}(U)$ as direct sum of $\C[U]$-submodules, each  ${\cal H}(U)_\lambda$ being the submodule ${\cal H}(U)$ on which $\C^*$ acts with weight $\lambda$.  Therefore we have a global direct sum decomposition
\begin{equation}
{\cal H}=\bigoplus_{\lambda \in\Z} {\cal H}_\lambda	
\end{equation}
of ${\cal H}$ as direct sum of quasi-coherent subsheaves. Note that this weight  decomposition holds for an arbitrary quasi-coherent sheaf on $B$ (coherence is not necessary).

Let  now ${\cal F}$ be a $\C^*$-equivariant quasi-coherent sheaf on $E$. The corresponding decomposition 
\begin{equation}\label{decweights}
\pi_*({\cal F})=\bigoplus_{\lambda \in\Z} \pi_*({\cal F})_\lambda	
\end{equation}
combined with \cite[Lemma 28.11.6]{Stack} shows that the functor $\pi_*$ is an equivalence between the category $\Qcoh_{\C^*} E$ and the category of $\Z$-graded quasi-coherent $S^\bullet{\cal E}^\smvee$-modules on $B$. 

Let 
$$\pi^{*0}:\Qcoh B\to \Qcoh_{\C^*} E
$$
be the functor obtained by endowing the pull-back $\pi^*({\cal H})$ of a quasi-coherent sheaf on $B$ with its obvious $\C^*$-structure. Its right adjoint is the functor
$$\pi_{*0}:\Qcoh_{\C^*}E\to  \Qcoh B
$$
given by ${\cal F}\mapsto \pi_*({\cal F})_0=\pi_*({\cal F})^{\C^*}$, so  for any quasi-coherent sheaf ${\cal H}$ on $B$ and $\C^*$-equivariant quasi-coherent sheaf ${\cal F}$ on $E$ we have an identification
$$\Hom_{\Qcoh_{\C^*}E}(\pi^{*0}({\cal H}),{\cal F})=\Hom_{\Qcoh B}({\cal H},\pi_{*0}({\cal F})).
$$
For $k\in\Z$ we get an identification
$$\Hom_{\Qcoh_{\C^*}E}(\pi^{*0}({\cal H})\langle k\rangle,{\cal F})=\Hom_{\Qcoh_{\C^*}E}(\pi^{*0}({\cal H}),{\cal F}\langle -k\rangle)$$
$$=\Hom_{\Qcoh B}({\cal H},\pi_{*0}({\cal F}\langle -k\rangle))=\Hom_{\Qcoh B}({\cal H}, \pi_*({\cal F}\langle -k\rangle)^{\C^*})$$
$$=\Hom_{\Qcoh B}({\cal H}, \pi_*({\cal F})_k), 
$$
which shows that the functor
$$\pi_{*k}:\Qcoh_{\C^*}E\to  \Qcoh B
$$
given by ${\cal F}\mapsto \pi_*({\cal F})_k$ is the right adjoint of the composition 
$$\pi^{*k}\coloneqq \langle k\rangle\circ \pi^{*0}:\Qcoh B\to \Qcoh_{\C^*} E.$$
The functors $\pi_{*k}$, $\pi^{*k}$ are exact, because $\pi^*$, $\pi_*$ are exact. As in the proof of (\ref{adj}) we obtain well-defined, mutually adjoint functors
$$D(\pi_{*k}):D(\Qcoh_{\C^*}E)\to D(\Qcoh B),\ D(\pi^{*k}):D(\Qcoh B)\to D(\Qcoh_{\C^*} E).
$$
Therefore, for any objects ${\cal M}$, ${\cal N}$ of  $D(\Qcoh B)$, $D(\Qcoh_{\C^*} E)$ respectively we have an identification
\begin{equation}\label{adjk}
\Hom_{D(\Qcoh_{\C^*}E)}((D\pi^{*k})({\cal M}),{\cal N})=\Hom_{D(\Qcoh B)}({\cal M},D(\pi_{*k}){\cal N}).	
\end{equation}
 Let ${\cal N}$ be an object of $D(\Qcoh_{\C^*} E)$ such that
 $$\Hom_{D(\Qcoh_{\C^*} E)}({\cal T}\langle k\rangle, {\cal N}[m])=0\ \forall (k,m)\in\Z\times\Z.
 $$
Using (\ref{adjk}) we obtain
$$\Hom_{D(\Qcoh B)}({\cal T}_0, D(\pi_{*k})({\cal N}[m]))=0\ \forall (k,m)\in\Z\times\Z.
$$
Therefore
$$\Hom_{D(\Qcoh B)}\big({\cal T}_0, D(\pi_{*k})({\cal N})[m]\big)=0\ \forall (k,m)\in\Z\times\Z.
$$	
Since ${\cal T}_0$ generates $D(\Qcoh B)$, it follows $D(\pi_{*k})({\cal N})=0$ in $D(\Qcoh B)$ for any $k\in\Z$.  Therefore the complex $D(\pi_{*k})({\cal N})$ is acyclic for any $k\in\Z$. Applying  (\ref{decweights})  to the terms of the complex $D(\pi_*)({\cal N})$ and taking into account that  cohomology commutes with direct sums, it follows that  $D(\pi_*)({\cal N})$ is an acyclic complex. Thus the complex ${\cal N}$ is acyclic, so ${\cal N}=0$ in $D(\Qcoh_{\C^*} E)$.\\

In order to prove that $({\cal T}\langle k\rangle)_{k\in\Z}$ classically generates $D^b(\coh_{\C^*} E)$ note first that ${\cal T}\langle k\rangle$ is a compact object of $D(\Qcoh_{\C^*} E)$ for any $k\in\Z$. Therefore $D(\Qcoh_{\C^*} E)$ is compactly generated by the family $({\cal T}\langle k\rangle)_{k\in\Z}$. By Ravenel-Neeman's Theorem \cite[2.1.2]{BVdB} it follows that $({\cal T}\langle k\rangle)_{k\in\Z}$ classically generates $D(\Qcoh_{\C^*} E)^c$. We claim that $D(\Qcoh_{\C^*} E)^c$ coincides with with the full subcategory %
$${\cal I}\coloneq D_\bdcoh	(\Qcoh_{\C^*}E)$$
 of $D(\Qcoh_{\C^*} E)$ introduced in the proof of Lemma \ref{newLm}. According to this lemma, ${\cal I}$ is formed by the complexes which are isomorphic (in   $D(\Qcoh_{\C^*} E)$) with a bounded complex of coherent $\C^*$-equivariant sheaves. Since any object ${\cal F}$ in $\coh_{\C^*} E$ defines obviously a compact object of $D(\Qcoh_{\C^*} E)$, it follows that $D(\Qcoh_{\C^*} E)^c$ contains the full triangulated subcategory generated by $\coh_{\C^*} E$, so $D(\Qcoh_{\C^*} E)^c\supset{\cal I}$. On the other hand  $D(\Qcoh_{\C^*} E)^c$ is classically generated by the family $({\cal T}\langle k\rangle)_{k\in\Z}$. Since ${\cal I}$ is a thick subcategory of  $D(\Qcoh_{\C^*} E)$  which contains these objects, it follows that ${\cal I}\supset D(\Qcoh_{\C^*} E)^c$. Therefore ${\cal I}=D(\Qcoh_{\C^*} E)^c$ and the last assertion of the theorem follows from Lemma \ref{newLm}.
\end{proof}

\subsection{Graded tilting  on vector bundles}\label{TiltingIH}

We will need the following generalized Beilinson lemma:
\begin{lm}\label{Schwede} Let ${\cal C}$, ${\cal D}$ be a triangulated categories with arbitrary set-indexed  coproducts, $F:{\cal C}\to {\cal D}$ be an exact functor which commutes with set-indexed coproducts, and  let $(T_j)_{j\in J}$ be a family of compact generators of ${\cal C}$	such that 
\begin{enumerate}
\item $(F(T_j))_{j\in J}$ is a family of compact generators of ${\cal D}$.
\item For any pair $(j,n)\in J\times \Z$ the map
$$\Hom_{\cal C}(T_j[n],T_k)\to \Hom_{\cal D}(F(T_j)[n],F(T_k))
$$
induced by $F$ is bijective for any $k\in\Z$.
\end{enumerate}
Then $F$ is an equivalence.
\end{lm}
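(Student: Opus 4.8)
The plan is to run the classical dévissage through localizing subcategories (full triangulated subcategories closed under set-indexed coproducts), using the compactness of the $T_j$ and of the $F(T_j)$ at every stage, and invoking — as at the end of the proof of Theorem \ref{prop-old-l2-2} — that a set of \emph{compact} generators generates its ambient category as a localizing subcategory (Ravenel--Neeman). Here I read condition (2) with both indices $j,k$ ranging over $J$. First I would establish a strong faithfulness statement on the generators: for every $X\in{\cal C}$ and every $(j,n)\in J\times\Z$ the comparison map $\Hom_{\cal C}(T_j[n],X)\to\Hom_{\cal D}(F(T_j)[n],F(X))$ is bijective. Let ${\cal C}'\subset{\cal C}$ be the full subcategory of objects for which this holds for all $(j,n)$. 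Condition (2) places every $T_k$ in ${\cal C}'$. The subcategory ${\cal C}'$ is stable under shifts and, by a five-lemma argument applied to the two long exact $\Hom$-sequences attached to a distinguished triangle (recall $F$ is exact, so it carries triangles to triangles), it is triangulated. Finally, since $T_j$ and $F(T_j)$ are compact and $F$ commutes with coproducts, for $X=\coprod_i X_i$ the comparison map is the coproduct of the comparison maps for the $X_i$; hence ${\cal C}'$ is closed under coproducts. Thus ${\cal C}'$ is localizing and contains the compact generators $(T_j)_{j\in J}$, so ${\cal C}'={\cal C}$.

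Next I would upgrade this to full faithfulness of $F$. Fix $Y\in{\cal C}$ and let ${\cal C}''\subset{\cal C}$ be the full subcategory of objects $X$ such that $\Hom_{\cal C}(X[n],Y)\to\Hom_{\cal D}(F(X)[n],F(Y))$ is bijective for all $n\in\Z$. The previous step shows each $T_j\in{\cal C}''$; as before ${\cal C}''$ is shift-stable and triangulated by the five lemma, and it is closed under coproducts because $\Hom(-,Y)$ converts a coproduct in its first variable into a product on both sides (and $F$ commutes with coproducts), so the comparison map for $\coprod_i X_i$ is the product of the individual bijections. Hence ${\cal C}''$ is localizing, contains the generators, and equals ${\cal C}$. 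Taking $n=0$ and letting $Y$ range over ${\cal C}$ yields that $\Hom_{\cal C}(X,Y)\simto\Hom_{\cal D}(F(X),F(Y))$ is bijective for all $X,Y$, i.e. $F$ is fully faithful.

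Finally I would prove essential surjectivity. Let ${\cal D}'\subset{\cal D}$ be the essential image of $F$. It is stable under shifts and coproducts because $F$ commutes with both, and it is triangulated: any morphism between objects of ${\cal D}'$ equals $F(f)$ for some $f$ in ${\cal C}$ by full faithfulness, and completing $f$ to a triangle in ${\cal C}$ and applying the exact functor $F$ exhibits the cone of $F(f)$, up to isomorphism, as an object $F(\mathrm{cone}(f))$ of ${\cal D}'$. Thus ${\cal D}'$ is a localizing subcategory of ${\cal D}$ which contains, by condition (1), the compact generators $(F(T_j))_{j\in J}$; by Ravenel--Neeman's theorem ${\cal D}'={\cal D}$, so $F$ is essentially surjective. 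Together with the previous step this shows $F$ is an equivalence.

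I expect the only genuinely delicate point to be the repeated implication ``a localizing subcategory containing the compact generators equals the whole category''. This is exactly where the hypothesis that the $T_j$ (and hence the $F(T_j)$) are \emph{compact} is indispensable: without it the comparison subcategories ${\cal C}'$, ${\cal C}''$ need not be closed under the infinite coproducts that make them localizing, and the generation theorem could not be applied. Everything else is a formal verification that the relevant classes are triangulated and stable under coproducts.
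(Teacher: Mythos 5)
Your proposal is correct and takes essentially the same route as the paper's own proof (which, as the paper notes, follows Schwede's Proposition 3.10): the same three-step dévissage through the comparison subcategory ${\cal C}'$ on the generator side, the subcategory ${\cal C}_Y$ of objects $X$ with bijective comparison maps into a fixed $Y$, and the essential image ${\cal D}'$, each verified to be triangulated and closed under set-indexed coproducts and then identified with the ambient category via the compact generators. Your reading of condition (2) with $k$ ranging over $J$ (rather than the statement's typographical $k\in\Z$) matches the intended hypothesis.
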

The case of a single generator  is  \cite[Proposition 3.10]{Sch}, and the proof in the general case follows the same method. We give this proof below for completeness. 
\begin{proof}
Let ${\cal C}'$ be the full subcategory of ${\cal C}$ whose  objects are the objects $Y$ of ${\cal C}$ for which the map
$$\Hom_{\cal C}(T_j[n],Y)\to \Hom_{\cal D}(F(T_j)[n],F(Y)) 
$$
is bijective for any $(j,n)\in J\times \Z$. Since $F$ commutes with coproducts, and $T_j[n]$, $F(T_j)[n]$ are compact objects, it follows that the subcategory ${\cal C}'$ is closed under set-indexed coproducts. Moreover, using the exactness of $F$ and \cite[Lemma 1.1.10]{Nee} it follows that ${\cal C}'$ is closed under extensions, i.e. if two objects of a distinguished triangle are in ${\cal C}'$, then so is the third object of the triangle.  In particular ${\cal C}'$ is a triangulated subcategory. Since ${\cal C}'$ contains the family of generators $(T_j)_{j\in J}$ it follows by \cite[Lemma 2.2.1]{SchSh} that ${\cal C}'={\cal C}$.

Fix now an object $Y$ of ${\cal C}$, and let ${\cal C}_Y$ be the full subcategory of ${\cal C}$ whose  objects are the objects $X$ of ${\cal C}$ for which the map
$$\Hom_{\cal C}(X,Y)\to \Hom_{\cal D}(F(X),F(Y)) 
$$
induced by $F$ is bijective. Since $F$ commutes with direct sums, and the functors $\Hom(-,Y)$, $\Hom(-,F(Y))$ sends coproducts  to products (by the universal property of the coproduct), it follows that ${\cal C}_Y$ is closed under direct sums.  Using the exactness of $F$ and \cite[Remark 1.1.11]{Nee} it follows that ${\cal C}_Y$ is closed under extensions, in particular it is a triangulated subcategory of ${\cal C}$. By the first part of the proof we know that ${\cal C}_Y$ contains the family of generators $(T_j)_{j\in J}$, so ${\cal C}_Y={\cal C}$. This proves that $F$ is fully faithful. 

To prove that $F$ is essentially surjective, let ${\cal D}'$ be the full subcategory of ${\cal D}$ whose objects are the objects of ${\cal D}$ which are isomorphic to an object of the form $F(X)$.  ${\cal D}'$ is closed under the shift functor and coproducts, because $F$ commutes with these operations. We prove that  ${\cal D}'$ is also closed under extensions.  Let
$$U\textmap{\phi} V\to W\to U[1]
$$
be a distinguished triangle with $U$, $V$ objects of ${\cal D}'$. Therefore there exists objects $X$, $Y$ in ${\cal C}$  such that $U\simeq F(X)$, $V\simeq F(Y)$. Fix isomorphisms $u:F(X)\to U$, $v:F(Y)\to V$. We know that $F$ is full, so there exists $f\in \Hom(X,Y)$ such that $F(f)=v^{-1}\phi u$. We can embed $f$ in a distinguished triangle
$$X\textmap{f} Y\to Z\to X[1],
$$
which gives (since $F$ is exact) a distinguished triangle
$$F(X)\textmap{F(f)=v^{-1}\phi u} F(Y)\to F(Z)\to F(X)[1].
$$
In  the following commutative diagram
$$\begin{tikzcd}
F(X)\ar[d, "u"', "\simeq"] \ar[r, "F(f)"] & F(Y)\ar[d, "v", "\simeq"'] \ar[r]&F(Z)\ar[r] \ar[d, dashed, "\simeq"]& F(X)[1]	\\
U\ar[r,"\phi"] & V\ar[r]& W\ar[r]& U[1]
\end{tikzcd}
$$
the rows are distinguished triangles. Therefore $F(Z)$ and $W$ are isomorphic, so $W$ is also an object of ${\cal D'}$. Since ${\cal D}'$ is closed under shifts in both directions, it follows that ${\cal D}'$ is closed under extensions, in particular it is a triangulated subcategory. But we know that ${\cal D}'$ is also closed under  coproducts and contains the family of generators  $(F(T_j))_{j\in J}$. Therefore ${\cal D}'={\cal D}$, which shows that $F$ is essentially surjective.
\end{proof}


In this section 
%
we let again  ${\cal T}_0$ be  a locally free sheaf on $B$ classically generating $D^b(\coh B)$, which satisfies the hypothesis of Corollary \ref{co3} (3), so that ${\cal T}\coloneqq \pi^*({\cal T}_0)$ is a tilting sheaf on $E$.

Using Geometric Tilting Theory and Corollary \ref{co3} (3) we see that the associated graded $\C$-algebra   
$$\Lambda=\End_E({\cal T})=\bigoplus_{m\geq 0} H^0(B, {\cal T}_0^\smvee\otimes {\cal T}_0\otimes S^m{\cal E}^\smvee )
$$
is a  finite $R$-algebra, finitely generated over $\C$, and has finite global dimension. Let $\mathrm{Mod}_\Z\Lambda$ ($\mathrm{mod}_\Z\Lambda$) be the category of (respectively finitely generated) graded right $\Lambda$-modules.  The functor
$$\Hom^{\mathrm{gr}}_{\Qcoh_{\C^*} E}({\cal T},-):\Qcoh_{\C^*} E\to \mathrm{Mod}_\Z\Lambda 
$$
defined by
$${\cal F}\to \bigoplus_{k\in\Z} \Hom_{\Qcoh_{\C^*} E}({\cal T},{\cal F}\langle k\rangle)  
$$
 is left exact. Its right derived functor
 $$R\Hom^{\mathrm{gr}}_{\Qcoh_{\C^*} E}({\cal T},-):D(\Qcoh_{\C^*} E)\to D(\mathrm{Mod}_\Z\Lambda)
 $$
 will be called the graded tilting functor.  For an object ${\cal F}$ in $\Qcoh_{\C^*} E$ we have
 $$H^n(R\Hom^{\mathrm{gr}}_{\Qcoh_{\C^*} E}({\cal T},{\cal F}))=\bigoplus_{k\in \Z} \Ext^n_{\Qcoh_{\C^*} E}({\cal T},{\cal F}\langle k\rangle).
 $$
 In particular, for $m\in\Z$ we have
\begin{equation}\label{F1} 
H^n(R\Hom^{\mathrm{gr}}_{\Qcoh_{\C^*} E}({\cal T},{\cal T}\langle m\rangle))=\bigoplus_{k\in \Z} \Ext^n_{\Qcoh_{\C^*} E}({\cal T},{\cal T}\langle m+k\rangle).
 \end{equation}
 Using \cite[Lemma 2.2.8]{BFK}  we obtain the general formula
 $$\Ext^n_{\Qcoh_{\C^*}E}({\cal F}',{\cal F}'')=\Ext^n_{\Qcoh E}({\cal F}',{\cal F}'')^{\C^*},
 $$
which shows that

$$\Ext^n_{\Qcoh_{\C^*}E}({\cal T}\langle s\rangle, {\cal T}\langle t\rangle)=\Ext^n_{\Qcoh E}({\cal T},{\cal T})_{t-s}.
$$
Since ${\cal T}$ is a tilting sheaf on $E$ it follows that
\begin{equation}\label{F2}\Ext^n_{\Qcoh_{\C^*}E}({\cal T}\langle s\rangle, {\cal T}\langle t\rangle)=
\left\{
\begin{array}{ccc}
\Lambda_{t-s} & \rm for & n=0\\
0 &	 \rm for & n>0
\end{array}\right..
\end{equation}
Combining this formula with (\ref{F1}) we see that $R\Hom^{\mathrm{gr}}_{\Qcoh_{\C^*} E}({\cal T},{\cal T}\langle m\rangle)$ is acyclic in degree $n\ne 0$, and  
\begin{equation}\label{F3}
R\Hom^{\mathrm{gr}}_{\Qcoh_{\C^*} E}({\cal T},{\cal T}\langle m\rangle)=\bigoplus_{k\in\Z} \Lambda_{m+k} =\Lambda\langle m\rangle.	
\end{equation}
Here $\langle m\rangle:\mathrm{Mod}_\Z\Lambda\to \mathrm{Mod}_\Z\Lambda$  is the standard $m$-order shift functor on the category of graded right $\Lambda$-modules.
\begin{lm}\label{shifts}
Let $\Lambda$ be a graded $\C$-algebra of finite global dimension. The family $(\Lambda\langle j\rangle )_{j\in\Z}$ classically generates $D^b(\mathrm{mod}_\Z\Lambda)$ and generates $D(\mathrm{Mod}_\Z\Lambda)$.	
\end{lm}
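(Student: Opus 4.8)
The plan is to prove the two assertions separately, since they have quite different characters: the generation of $D(\mathrm{Mod}_\Z\Lambda)$ is essentially formal and uses nothing beyond projectivity of the $\Lambda\langle j\rangle$, whereas the classical generation of $D^b(\mathrm{mod}_\Z\Lambda)$ uses the finite global dimension hypothesis in an essential way. For the first claim I would argue by orthogonality, exactly in the style of the proofs of Proposition \ref{prop-old-l2-1} and Theorem \ref{prop-old-l2-2}. Each $\Lambda\langle j\rangle$ is a free, hence projective, graded module, so $\Hom_{\mathrm{Mod}_\Z\Lambda}(\Lambda\langle j\rangle,-)$ is exact and needs no resolution; concretely it picks out a single graded component, $\Hom_{\mathrm{Mod}_\Z\Lambda}(\Lambda\langle j\rangle,M)=M_{-j}$ (in the shift convention of (\ref{F3})). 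Since an exact functor commutes with cohomology, this gives, for every object $N$ of $D(\mathrm{Mod}_\Z\Lambda)$,
$$\Hom_{D(\mathrm{Mod}_\Z\Lambda)}(\Lambda\langle j\rangle, N[m]) = H^m(N)_{-j}.$$
Thus if these groups vanish for all $(j,m)\in\Z\times\Z$, then every graded component of every cohomology module $H^m(N)$ vanishes, so $H^m(N)=0$ for all $m$ and $N\simeq 0$ in $D(\mathrm{Mod}_\Z\Lambda)$. This is precisely the required generation statement.

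For the classical generation of $D^b(\mathrm{mod}_\Z\Lambda)$ I would carry out two reductions. First, the canonical truncation triangles express any bounded complex $X$ as an iterated cone of finitely many of its cohomology modules $H^i(X)[-i]$, so it suffices to show that each finitely generated graded module $M$, placed in degree $0$, lies in the thick subcategory $\mathcal{G}$ generated by $(\Lambda\langle j\rangle)_{j\in\Z}$. Second, because $\Lambda$ is graded Noetherian and has finite graded global dimension $d$, I can choose a finite resolution
$$0\to P_d\to\cdots\to P_1\to P_0\to M\to 0$$
by finitely generated graded projective modules: Noetherianity keeps every syzygy finitely generated, so successive free covers have finite rank, and finite global dimension forces the $d$-th syzygy $P_d$ to be projective. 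Each $P_i$ is a direct summand of a finite free module $\bigoplus_\nu\Lambda\langle j_\nu\rangle$, and since $\mathcal{G}$ is closed under direct summands, $P_i\in\mathcal{G}$. The resolution identifies $M$ in $D^b(\mathrm{mod}_\Z\Lambda)$ with the bounded complex $[P_d\to\cdots\to P_0]$, which is assembled from its terms by finitely many cones; hence $M\in\mathcal{G}$, and therefore $\mathcal{G}=D^b(\mathrm{mod}_\Z\Lambda)$.

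The one genuinely essential ingredient, and the step I expect to be the crux, is the existence of a \emph{finite} resolution by \emph{finitely generated} projectives. Classical generation — as opposed to the merely localizing generation used in the first part — permits only finitely many cone and summand operations, so an infinite projective resolution would be useless; it is exactly the finite global dimension hypothesis, together with Noetherianity to keep the syzygies finitely generated, that turns $M$ into an object of the thick subcategory. Everything else is the standard truncation and dévissage bookkeeping, and no tilting input is needed for this lemma.
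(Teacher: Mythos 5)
Your proof is correct and takes essentially the same route as the paper: the generation statement for $D(\mathrm{Mod}_\Z\Lambda)$ via projectivity of the $\Lambda\langle j\rangle$ and the identification $\Hom_{D(\mathrm{Mod}_\Z\Lambda)}(\Lambda\langle j\rangle, N[m])=H^m(N)_{-j}$ is exactly the paper's argument, and your classical-generation argument (finite free modules and their summands lie in the thick subcategory, then finite resolutions by finitely generated graded projectives) is the paper's as well, with your truncation d\'evissage merely spelling out what the paper compresses into a single ``this implies.'' Your explicit appeal to Noetherianity of $\Lambda$ to keep syzygies finitely generated is a tacit assumption in the paper's proof too (not stated in the lemma, but satisfied in every application), so this is a point of extra care rather than a divergence.
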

\begin{proof}
The thick envelope of the family $(\Lambda\langle j\rangle)_{j\in\Z}$ contains all direct sums of the form $\bigoplus_{i\in I} \Lambda\langle k_i\rangle^{\oplus n_i}$
with $I$ finite, $k_i\in\Z$, $n_i\in\N$. Therefore it contains all finitely generated graded projective $\Lambda $-modules. Since $\Lambda$ has finite global dimension, every finitely generated graded $\Lambda$-module has a finite resolution by finitely generated graded projective $\Lambda$-modules. This implies that $(\Lambda\langle j\rangle)_{j\in\Z}$ classically generates $D^b(\mathrm{mod}_\Z\Lambda)$.\\

For the second claim, let $M^\bullet=\bigoplus_{k\in\Z} M^\bullet_k $ be a complex of graded right $\Lambda$-modules such that 
\begin{equation}\label{OrthGr}
\Hom_{D(\mathrm{Mod}_\Z\Lambda)}(\Lambda\langle j\rangle, M^\bullet[n])=0  \ \forall (j,n)\in\Z\times\Z.
\end{equation}
Note that, since $\Lambda(j)$ is a projective object in the category of graded $\Lambda$-modules, we have
$$\Hom_{D(\mathrm{Mod}_\Z\Lambda)}(\Lambda\langle j\rangle, M^\bullet[n])=H^n(\Hom^\bullet(\Lambda\langle j\rangle, M^\bullet))=H^n(M^\bullet\langle -j\rangle).$$ 

Therefore (\ref{OrthGr}) implies that the complex $M^\bullet_k$ is acyclic for any $k\in\Z$, so $M^\bullet=0$ in $D(\mathrm{Mod}_\Z\Lambda)$.
\end{proof}

\begin{thry}\label{gap}
Let $B$ be a smooth projective variety over $\C$, $H$ a finite dimensional complex vector space, and 	 
$$
\begin{tikzcd}[row sep=7ex]
E \ar[r, hook, "i"]  \ar[rr, "\pi", bend left=28, two heads ] &	B\times H^\smvee \ar[r, two heads, "p"] &B
\end{tikzcd}
$$
a sub-bundle of the trivial bundle $B\times H^\smvee$ over $B$. 
%
Let ${\cal T}_0$ be a locally free sheaf on $B$ classically  generating $D^b(\coh B)$, such that ${\cal T}\coloneqq \pi^*({\cal T}_0)$ is a tilting sheaf on $E$. Set $\Lambda\coloneqq \End_E({\cal T})$. Then $\Lambda$ is a graded Noetherian $\C$-algebra of finite graded global dimension, and the graded tilting functor  
$$R\Hom^{\mathrm{gr}}_{\Qcoh_{\C^*} E}({\cal T},-): D(\Qcoh_{\C^*} E)\to D(\mathrm{Mod}_\Z\Lambda)
$$
is an equivalence which restricts to an equivalence
$$D^b(\coh_{\C^*} E)\to D^b(\mathrm{mod}_\Z\Lambda).
$$
\end{thry}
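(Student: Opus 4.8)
The plan is to deduce the theorem directly from the generalized Beilinson lemma (Lemma \ref{Schwede}), applied to the graded tilting functor $F\coloneqq R\Hom^{\mathrm{gr}}_{\Qcoh_{\C^*} E}({\cal T},-)$ with source ${\cal C}\coloneqq D(\Qcoh_{\C^*} E)$, target ${\cal D}\coloneqq D(\mathrm{Mod}_\Z\Lambda)$, and candidate generators $T_k\coloneqq{\cal T}\langle k\rangle$, $k\in\Z$. Before invoking the lemma I would record the structural facts about $\Lambda$. By Corollary \ref{co3} and the discussion immediately preceding this theorem, $\Lambda=\End_E({\cal T})=\bigoplus_{m\ge 0}H^0(B,{\cal T}_0^\smvee\otimes{\cal T}_0\otimes S^m{\cal E}^\smvee)$ is a finite $R$-algebra (with $R=\C[C(E)]$), finitely generated over $\C$, of finite global dimension. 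Since $R$ is a finitely generated $\C$-algebra it is Noetherian, and a module-finite algebra over a Noetherian ring is Noetherian; this settles the first assertion, finite graded global dimension being exactly the input from Corollary \ref{co3}.

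Next I would verify the hypotheses of Lemma \ref{Schwede}. Both ${\cal C}$ and ${\cal D}$ are unbounded derived categories of Grothendieck abelian categories, hence have set-indexed coproducts, and $F$ is exact as a derived functor. Because ${\cal T}$ is a compact (perfect) object of $D(\Qcoh_{\C^*}E)$, the groups $\Ext^n_{\Qcoh_{\C^*}E}({\cal T},-)$ commute with set-indexed coproducts, so $F$ does as well. Theorem \ref{prop-old-l2-2} provides that $({\cal T}\langle k\rangle)_{k\in\Z}$ is a family of compact generators of ${\cal C}$. On the target side, formula (\ref{F3}) gives $F({\cal T}\langle k\rangle)=\Lambda\langle k\rangle$; each $\Lambda\langle k\rangle$ is a finitely generated graded projective module, hence compact in ${\cal D}$, and Lemma \ref{shifts} shows that $(\Lambda\langle k\rangle)_{k\in\Z}$ generates $D(\mathrm{Mod}_\Z\Lambda)$. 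Finally, the morphism-comparison in hypothesis (2) is precisely the juxtaposition of formula (\ref{F2}) with the computation $\Hom_{D(\mathrm{Mod}_\Z\Lambda)}(\Lambda\langle j\rangle[n],\Lambda\langle k\rangle)=H^{-n}(\Lambda\langle k-j\rangle)$ made in the proof of Lemma \ref{shifts}: both the source and target Hom-groups equal $\Lambda_{k-j}$ for $n=0$ and vanish for $n\ne 0$, and the map induced by $F$ realizes the canonical identification. Lemma \ref{Schwede} then yields that $F$ is an equivalence of triangulated categories.

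For the bounded statement I would use that any equivalence of triangulated categories preserves compact objects, and identify those on each side. On the source, Theorem \ref{prop-old-l2-2} together with Lemma \ref{newLm} gives $D(\Qcoh_{\C^*}E)^c\simeq D^b(\coh_{\C^*}E)$. On the target, since $\Lambda$ has finite global dimension the shifts $\Lambda\langle j\rangle$ classically generate $D^b(\mathrm{mod}_\Z\Lambda)$ by Lemma \ref{shifts}, so $D(\mathrm{Mod}_\Z\Lambda)^c\simeq D^b(\mathrm{mod}_\Z\Lambda)$. Restricting the equivalence $F$ to compact objects therefore produces the desired equivalence $D^b(\coh_{\C^*}E)\simto D^b(\mathrm{mod}_\Z\Lambda)$.

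The step I expect to demand the most care is the compactness of ${\cal T}$ and its two consequences: it is what guarantees that $F$ commutes with coproducts (needed to apply Lemma \ref{Schwede} at all) and it underlies the clean identification of the compact objects on both sides that upgrades the big equivalence to the bounded one. One should also check, rather than merely assert, that the isomorphism in hypothesis (2) is genuinely the map \emph{induced} by $F$ and not an abstract coincidence of Hom-groups; this is where the explicit formulas (\ref{F2}) and (\ref{F3}) must be used functorially. Everything else is a matter of assembling the already-established lemmas.
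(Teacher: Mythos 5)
Your proposal is correct and takes essentially the same route as the paper's own proof: applying the generalized Beilinson lemma (Lemma \ref{Schwede}) to the compact generators $({\cal T}\langle k\rangle)_{k\in\Z}$ with hypotheses verified via formulas (\ref{F2}), (\ref{F3}) and Lemma \ref{shifts}, and then deducing the bounded equivalence by identifying the compact objects on both sides through Theorem \ref{prop-old-l2-2}, Lemma \ref{newLm}, and Lemma \ref{shifts} — the paper's step (a)--(c) argument is exactly your restriction-to-compacts argument. One small misattribution: finite \emph{graded} global dimension is not the direct content of Corollary \ref{co3}; the paper first obtains finite ungraded global dimension of $\Lambda$ from geometric tilting theory and then passes to the graded statement by citing \cite[Theorem II.8.2, p.~122]{NaOy}, a transfer step your writeup should make explicit.
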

\begin{proof} 	Geometric tilting theory applies, and shows that $\Lambda$ (as ungraded $\C$-algebra) has finite global dimension. By \cite[Theorem II.8.2 p. 122]{NaOy} it follows that $\Lambda$ has also finite graded global dimension. By Theorem \ref{prop-old-l2-2} we know that   $({\cal T}\langle k\rangle)_{k\in\Z}$ is a family of compact generators of the category $D(\Qcoh_{\C^*} E)$. We will show that the graded tilting functor and this family of compact generators satisfy the hypothesis of Lemma \ref{Schwede}.  The  graded tilting functor is exact and commutes with set-indexed coproducts.  Formula (\ref{F3}) shows that 
$$
R\Hom^{\mathrm{gr}}_{\Qcoh_{\C^*} E}({\cal T},{\cal T}\langle m\rangle)=\Lambda\langle m\rangle \ \forall m\in\Z. 
$$
Since $(\Lambda\langle m\rangle)_{m\in\Z}$ is a family of compact generators of $D(\mathrm{Mod}_\Z\Lambda)$ by Lemma \ref{shifts}, we see that the first hypothesis of Lemma \ref{Schwede} is satisfied. To check the second hypothesis, it suffices to note that by (\ref{F2}) one has canonical identifications
$$\Hom_{D(\Qcoh_{\C^*} E)}({\cal T}\langle k\rangle, {\cal T}\langle l\rangle)=\Hom_{\Qcoh_{\C^*} E}({\cal T}\langle k\rangle, {\cal T}\langle l\rangle)=\Lambda_{l-k}=$$
$$=\Hom_{\mathrm{Mod}_\Z\Lambda}(\Lambda\langle k\rangle, \Lambda\langle l\rangle)=\Hom_{D(\mathrm{Mod}_\Z\Lambda)}(\Lambda\langle k\rangle, \Lambda\langle l\rangle).
$$
This shows that Lemma \ref{Schwede} applies, hence 
$$R\Hom^{\mathrm{gr}}_{\Qcoh_{\C^*} E}({\cal T},-): D(\Qcoh_{\C^*} E)\to D(\mathrm{Mod}_\Z\Lambda)
$$
is an equivalence as claimed. To prove that this functor restricts to an equivalence
$$D^b(\coh_{\C^*} E)\to D^b(\mathrm{mod}_\Z\Lambda)
$$ 
it suffices to note that 
\begin{enumerate}[(a)]
\item The thick closure of $(\Lambda\langle m\rangle)_{m\in\Z}$ is $D^b(\mathrm{mod}_\Z\Lambda)$. This was proved in Lemma \ref{shifts}.

\item The canonical functor $D^b(\coh_{\C^*} E)\to 	D(\Qcoh_{\C^*} E)$ defines an equivalence between $D^b(\coh_{\C^*} E)$ and the full subcategory ${\cal I}= D_\bdcoh	(\Qcoh_{\C^*}E)$ of $D(\Qcoh_{\C^*} E)$ of complexes with bounded, coherent cohomology. This is Lemma \ref{newLm}.
\item The thick closure of $({\cal T}\langle k\rangle)_{k\in\Z}$ coincides with ${\cal I}$. This was proved in Theorem \ref{prop-old-l2-2}.
\end{enumerate}
 
\end{proof}
%

%
\def\sv{s^{\hskip-0.2ex\scriptscriptstyle \vee}}

\subsection{A tilting version of the Isik-Shipman theorem }

Consider now an element $s\in H$, defining a regular section of $E^\smvee$ with zero locus $Z(s)\subset B$. The element $s$ defines
%
%
also a degree 1   central non zero-divisor  (denoted by the same symbol) in the graded $\C$-algebra
$$\Lambda=\bigoplus_{m\geq 0} H^0(B, {\cal T}_0^\smvee\otimes {\cal T}_0\otimes S^m{\cal E}^\smvee ).
$$

Let $\langle 1\rangle$ be the shift functor on the category of graded right $\Lambda$-modules, and let
$$\sg: \id_{\mathrm{Mod}_\Z\Lambda}\to \langle 1\rangle
$$
be the natural transformation given by multiplication with $s\in \Lambda_1$.  Denote by $\theta$ the character $\id_{\C^*}$, and by $D(\coh_{\C^*} E, \theta, \sv)$ the derived factorization category associated with the 4-tuple $(\C^*,E, \theta, \sv)$ \cite{Hi1}.
We will use the following result
\begin{thry} \label{FirstEq} With the notations and under the assumptions of Theorem \ref{gap}, let  $s\in H^0(B,{\cal E}^\smvee)$ be a regular section. The tilting sheaf ${\cal T}=\pi^*({\cal T}_0)$ induces an equivalence:
$$\tilde {\cal T}_*: D(\coh_{\C^*} E, \theta, \sv)\to D(\mathrm{mod}_\Z\Lambda,\langle 1\rangle, \sg)
$$	
\end{thry}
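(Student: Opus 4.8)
The plan is to upgrade the graded tilting equivalence of Theorem \ref{gap} to the level of derived factorization categories by checking that the tilting functor is compatible with the two pieces of Landau-Ginzburg data — the twist $\langle 1\rangle$ and the potential — and then appealing to the functoriality of Hirano's derived factorization categories \cite{Hi1}, the remaining difficulty being handled by a resolution argument in the spirit of \cite[Theorem 5.1]{BDFIK}. First I would fix the two models. In this formalism a derived factorization category $D({\cal A},T,W)$ is built from an abelian category ${\cal A}$, an autoequivalence $T$, and a natural transformation $W:\id_{\cal A}\to T$; its objects are factorizations $(F_1\xrightarrow{\phi}F_0\xrightarrow{\psi}TF_1)$ with $\psi\phi=W_{F_1}$ and $(T\phi)\psi=W_{F_0}$, and one passes to the homotopy category and localizes at the acyclic factorizations. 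Here the source is ${\cal A}=\coh_{\C^*}E$, $T=\langle 1\rangle$, $W=\sv$, the weight-one function $\sv$ being read as the natural transformation $\id\to\langle 1\rangle$ of multiplication; the target is ${\cal A}=\mathrm{mod}_\Z\Lambda$, $T=\langle 1\rangle$, $W=\sg$, the natural transformation of multiplication by the central element $s\in\Lambda_1$.

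Write $\Phi:=R\Hom^{\gr}_{\Qcoh_{\C^*}E}({\cal T},-)$ for the equivalence $D^b(\coh_{\C^*}E)\simto D^b(\mathrm{mod}_\Z\Lambda)$ of Theorem \ref{gap}. The key verification is that $\Phi$ respects the Landau-Ginzburg data. Compatibility with the twist is contained in formula (\ref{F3}), which gives $\Phi(-\langle 1\rangle)\simeq\Phi(-)\langle 1\rangle$, so $\Phi$ intertwines the two shift autoequivalences. Compatibility with the potential is, in essence, the very definition of $s$: under $\pi_*$ the multiplication operator $\sv\in H^0(B,{\cal E}^\smvee)$ becomes multiplication by $s$ on the weight-graded pieces, and applying $\Phi$ to the morphism $\sv:{\cal T}\to{\cal T}\langle 1\rangle$ yields, through the identifications (\ref{F2})-(\ref{F3}), right multiplication by $s$ on $\Lambda$. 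I would then check that this identification is natural in the object, so that $\Phi$ carries the natural transformation $\sv$ to $\sg$.

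Granting the compatibility, the step I expect to be the main obstacle is to realize $\Phi$ as a functor of factorization categories and to show that it descends to an equivalence of the \emph{derived} factorization categories, the delicate point being that $\Phi$ is a derived — not an abelian — equivalence, so it cannot simply be applied componentwise to an arbitrary factorization. I would circumvent this by reduction to factorizations with distinguished components: on the geometric side every object of $D(\coh_{\C^*}E,\theta,\sv)$ is represented by a factorization whose terms are finite sums of the tilting pieces $({\cal T}\langle k\rangle)_{k\in\Z}$, which classically generate $D^b(\coh_{\C^*}E)$ by Theorem \ref{prop-old-l2-2}; on the algebraic side by a factorization with projective terms $(\Lambda\langle k\rangle)_{k\in\Z}$, available since $\Lambda$ has finite graded global dimension (cf. Lemma \ref{shifts}). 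On these building blocks $\Phi$ restricts to an honest additive equivalence ${\cal T}\langle k\rangle\mapsto\Lambda\langle k\rangle$ with vanishing higher $\Ext$ by (\ref{F2}), so it acts componentwise on such factorizations and the induced $\tilde{\cal T}_*$ is fully faithful and essentially surjective on these generating objects. Finally, since $\Phi$ is exact and an equivalence it commutes with totalization and therefore preserves acyclic factorizations; combined with the generation statements above, this promotes $\tilde{\cal T}_*$ to the claimed equivalence of the Verdier-localized derived factorization categories.
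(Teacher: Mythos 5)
Your overall strategy -- intertwining the twist and the potential and then transporting along the graded tilting equivalence -- is exactly the one the paper intends (its proof is a one-liner: ``the method of proof of \cite[Theorem 5.1]{BDFIK} applies'', resting on Theorem \ref{gap}), and your compatibility checks via (\ref{F2})--(\ref{F3}) are correct. The genuine gap is in the step you yourself flag as the main obstacle. You assert that every object of $D(\coh_{\C^*}E,\theta,s^\smvee)$ is represented by a factorization whose components are finite sums of the pieces ${\cal T}\langle k\rangle$, justified by their classical generation of $D^b(\coh_{\C^*}E)$ (Theorem \ref{prop-old-l2-2}). Classical generation is a statement about iterated cones, shifts and summands in a triangulated category; it gives no componentwise control over factorizations, and the abelian-level resolution that would produce such representatives simply does not exist: a $\C^*$-equivariant coherent sheaf on $E$ need not admit \emph{any} epimorphism from a sum of tilting pieces. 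Concretely, take $B=\P(V^\smvee)$, ${\cal T}_0=\bigoplus_{i=0}^{N-1}{\cal O}(i)$, and the weight-zero sheaf $\pi^*{\cal O}(-m)$ with $m>0$: any morphism $\pi^*{\cal O}(i)\langle k\rangle\to \pi^*{\cal O}(-m)$ lies in $H^0\big(B,S^{-k}{\cal E}^\smvee\otimes{\cal O}(-m-i)\big)$, which forces $-k>0$ since $H^0({\cal O}(-m-i))=0$; a section of positive fibre weight vanishes along the zero section $B\subset E$, so every such map (and every sum of them) has image inside ${\cal I}_B\cdot\pi^*{\cal O}(-m)$ and is never surjective. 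In fact the representation statement you want is, via Corollary \ref{SecondEq}, \emph{equivalent} to the theorem being proved (transport $K(\mathrm{proj}_\Z\Lambda,\langle 1\rangle,s)$ through the inverse equivalence $-\otimes_\Lambda{\cal T}$), so assuming it at the outset is circular.

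The same issue undermines your fully-faithfulness claim: (\ref{F2}) computes $\Ext$ groups of sheaves, whereas morphisms in the derived factorization category between two such factorizations involve a totalization twisted by the potential; to identify them with homotopy classes you would need the geometric analogue of \cite[Corollary 2.25]{BDFIK} for tilting-piece components, which is again essentially the statement to be proved. The non-circular route -- the one the paper's citation encodes -- is to derive the abelian-level adjoint pair $\big(\Hom^{\gr}_{\Qcoh_{\C^*}E}({\cal T},-),\,-\otimes_\Lambda{\cal T}\big)$ at the factorization level using the resolution machinery of \cite{BDFIK} (replacement by factorizations with injective quasi-coherent components on the sheaf side, projective or flat components on the module side; the Noetherian hypothesis on $\Lambda$ is what allows one to stay within $\mathrm{mod}_\Z\Lambda$), and then to deduce that the unit and counit of the derived adjunction are isomorphisms from the plain graded tilting equivalence of Theorem \ref{gap}, applied componentwise or by truncation. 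Your twist/potential compatibility computations then find their natural place inside that derived adjunction, rather than in a reduction to tilting-piece factorizations that cannot be carried out directly.
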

\begin{proof}
Since $\Lambda$ is a Noetherian, the method of proof of \cite[Theorem 5.1]{BDFIK} applies. Note however that this proof needs  Theorem \ref{gap}, which is our $\C^*$-equivariant version of Geometric Tilting Theory. \end{proof}
In  the equivalence given by Theorem \ref{FirstEq} one can substitute the derived factorization category $D(\mathrm{mod}_\Z\Lambda,\langle 1\rangle, \sg)$ by  the homotopy category $K(\mathrm{proj}_\Z\Lambda,\langle 1\rangle, \sg)$ of factorisations whose components are finitely generated projective graded $\Lambda$-modules. This is an important progress, because the morphisms in the  category  $K(\mathrm{proj}_\Z\Lambda,\langle 1\rangle, \sg)$ are just homotopy classes of morphisms of factorisations.  The precise statement is
\begin{co} \label{SecondEq}
 With the notations and under the assumptions of Theorem \ref{gap} suppose also that $s\in H^0(B,{\cal E}^\smvee)$ is a regular section. Then the 	sheaf ${\cal T}=\pi^*({\cal T}_0)$ induces an equivalence
$$D(\coh_{\C^*} E,  \theta, \sv)\to K(\mathrm{proj}_\Z\Lambda,\langle 1\rangle, \sg).
$$	

\end{co}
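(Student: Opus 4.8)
The plan is to deduce the statement from Theorem \ref{FirstEq} by a purely algebraic comparison, on the module side, of the derived factorization category $D(\mathrm{mod}_\Z\Lambda,\langle 1\rangle,\sg)$ with the homotopy category $K(\mathrm{proj}_\Z\Lambda,\langle 1\rangle,\sg)$. Concretely, I would exhibit the natural functor
$$\Phi: K(\mathrm{proj}_\Z\Lambda,\langle 1\rangle,\sg)\to D(\mathrm{mod}_\Z\Lambda,\langle 1\rangle,\sg),$$
obtained from the inclusion $\mathrm{proj}_\Z\Lambda\hookrightarrow\mathrm{mod}_\Z\Lambda$ of factorizations, composed with the Verdier localization of the homotopy category $K(\mathrm{mod}_\Z\Lambda,\langle 1\rangle,\sg)$ at the subcategory of absolutely acyclic factorizations, this localization being by definition the derived factorization category (\cite{Hi1}). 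Once $\Phi$ is shown to be an equivalence, the functor asserted in the corollary is the composite $\Phi^{-1}\circ\tilde{\cal T}_*$, where $\tilde{\cal T}_*$ is the equivalence of Theorem \ref{FirstEq}.

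The decisive input is that $\Lambda$ has finite graded global dimension (Theorem \ref{gap}). I would use this first to prove that factorizations with finitely generated projective components are homotopy-projective, that is, $\Hom_{K(\mathrm{mod}_\Z\Lambda,\langle 1\rangle,\sg)}(P,A)=0$ for every such $P$ and every acyclic factorization $A$. This is the two-periodic analogue of the classical fact that over a ring of finite global dimension a complex of projectives is homotopy-projective; the finiteness of the graded global dimension is exactly what controls the otherwise unbounded \emph{unfolding} of a factorization and forces the vanishing. Granting this orthogonality, the standard localization calculus of a Verdier quotient shows that $\Phi$ identifies homotopy classes of morphisms of projective factorizations with morphisms in $D(\mathrm{mod}_\Z\Lambda,\langle 1\rangle,\sg)$, so $\Phi$ is fully faithful.

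For essential surjectivity I would show that every factorization $(M_1\to M_0\to M_1\langle 1\rangle)$ of finitely generated graded $\Lambda$-modules receives a quasi-isomorphism from a factorization with finitely generated projective components. Since $\mathrm{mod}_\Z\Lambda$ has enough finitely generated graded projectives, such a projective factorization is produced by the usual resolution procedure for factorizations; here finiteness of the graded global dimension is what guarantees that, after folding and truncation, the resolution can be arranged to have \emph{finitely generated} projective components, so that it is a genuine object of $K(\mathrm{proj}_\Z\Lambda,\langle 1\rangle,\sg)$ rather than only of a homotopy category of the second kind with possibly infinitely generated terms. Its image under $\Phi$ is then isomorphic in $D(\mathrm{mod}_\Z\Lambda,\langle 1\rangle,\sg)$ to the class of the original factorization, whence $\Phi$ is essentially surjective.

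The main obstacle is the careful handling of the "second kind" nature of these factorization categories: acyclicity must be understood in the absolute (equivalently, co-) derived sense, and both the homotopy-projectivity of projective factorizations and the termination of projective resolutions hinge on the finiteness of the graded global dimension. Rather than reproving these facts in the 2-periodic setting from scratch, I expect the cleanest route is to invoke the resolution theorems of \cite{BDFIK} for factorization categories over an abelian category of finite global dimension, applied to ${\cal A}=\mathrm{mod}_\Z\Lambda$ with the central potential $\sg$: the hypotheses hold by Theorem \ref{gap}, and the conclusion is precisely that $\Phi$ is an equivalence, which completes the argument.
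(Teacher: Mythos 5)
Your proposal is correct and takes essentially the same route as the paper: the paper's proof of Corollary \ref{SecondEq} likewise composes the equivalence $\tilde{\cal T}_*$ of Theorem \ref{FirstEq} with the identification $K(\mathrm{proj}_\Z\Lambda,\langle 1\rangle,\sg)\simeq D(\mathrm{mod}_\Z\Lambda,\langle 1\rangle,\sg)$, obtained by invoking \cite[Corollary 2.25, p.~210]{BDFIK} on the strength of the finite graded global dimension of $\Lambda$ established in Theorem \ref{gap}. Your intermediate sketch of how that equivalence is proved (orthogonality of projective factorizations against acyclic ones for full faithfulness, finitely generated projective resolutions of factorizations for essential surjectivity) is an accurate unpacking of the cited BDFIK result, but the paper, like your final paragraph, simply quotes it.
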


\begin{proof}
As $\Lambda$ has finite graded global dimension,  \cite[Corollary 2.25, p. 210]{BDFIK} applies  and yields an equivalence 
$$K(\mathrm{proj}_\Z\Lambda,\langle 1\rangle, \sg)\simeq D(\mathrm{mod}_\Z\Lambda,\langle 1\rangle, \sg).
$$
\end{proof}

Finally we will identify the homotopy category $K(\mathrm{proj}_\Z\Lambda,\langle 1\rangle, \sg)$ with the triangulated graded singularity category of the quotient algebra $\Lambda/s\Lambda$.
  
\begin{pr} \label{Orlov-newProp} Under the assumptions and with the notations of Theorem \ref{gap} suppose also that $s\in H^0(B,{\cal E}^\smvee)$ is a regular section. Then there exists a natural equivalence 
 \begin{equation}\label{Orlov-new}
\Phi:  K(\mathrm{proj}_\Z\Lambda,\langle 1\rangle, \sg)\textmap{\simeq}  D^{\mathrm{gr}}_{\mathrm{sg}}(\Lambda/s\Lambda).	
 \end{equation}
 \end{pr}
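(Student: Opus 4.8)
The plan is to realize $\Phi$ as a \emph{cokernel functor}, following the Eisenbud--Buchweitz--Orlov correspondence between matrix factorizations and singularity categories, and to establish its properties along the lines of \cite[Theorem 3.10]{Or}, modified so as not to use connectedness of $\Lambda$. Write $A\coloneqq \Lambda/s\Lambda$ and $D^{\mathrm{gr}}_{\mathrm{sg}}(A)=D^b(\mathrm{mod}_\Z A)/D^b(\mathrm{proj}_\Z A)$, the quotient by the perfect complexes. A graded matrix factorization is a datum $\mathcal{P}=(P_1\xrightarrow{p_1}P_0\xrightarrow{p_0}P_1\langle 1\rangle)$ of finitely generated projective graded $\Lambda$-modules with $p_0 p_1=s\cdot\id$ and $(p_1\langle 1\rangle)p_0=s\cdot\id$. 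The second identity shows $s\,P_0\subseteq \im(p_1)\langle 1\rangle$, so $s$ annihilates $\coker(p_1)$ and the latter is a finitely generated graded $A$-module. I would set $\Phi(\mathcal{P})\coloneqq \coker(p_1)$, viewed in $D^b(\mathrm{mod}_\Z A)$ and then in its Verdier quotient $D^{\mathrm{gr}}_{\mathrm{sg}}(A)$.

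First I would check that $\Phi$ descends to the homotopy category and is triangulated. A null-homotopic morphism of factorizations induces on cokernels a map factoring through a projective $A$-module, hence a zero morphism in $D^{\mathrm{gr}}_{\mathrm{sg}}(A)$; this makes $\Phi$ well defined on $K(\mathrm{proj}_\Z\Lambda,\langle 1\rangle,\sg)$. The suspension of $\mathcal{P}$ in the homotopy category is, up to the twist $\langle 1\rangle$, the rotated factorization $(P_0\xrightarrow{p_0}P_1\langle 1\rangle\xrightarrow{p_1\langle 1\rangle}P_0\langle 1\rangle)$, and one reads off from the defining identities that $\coker(p_0)$ is a cosyzygy of $\coker(p_1)$, which realizes the shift $[1]$ of $D^{\mathrm{gr}}_{\mathrm{sg}}(A)$; this yields a natural isomorphism $\Phi\circ[1]\cong[1]\circ\Phi$. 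Finally, the mapping cone of a morphism of factorizations produces, on cokernels, a short exact sequence of $A$-modules up to projective summands, hence a distinguished triangle in $D^{\mathrm{gr}}_{\mathrm{sg}}(A)$; thus $\Phi$ is exact.

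For essential surjectivity I would use a stabilization argument resting on $\mathrm{gldim}\,\Lambda=n<\infty$. Every object of $D^{\mathrm{gr}}_{\mathrm{sg}}(A)$ is isomorphic, up to shift, to a finitely generated graded $A$-module $M$. Viewed over $\Lambda$, the module $M$ has a finite graded projective resolution; since $s$ annihilates $M$ and is a central non-zero-divisor on each projective term, multiplication by $s$ is null-homotopic on this resolution, and a choice of contracting homotopy lets me fold the resolution into a matrix factorization $\mathcal{P}_M$ of $s$ whose cokernel is the high syzygy $\Omega^n_A M$. Because $\Omega^n_A M\cong M[-n]$ in $D^{\mathrm{gr}}_{\mathrm{sg}}(A)$, this exhibits $M$, up to shift, in the essential image of $\Phi$. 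For full faithfulness I would compare morphism groups through stable homomorphisms: homotopy classes of factorization morphisms are naturally identified with stable maps $\underline{\Hom}_A$ between the corresponding cokernels, while the cokernel of a matrix factorization carries a complete resolution, so it is a Gorenstein-projective graded $A$-module, and for such modules $\Hom_{D^{\mathrm{gr}}_{\mathrm{sg}}(A)}$ agrees with $\underline{\Hom}_A$ by Buchweitz's theorem \cite{BLVdB2}. Here I would invoke that $A$ is Iwanaga--Gorenstein, since $\Lambda$ has finite global dimension, hence finite self-injective dimension, and passage to the quotient by the regular central element $s$ preserves this property.

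The main obstacle, and the reason a separate argument is needed, is to carry the folding and the Hom-comparison through \emph{without} the hypothesis $\Lambda_0=\C$. Orlov's original proof uses connectedness in the structure of minimal graded resolutions and in the vanishing statements that stabilize $\Hom$; I would replace these by their analogues over the base ring $\Lambda_0$, using that $\Lambda_0$ is a finite-dimensional, hence semiperfect, $\C$-algebra, so that $\mathrm{mod}_\Z\Lambda$ is Krull--Schmidt and admits minimal projective resolutions (projective covers). Verifying that the Shamash-type folding of a finite $\Lambda$-resolution into an eventually $2$-periodic $A$-resolution, together with the stabilization of morphism groups, indeed survives over this non-field base is the technically delicate point on which the whole equivalence rests.
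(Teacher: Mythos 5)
Your skeleton --- the cokernel functor on graded factorizations, Iwanaga--Gorenstein descent from $\Lambda$ to $A=\Lambda/s\Lambda$, and Buchweitz-type stabilization --- is essentially the paper's, which routes everything through Orlov: $\mathrm{DGrB}(s)$ is precisely the full subcategory of $K(\mathrm{proj}_\Z\Lambda,\langle 1\rangle,\sg)$ on factorizations with \emph{free} components, Orlov's cokernel functor \cite[Proposition 3.5]{Or} extends verbatim to projective components, the full-faithfulness argument of \cite[Proposition 3.9]{Or} (which is exactly your asserted identification of homotopy classes with stable maps) applies unchanged, and your derivation of the Iwanaga--Gorenstein property of $A$ matches the paper's ingredient (I1), proved there via the change-of-rings spectral sequences and $0\to\Lambda\xrightarrow{\,s\,}\Lambda\to A\to 0$. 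However, one step of your plan fails as written: a single contracting homotopy $\sigma$ for multiplication by $s$ does \emph{not} fold a finite $\Lambda$-resolution into a matrix factorization, since $(d+\sigma)^2=s\cdot\mathrm{id}+\sigma^2$ and $\sigma^2$ need not vanish. You either need a full Shamash system of higher homotopies, or --- Orlov's actual route, and the cleaner one --- you first replace the object by a module $M$ with $\Ext^i_A(M,A)=0$ for $i>0$ (a high syzygy; this is where (I1) and the self-duality (I2) of \cite[Lemma 5.3]{BuSt} are genuinely consumed), deduce $\Ext^i_\Lambda(M,\Lambda)=0$ for $i\geq 2$ from the collapsing spectral sequence, hence $\mathrm{pd}_\Lambda M\leq 1$, and fold the resulting \emph{two-term} resolution, where a single homotopy does suffice.

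The more consequential problem is that you defer the declared crux (``survives over this non-field base'') to an unperformed verification, and you aim the wrong tool at it. Connectedness enters Orlov's proof of \cite[Theorem 3.10]{Or} in exactly one place: having constructed a factorization with $K^0$ finitely generated free and $K^{-1}$ merely finitely generated projective, he uses connectedness of $\Lambda/s\Lambda$ (graded Nakayama) to upgrade $K^{-1}$ to a free module. Since the source of your equivalence is $K(\mathrm{proj}_\Z\Lambda,\langle 1\rangle,\sg)$ rather than $\mathrm{DGrB}(s)$, this step is simply void: the factorization as constructed already lies in the projective category. Consequently no minimal resolutions, projective covers, or Krull--Schmidt theory over the semiperfect ring $\Lambda_0$ are needed anywhere --- neither in essential surjectivity, whose only non-formal inputs are (I1) and (I2) (both valid for arbitrary non-connected Noetherian $\Lambda$), nor in full faithfulness. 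Your ``technically delicate point'' thus dissolves under this structural observation instead of requiring the heavier machinery you propose; the paper's proof is exactly this observation plus the connectedness-free verification of (I1) and (I2). A minor point: the reference for Buchweitz's stable-category theorem should be \cite{Bu} (or \cite{BuSt} in the graded, non-connected setting), not \cite{BLVdB2}.
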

\begin{proof}  
This equivalence is obtained using  a version of Orlov's Theorem \cite[Theorem 3.10]{Or}. Orlov's result gives an equivalence 
$$F:\mathrm{DGrB}(s)\to D^{\mathrm{gr}}_{\mathrm{sg}}(\Lambda/s\Lambda),$$  
where: 
\begin{enumerate}
\item 	$\Lambda=\bigoplus_{i\geq 0}\Lambda_i$ is a {\it connected}, Noetherian algebra of finite global dimension over a field $K$.
\item $s\in\Lambda_n$ is a homogeneous,  central element of positive degree  which is not a zero divisor.
\item $s\Lambda=\Lambda s$ is the two-sided ideal generated by $s$.
\item $\mathrm{DGrB}(s)$ is the triangulated category of graded brains of type $B$ associated with the pair $(\Lambda, s)$ \cite[section 3.1]{Or}. 
\item $D^{\mathrm{gr}}_{\mathrm{sg}}(\Lambda/s\Lambda)$ denotes the graded singularity category of the graded quotient algebra $\Lambda/s\Lambda$.
\end{enumerate}

The construction of the equivalence $\Phi$ starts with the following remark: Orlov's category $\mathrm{DGrB}(s)$ coincides with the full subcategory of $K(\mathrm{proj}_\Z\Lambda,\langle 1\rangle, \sg)$ whose objects are factorizations with (finitely generated) {\it free} graded right $\Lambda$-modules. Moreover, Orlov's functor  $F:\mathrm{DGrB}(s)\to D^{\mathrm{gr}}_{\mathrm{sg}}(\Lambda/s\Lambda)$ defined in \cite[Proposition 3.5]{Or} extends in an obvious way to a functor $\Phi:K(\mathrm{proj}_\Z\Lambda,\langle 1\rangle, \sg)\to   D^{\mathrm{gr}}_{\mathrm{sg}}(\Lambda/s\Lambda)$.

The arguments of \cite[Proposition 3.9]{Or} also apply to the extension $\Phi$, proving that this functor is fully faithful as well. 

The proof is completed by noting that, for an arbitrary (not-necessarily connected) Noetherian algebra $\Lambda$,  the extension $\Phi$ is always essentially surjective.  We indicate briefly the necessary changes to Orlov's proof:
the proof of essential surjectivity in \cite[Proposition 3.10]{Or} is obtained in two steps. First, for an object $T$ in $D_\mathrm{sg}^{\gr}(\Lambda/s\Lambda)$, he obtains a factorization 
\begin{equation}\label{KK}
K^{-1}\textmap{k^{-1}} K^0\textmap{k^0}  K^{-1}(n)
\end{equation}
with $K^0$ free finitely generated and $K^{-1}$ projective finitely generated, which is mapped to $T$ via $\Phi$. Second, using the connectedness of $\Lambda/s\Lambda$, he proves that $K^{-1}$ is  free as well. 

For the essential surjectivity of $\Phi$ the second step is no longer necessary, so it suffices to check that the construction of (\ref{KK}) does not need the connectedness of $\Lambda$. The key ingredients used in this construction    are:
\begin{enumerate}[({I}1)]
\item If $\Lambda$ has finite injective dimension, then $\Lambda/s\Lambda$ has finite injective dimension.

In non-commutative algebra the condition ``$A$ has finite injective dimension" means: ``$A$ has finite injective dimension as both right and left module over itself".  A ring satisfying this condition is called Iwanaga-Gorenstein. \vspace{2mm}
\item The quotient algebra $A\coloneqq \Lambda/s\Lambda$ is a dualizing complex over itself, i.e. the functors
$$D\coloneqq R\Hom_{\mod_\Z A}(-,A):D^b(\mod_\Z A)\to D^b(\mod_\Z A^\op)^\op,$$
$$D\coloneqq  R\Hom_{\mod_\Z A}(-,A):D^b(\mod_\Z A^\op)\to D^b(\mod_\Z A)^\op
$$
are quasi-inverse equivalences.	
\end{enumerate}
(I1) follows using the spectral sequences with second pages
  $$\Ext_A^p(M,\Ext^q_\Lambda(A,\Lambda))\Rightarrow \Ext^n_\Lambda(M,\Lambda),\ \Ext_{A^\op}^p(M,\Ext^q_{\Lambda^\op}(A,\Lambda))\Rightarrow \Ext^n_{\Lambda^\op}(M,\Lambda)
  $$
associated with an $A$-module (respectively $A^\op$-module) $M$   \cite[p. 349]{CaE}, and the short exact sequence
$$0\to \Lambda \textmap{s}\Lambda\to A\to 0
$$
to compute $\Ext^q_\Lambda(A,\Lambda)$, $\Ext^q_{\Lambda^\op}(A,\Lambda)$.
\\ \\ 
(I2) is stated in \cite[Lemma 5.3]{BuSt}. The proof uses only  the assumption  ``$A$ is Iwanaga-Gorenstein".
\end{proof}

Combining   Corollary \ref{SecondEq} and Proposition \ref{Orlov-newProp} with the Isik-Shipman theorem \cite{Is}, \cite{Sh}, one obtains the following result, which gives a tilting description of the derived category of the zero locus of a regular section $s$:

\begin{thry} \label{FourthEq} Let $B$ be a smooth projective variety over $\C$, $H$ a finite dimensional complex vector space, and 	 
$$
\begin{tikzcd}[row sep=7ex]
E \ar[r, hook, "i"]  \ar[rr, "\pi", bend left=28, two heads ] &	B\times H^\smvee \ar[r, two heads, "p"] &B
\end{tikzcd}
$$
a sub-bundle of the trivial bundle $B\times H^\smvee$ over $B$. 
Let ${\cal T}_0$ be a locally free sheaf on $B$  classically  generating $D^b(\coh B)$, such that ${\cal T}\coloneqq \pi^*({\cal T}_0)$ is a tilting sheaf on $E$. Set $\Lambda\coloneqq \End_E({\cal T})$, and let $s\in H$ be an element defining a regular section.   Then one  has an equivalence of triangulated categories:
$$ D^b(\coh Z(s)) \to D^{\mathrm{gr}}_{\mathrm{sg}}(\Lambda/s\Lambda).
$$	
\end{thry}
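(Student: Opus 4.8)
The plan is to obtain the equivalence as a concatenation of three equivalences already established in this section, together with the Isik-Shipman theorem. Because the hypotheses of the present statement are verbatim those of Theorem \ref{gap}, every intermediate result applies without modification, so the only work is to check that the categories match up along the chain.

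First I would invoke the Isik-Shipman theorem for the gauged Landau-Ginzburg model $(\C^*, E, \theta, s^\smvee)$ associated with the regular section $s$. Its hypotheses hold in our setting: $B$ is smooth projective, $E$ carries the fibrewise scaling $\C^*$-action, $\theta = \id_{\C^*}$, and $s$ is regular. It therefore yields $D^b(\coh Z(s)) \simeq D(\coh_{\C^*} E, \theta, s^\smvee)$. Next I would apply Corollary \ref{SecondEq}, whose assumptions are exactly those of Theorem \ref{gap} together with the regularity of $s$, to identify the derived factorization category with the homotopy category of graded matrix factorizations, $D(\coh_{\C^*} E, \theta, s^\smvee) \simeq K(\mathrm{proj}_\Z\Lambda, \langle 1\rangle, \sg)$; this is where the $\C^*$-equivariant tilting equivalence of Theorem \ref{gap} (via Theorem \ref{FirstEq}) and the finite graded global dimension of $\Lambda$ are used. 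Finally I would apply Proposition \ref{Orlov-newProp}, again under the same hypotheses, to obtain $K(\mathrm{proj}_\Z\Lambda, \langle 1\rangle, \sg) \simeq D^{\mathrm{gr}}_{\mathrm{sg}}(\Lambda/s\Lambda)$. Composing the three equivalences produces the asserted equivalence $D^b(\coh Z(s)) \to D^{\mathrm{gr}}_{\mathrm{sg}}(\Lambda/s\Lambda)$.

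The main obstacle is not in this final assembly, which is purely formal, but has already been overcome in the preceding results. The genuine difficulties lie in establishing the equivariant geometric tilting equivalence (Theorem \ref{gap}) and, most delicately, in extending Orlov's comparison theorem to the non-connected algebra $\Lambda$ (Proposition \ref{Orlov-newProp}), since the classical form of that theorem requires $\Lambda$ to be connected. Here the only point deserving attention is to confirm that the external Isik-Shipman input genuinely applies to the specific model $(\C^*, E, \theta, s^\smvee)$ arising from a regular section; granting that, the statement follows by concatenation of the displayed equivalences.
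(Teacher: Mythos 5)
Your proposal is correct and coincides with the paper's own proof: the theorem is obtained there precisely by concatenating the Isik-Shipman equivalence $D^b(\coh Z(s))\simeq D(\coh_{\C^*}E,\theta,s^\smvee)$ with Corollary \ref{SecondEq} and Proposition \ref{Orlov-newProp}, exactly as you describe. Your remarks correctly locate the substantive work in Theorem \ref{gap} and in the non-connected extension of Orlov's comparison theorem, which is also how the paper presents the matter.
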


\subsection{\texorpdfstring{$\C^*$-}{t2}equivariant non-commutative resolutions}\label{Cequiv}

\def\gldim{\mathrm{gldim}}

Let $R$ be a normal, Noetherian domain. We recall (\cite {VdB1}, \cite[sect. 1.1.1]{SVdB}) that a non-commutative  ({\rm nc}) resolution of $R$ is an $R$-algebra of the form $\Lambda=\End_R(M)$, where  $M$ is  a  non-trivial  finitely generated reflexive $R$-module, 	such that $\gldim(\Lambda)<\infty$. A   non-commutative resolution $\Lambda$  is called crepant  if $R$ is a Gorenstein ring, and $\Lambda$ is a  maximal Cohen-Macaulay (MCM) $R$-module.

\begin{dt} 
Let $R$ be a non-negatively graded, normal, Noetherian domain with $R_0=\C$.
A   non-negatively graded  $R$-algebra $\Lambda$ will be called a  graded (crepant)  {\rm nc} resolution  of $R$ if $\Lambda$ is a (crepant)  {\rm nc} resolution in the classical (non-graded) sense. 
\end{dt}

This definition is justified by \cite[Proposition 2.4]{SVdB}, which shows in particular that, denoting by $\mg=\bigoplus_{k>0} R_k\subset R$  the augmentation ideal of $R$,  if
$\Lambda$ is  a  graded (crepant)  {\rm nc} resolution  of $R$, then the $\mg$-completion completion $\hat\Lambda_m$ is a (crepant)  {\rm nc} resolution  of $\hat R_\mg$. Moreover, as we will see in this section, this notion becomes natural in the framework of a Kempf collapsing map (see Lemma \ref{l6} below).

\begin{re} \label{newRe} Suppose that $\Lambda=\End_R(M)$ where $M$ is a non-trivial non-negatively graded finitely generated  $R$-module  which is reflexive  in the non-graded sense, and the inclusion 
$$\bigoplus_{k=0}^\infty \Hom^k(M,M)\subset \End_R(M)
$$
is an equality. Then $\End_R(M)$ endowed with its natural non-negative grading is a graded  {\rm nc} resolution  of $R$, which is crepant if $R$ is Gorenstein ring and $\End_R(M)$ is  an MCM $R$-module.
\end{re}

Let $B$ be a smooth projective variety over $\C$, $H$ a finite dimensional complex vector space, and 
$$
\begin{tikzcd}[row sep=7ex]
E \ar[r, hook, "i"]  \ar[rr, "\pi", bend left=28, two heads ] &	B\times H^\smvee \ar[r, two heads, "p"] &B
\end{tikzcd}
$$
a   vector subbundle of the trivial bundle $B\times H^\smvee$ over $B$. Let $p:B\times H^\smvee\to B$, $q:B\times H^\smvee\to H^\smvee $ be the projections on the two factors,  $\pi=p\circ i$ be the bundle projection of $E$, $\vartheta\coloneqq  q\circ i$, and   $C(E)\coloneqq (q\circ i)(E)$  the  image of the {\it proper} morphism $\vartheta$, endowed with its reduced induced scheme structure. We obtain a surjective projective morphism $\rho:E\to C(E)$ fitting in the commutative diagram:
\begin{equation}\label{DiagramC(E)}
\begin{tikzcd}[row sep=7ex, column sep=7ex]
&[-22pt]E \ar[r, hook, "i"] \ar[dr, "\vartheta"]   \ar[d, "\rho"', two heads] \ar[rr, "\pi", bend left=28, two heads ] &	B\times H^\smvee \ar[d, two heads, "q"]\ar[r, "p"] & B\\ 
\Spec(R)\ar[r, equal]&C(E) \ar[r, hook, "j"] & H^\smvee &
\end{tikzcd}
\end{equation}
\begin{re}\label{AffineConeRem}
Let $E\hookrightarrow B\times H^\smvee$ be  
a   vector subbundle of the trivial bundle $B\times H^\smvee$ over $B$. Then $C(E)$ coincides with the affine cone over the projective variety
$$S(E)\coloneqq  \P(\vartheta)(\P(E))\subset \P(H^\smvee).
$$
\end{re}

The scaling $\C^*$-action on $H^\smvee$ induces  $\C^*$-actions on $E$ and $C(E)$, and $\rho$ is a $\C^*$-equivariant projective morphism.  Since $C(E)$ is $\C^*$-invariant, the associated ideal $I_{C(E)}\subset \C[H^\smvee]$ is homogeneous, so the ring $\C[C(E)]=\C[H^\smvee]/I_{C(E)}$ comes with a natural grading induced by the standard grading of  $\C[H^\smvee]=S^*H$. Put $R\coloneqq \C[C(E)]$. 
%
%
\\

{\it In this section, from here on, we will make the following assumptions: we will suppose that $B$ is connected, $\rho$ is birational, and $C(E)$ is normal.}\\

We will  denote by ${\cal E}$ the locally free sheaf on $B$ associated with $E$.
\begin{lm}\label{l5} One has an isomorphism of graded rings
$$R\simeq \bigoplus_{m\geq 0} H^0(B, S^m {\cal E}^\smvee).
$$	
\end{lm}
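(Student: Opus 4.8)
The plan is to realize both sides as rings of global regular functions and then to compare them through the collapsing map $\rho$. First I would record that, since $\pi$ is affine with $\pi_*{\cal O}_E=\bigoplus_{m\ge 0}S^m{\cal E}^\smvee$ (exactly the computation already used in Corollary \ref{co3} (3)), and since $B$ is a Noetherian scheme so that taking global sections commutes with this direct sum, one has a canonical identification of graded rings
$$\bigoplus_{m\ge 0}H^0(B,S^m{\cal E}^\smvee)=H^0\big(B,\pi_*{\cal O}_E\big)=\Gamma(E,{\cal O}_E),$$
where the grading on the right is the one induced by the fibrewise scaling $\C^*$-action, whose weight-$m$ eigenspace is precisely $H^0(B,S^m{\cal E}^\smvee)$ (this is the weight decomposition of $\pi_*$ exploited in Theorem \ref{prop-old-l2-2}). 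Thus the lemma reduces to producing a graded isomorphism $R\simeq \Gamma(E,{\cal O}_E)$.

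Next I would use that $C(E)=\Spec(R)$ is affine, so the morphism $\rho\colon E\to C(E)$ corresponds to a ring homomorphism $\rho^\#\colon R=\Gamma(C(E),{\cal O}_{C(E)})\to\Gamma(E,{\cal O}_E)$; because $\rho$ is $\C^*$-equivariant and the grading on $R$ is the one coming from the scaling action on $H^\smvee$, this map is a homomorphism of graded rings. Injectivity is immediate: $\rho$ is surjective onto the reduced scheme $C(E)$, so a function killed by $\rho^\#$ vanishes on all of $C(E)$ and is therefore $0$ in $R$. The entire substance of the lemma is the surjectivity of $\rho^\#$.

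The core step, providing that surjectivity, is the identification $\rho_*{\cal O}_E={\cal O}_{C(E)}$. Here I would invoke that $B$ is connected and smooth, hence irreducible, so that $E$ is an integral scheme; that $\rho$ is projective, in particular proper, and birational by hypothesis; and that $C(E)$ is normal. Consequently $\rho_*{\cal O}_E$ is a coherent sheaf of ${\cal O}_{C(E)}$-algebras by properness, it sits inside the common field of rational functions $\C(E)=\C(C(E))$ by birationality, and it is integral over ${\cal O}_{C(E)}$ since coherence makes it module-finite. As $C(E)$ is normal, ${\cal O}_{C(E)}$ is integrally closed in $\C(C(E))$, forcing $\rho_*{\cal O}_E\subseteq{\cal O}_{C(E)}$; the reverse inclusion is automatic. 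This is the standard normal form of Zariski's main theorem for proper birational morphisms onto a normal target.

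Finally I would take global sections over the affine scheme $C(E)$:
$$R=\Gamma(C(E),{\cal O}_{C(E)})=\Gamma(C(E),\rho_*{\cal O}_E)=\Gamma(E,{\cal O}_E),$$
and observe that this composite is exactly $\rho^\#$, so $\rho^\#$ is the desired graded isomorphism; combined with the first paragraph this yields $R\simeq\bigoplus_{m\ge 0}H^0(B,S^m{\cal E}^\smvee)$ as graded rings. I expect the only genuine obstacle to be the justification of $\rho_*{\cal O}_E={\cal O}_{C(E)}$ --- that is, checking that the hypotheses (properness, integrality, normality) genuinely put us in the scope of Zariski's main theorem --- together with the routine but necessary bookkeeping that each identification above respects the $\Z$-gradings.
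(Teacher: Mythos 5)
Your proposal is correct and follows essentially the same route as the paper: the key step in both is $\rho_*{\cal O}_E={\cal O}_{C(E)}$, deduced from $\rho$ being proper and birational onto the normal variety $C(E)$, after which one takes global sections over the affine scheme $C(E)$ and identifies $H^0(E,{\cal O}_E)=H^0(B,\pi_*{\cal O}_E)=\bigoplus_{m\geq 0}H^0(B,S^m{\cal E}^\smvee)$, with the gradings matching because $\C^*$ acts with weight $m$ on $H^0(B,S^m{\cal E}^\smvee)$. Your write-up merely makes explicit what the paper leaves implicit (the Zariski-main-theorem justification of $\rho_*{\cal O}_E={\cal O}_{C(E)}$, which uses the integrality of $E$ coming from the standing connectedness assumption on $B$).
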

\begin{proof}
Since $C(E)=\Spec(R)$ is normal and $\rho$ is proper and birational, we have $\rho_*({\cal O}_E)={\cal O}_{\Spec(R)}$. Therefore
$$R=H^0(\Spec(R),{\cal O}_{\Spec(R)})=H^0(\Spec(R),\rho_*({\cal O}_E))=H^0(E,{\cal O}_E)$$
$$=H^0(B,\pi_*({\cal O}_E))=H^0(B, \bigoplus_{m\geq 0}S^m{\cal E}^\smvee).
$$
The gradings of $R$ and $\bigoplus_{m\geq 0} H^0(B, S^m {\cal E}^\smvee)$ agree via this isomorphism, because $\C^*$ acts with weight $m$ on $H^0(B,S^m{\cal E}^\smvee)$. 	
\end{proof}


\begin{re}\label{NewRemarkNew}
In section \ref{HomGLSMSect}	 we will study a large class of bundles $E$ which fit in a diagram of the form (\ref{DiagramC(E)}) with $H=H^0(B,E^\smvee)$, which satisfy the assumptions of Lemma	 \ref{l5}. These examples will be obtained using homogeneous GLSM presentations.
\end{re}

From now   let  ${\cal T}_0$ be  a locally free sheaf on $B$  classically generating $D^b(\coh B)$, which satisfies the hypothesis of Corollary \ref{co3} (3). Therefore, by this corollary, we know that ${\cal T}\coloneqq \pi^*({\cal T}_0)$ is a tilting sheaf on $E$.
Put $\Lambda\coloneqq \End_E({\cal T})$. Geometric tilting theory (see \cite[sect. 7.6]{HVdB}, \cite[sect. 1.8, 1.9]{BH}, \cite[sect. D]{Le}) implies
\begin{enumerate} 
\item $\Lambda$ is a finite $R$-algebra,  finitely generated as a $\C$-algebra.
\item $\Lambda$ has finite global dimension.
\end{enumerate}
Note also that $\Lambda$ has a natural grading given by the direct sum decomposition:
\begin{equation}\label{Lambda-grading}
\Lambda=H^0(E,\pi^*({\cal T}_0^\smvee\otimes{\cal T}_0))=H^0\big(B, {\cal T}_0^\smvee\otimes{\cal T}_0\otimes (\bigoplus_{m\geq 0}S^m{\cal E}^\smvee)\big).  	
\end{equation}
We are interested in criteria which guarantee that $\Lambda$ is a graded (crepant)  non-commutative resolution of $R$. Let $M$ be the graded $R$-module
$$M\coloneqq  H^0(E,{\cal T})=H^0\big(B,{\cal T}_0\otimes (\bigoplus_{m\geq 0}S^m{\cal E}^\smvee)\big).
$$
\begin{lm}\label{l6} Suppose  that   $\Lambda\coloneqq \End_E({\cal T})$ is a reflexive $R$-module, and ${\cal O}_B$ is a direct summand of ${\cal T}_0$. If the exceptional locus $\mathrm{Exc}(\rho)\subset {C(E)}$ of $\rho$ has codimension $\geq 2$, then $\Lambda$ is   a graded \rm{nc} resolution of $R$.
\end{lm}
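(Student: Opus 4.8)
The plan is to exhibit $\Lambda=\End_E({\cal T})$ as $\End_R(M)$ for the module $M=H^0(E,{\cal T})$, and then to read off the three defining properties of a graded nc resolution. Recall that under the standing assumptions ($B$ connected, $\rho$ birational, $C(E)$ normal) the ring $R=\C[C(E)]$ is a normal Noetherian domain, that the grading of $\Lambda$ is non-negative by (\ref{Lambda-grading}), and that $\gldim(\Lambda)<\infty$ by geometric tilting theory. Thus everything reduces to producing a graded $R$-algebra isomorphism $\Lambda\cong\End_R(M)$ with $M$ a non-trivial, finitely generated, reflexive $R$-module.

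First I would extract two consequences of the hypothesis that ${\cal O}_B$ is a direct summand of ${\cal T}_0$. Pulling back by $\pi$, the sheaf ${\cal O}_E$ is a direct summand of ${\cal T}=\pi^*({\cal T}_0)$; choosing an idempotent $e\in\Lambda$ projecting ${\cal T}$ onto this summand, one has $\Lambda e\cong\Hom_E({\cal O}_E,{\cal T})=H^0(E,{\cal T})=M$, so $M$ is a direct summand of $\Lambda$ as an $R$-module. In particular $M$ is finitely generated, and since $\End_E({\cal O}_E)=R$ occurs as a summand of $M$, the module $M$ is non-trivial. Because $\Lambda$ is a reflexive $R$-module by assumption and direct summands of reflexive modules are reflexive, $M$ is reflexive; consequently $\End_R(M)=\Hom_R(M,M)$ is reflexive as well.

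Next I would set up the comparison map. Post-composition makes $\Lambda$ act on $M=\Hom_E({\cal O}_E,{\cal T})$, yielding a homomorphism of graded $R$-algebras $\alpha:\Lambda\to\End_R(M)$, which is degree preserving because the action respects the weight decompositions. The crux is to prove $\alpha$ is an isomorphism, and I would argue in codimension one. On $U\coloneqq C(E)\setminus\mathrm{Exc}(\rho)$ the morphism $\rho$ is an isomorphism, so ${\cal T}|_{\rho^{-1}(U)}$ descends to a locally free sheaf ${\cal M}|_U$ on $U$; both the sheaf $\rho_*\,\mathcal{E}nd({\cal T})$ associated with $\Lambda$ and the sheaf $\mathcal{H}om({\cal M},{\cal M})$ associated with $\End_R(M)$ restrict over $U$ to $\mathcal{E}nd({\cal M}|_U)$, compatibly with $\alpha$. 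Since $\mathrm{codim}_{C(E)}(\mathrm{Exc}(\rho))\geq2$, every height-one prime of $R$ corresponds to a point of $U$, so $\alpha$ becomes an isomorphism after localizing at each height-one prime.

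Finally, a reflexive module $N$ over the normal domain $R$ satisfies $N=\bigcap_{\mathrm{ht}(\pg)=1}N_\pg$, so a homomorphism of reflexive modules that is an isomorphism at every height-one prime is itself an isomorphism. Applying this to $\alpha$, whose source and target are both reflexive, gives $\Lambda\cong\End_R(M)$ as graded $R$-algebras. Combined with $\gldim(\Lambda)<\infty$ and the non-negative grading, this verifies all clauses of the definition and exhibits $\Lambda$ as a graded nc resolution of $R$. I expect the main obstacle to be the codimension-one step: correctly setting up the descent of ${\cal T}$ over $U$ and checking that $\alpha$ restricts there to the identity on $\mathcal{E}nd({\cal M}|_U)$; once this is in place, the passage from a codimension-one isomorphism to a global one is the formal reflexivity argument.
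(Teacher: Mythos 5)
Your proposal is correct and follows essentially the same route as the paper: both realize $M=H^0(E,{\cal T})$ as a direct summand of $\Lambda$ via the summand ${\cal O}_B\subset{\cal T}_0$ (hence reflexive, since $\Lambda$ is assumed reflexive), compare $\Lambda$ with $\End_R(M)$ through the natural evaluation map, observe that this map is an isomorphism away from $\mathrm{Exc}(\rho)$, and use reflexivity together with $\mathrm{codim}_{C(E)}(\mathrm{Exc}(\rho))\geq 2$ to globalize. Your only deviation is cosmetic: you pass from the codimension-one statement to the global isomorphism by localizing at height-one primes and invoking $N=\bigcap_{\mathrm{ht}(\pg)=1}N_\pg$ for reflexive modules over the normal domain $R$, whereas the paper equivalently extends sections of the reflexive sheaves $\rho_*({\cal E}nd_E({\cal T}))$ and ${\cal E}nd(\rho_*({\cal T}))$ across the codimension-$\geq 2$ exceptional locus and then verifies the grading hypothesis of Remark \ref{newRe}, which your degree-preservation remark about $\alpha$ covers.
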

\begin{proof} (see \cite[Proposition 3.4]{BLVdB2})  Using the known direct sum decompositions
\begin{align}
R&=H^0(B, \bigoplus_{m\geq 0}S^m{\cal E}^\smvee),\\
M&=H^0\big(B,{\cal T}_0\otimes (\bigoplus_{m\geq 0}S^m{\cal E}^\smvee)\big),\\
\label{LambdaDec}
\Lambda &=H^0\big(B, {\cal T}_0^\smvee\otimes{\cal T}_0\otimes (\bigoplus_{m\geq 0}S^m{\cal E}^\smvee)\big),	
\end{align}
we see that, if ${\cal O}_B$ is a direct summand of ${\cal T}_0$, then $M$ can be identified (as a graded $R$-module) with a direct summand of $\Lambda$. Since we assumed that $\Lambda$ is a reflexive $R$-module, it follows that $M$ is reflexive.
The natural evaluation map
$$\rho_*({\cal E}nd_E({\cal T}))\times \rho_*({\cal T})\to \rho_*({\cal T})
$$
is ${\cal O}_{C(E)}$-bilinear, so it defines a morphism
$$\nu: \rho_*({\cal E}nd_E({\cal T}))\to {\cal E}nd_{C(E)}(\rho_*({\cal T})),
$$
which is obviously an isomorphism  on  ${C(E)}\setminus\mathrm{Exc}(\rho)$.  On the other hand we have
$$H^0({C(E)},\rho_*({\cal E}nd_E({\cal T}))=H^0(E,{\cal E}nd_E({\cal T}))=\Lambda.
$$
Therefore $\Lambda$ is the $R$-module associated with the coherent sheaf $\rho_*({\cal E}nd_E({\cal T}))$ on the affine scheme $\Spec(R)$. 
Since $\Lambda$ is a finitely generated  reflexive $R$-module and $R$ is Noetherian,  it follows that  for any $p\in \Spec(R)$ the stalk $\Lambda_p$ is a reflexive $R_p$-module, so the coherent sheaf $\rho_*({\cal E}nd_E({\cal T}))$ is reflexive. The same argument shows that $\rho_*({\cal T})$ is reflexive, so ${\cal E}nd(\rho_*({\cal T}))$ is reflexive as well. Therefore $\nu$ induces an isomorphism
\begin{equation}\label{iso}
\begin{split}
\Lambda=&  H^0({C(E)},\rho_*({\cal E}nd_E({\cal T}))= H^0({C(E)}\setminus\mathrm{Exc}(\rho),\rho_*({\cal E}nd_E({\cal T}))\textmap{\simeq}\\  
&H^0({C(E)}\setminus\mathrm{Exc}(\rho),{\cal E}nd(\rho_*({\cal T}))) =H^0({C(E)},{\cal E}nd(\rho_*({\cal T})))=\End_{C(E)} (\rho_*({\cal T})).
\end{split} 
\end{equation}

On the other hand $H^0(\rho_*({\cal T}))=H^0(E,{\cal T})=M$, so $M$ is the $R$-module associated with the coherent sheaf $\rho_*({\cal T})$ on the affine scheme ${C(E)}$. Using \cite[Ex. 5.3, p. 124]{Ha} we obtain a natural isomorphism $\End_{C(E)} (\rho_*({\cal T}))=\End_R(M)$, so (\ref{iso}) induces an isomorphism   $\Lambda\textmap{\simeq} \End_R(M)$. Moreover, via this isomorphism one has for any $m\geq 0$ the inclusion
$$H^0 (B, {\cal T}_0^\smvee\otimes{\cal T}_0\otimes S^m{\cal E}^\smvee )\subset \Hom^m_R(M,M).
$$
Therefore, taking into account (\ref{LambdaDec}) we obtain
$$\End_R(M)=\Lambda=\bigoplus_{m\geq 0}H^0 (B, {\cal T}_0^\smvee\otimes{\cal T}_0\otimes S^m{\cal E}^\smvee )\subset \bigoplus_{m\geq 0} \Hom^m_R(M,M),
$$
 so the hypothesis of Remark \ref{newRe} is fulfilled. The claim follows now by this Remark.
 \end{proof}
 
 The following lemma is similar to \cite[Proposition 2.7 (3)]{WZ}.
 
 \begin{lm}\label{l7} Suppose   that  the pre-image $\rho^{-1}(\mathrm{Exc}(\rho))$ of the exceptional locus 	of $\rho$ has codimension $\geq 2$ in $E$. Then $\Lambda$ and $M$ are reflexive $R$-modules, and the natural morphism $\Lambda\to \End_R(M)$ is an isomorphism of  reflexive $R$-modules, and of graded rings. 
 \end{lm}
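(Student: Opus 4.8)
The plan is to deduce the reflexivity of $M$ and $\Lambda$ directly from the codimension hypothesis, and then to obtain the isomorphism $\Lambda\simto\End_R(M)$ exactly as in the proof of Lemma \ref{l6}, which only used reflexivity of $\Lambda$ together with a codimension-$2$ condition \emph{downstairs}. So the first step is to transfer the codimension condition from $E$ to $C(E)$. Since $\rho$ is birational, $\dim E=\dim C(E)$, and since $\rho$ is proper and surjective the restriction $\rho^{-1}(\mathrm{Exc}(\rho))\to \mathrm{Exc}(\rho)$ is surjective, so $\dim\mathrm{Exc}(\rho)\leq \dim\rho^{-1}(\mathrm{Exc}(\rho))$. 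Hence
$$\mathrm{codim}_{C(E)}\mathrm{Exc}(\rho)\geq \mathrm{codim}_E\big(\rho^{-1}(\mathrm{Exc}(\rho))\big)\geq 2.$$
Writing $U\coloneqq C(E)\setminus\mathrm{Exc}(\rho)$, $V\coloneqq\rho^{-1}(U)$, and $j:U\hookrightarrow C(E)$ for the open inclusion, the morphism $\rho$ restricts to an isomorphism $\rho|_V:V\simto U$, the complement $E\setminus V=\rho^{-1}(\mathrm{Exc}(\rho))$ has codimension $\geq 2$ in $E$, and $C(E)\setminus U=\mathrm{Exc}(\rho)$ has codimension $\geq 2$ in $C(E)$.

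Next I would prove reflexivity. Both ${\cal T}$ and ${\cal E}nd_E({\cal T})={\cal T}^\smvee\otimes{\cal T}$ are locally free on the smooth -- hence normal -- variety $E$, so their sections extend across the codimension-$\geq 2$ closed subset $E\setminus V$ (codimension-two extension of sections on a normal variety). Thus
$$M=H^0(E,{\cal T})=H^0(V,{\cal T}),\qquad \Lambda=H^0(E,{\cal E}nd_E({\cal T}))=H^0(V,{\cal E}nd_E({\cal T})).$$
Under the isomorphism $\rho|_V$ the restriction $(\rho_*{\cal T})|_U$ is identified with the locally free sheaf ${\cal T}|_V$, so $(\rho_*{\cal T})|_U$ is locally free on $U$ and $M=H^0(U,(\rho_*{\cal T})|_U)$. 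Since $C(E)$ is normal and $\mathrm{codim}_{C(E)}(C(E)\setminus U)\geq 2$, the extension $j_*\big((\rho_*{\cal T})|_U\big)$ is a coherent reflexive sheaf on $C(E)$ with global sections $M$; as $C(E)=\Spec(R)$ is affine, this shows $M$ is a reflexive $R$-module. The same argument applied to ${\cal E}nd_E({\cal T})$ shows that $\Lambda$ is reflexive.

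Finally, once reflexivity is in hand the identification $\Lambda\simto\End_R(M)$ is obtained verbatim as in Lemma \ref{l6}. The evaluation pairing yields a natural morphism $\nu:\rho_*({\cal E}nd_E({\cal T}))\to {\cal E}nd_{C(E)}(\rho_*{\cal T})$ which is an isomorphism over $U$, since there $\rho$ is an isomorphism and ${\cal T}$ is locally free. The source has reflexive module of sections $\Lambda$, and the target is reflexive because it is the endomorphism sheaf of the reflexive sheaf $\rho_*{\cal T}$; hence both sheaves coincide with the $j_*$-extension of their restrictions to $U$. Taking global sections and using $\mathrm{codim}_{C(E)}\mathrm{Exc}(\rho)\geq 2$ exactly as in $(\ref{iso})$, $\nu$ induces an isomorphism $\Lambda\simto\End_{C(E)}(\rho_*{\cal T})=\End_R(M)$, the last equality by \cite[Ex. 5.3, p. 124]{Ha}. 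Since all the sheaves carry the scaling $\C^*$-action (equivalently the grading coming from the weight decomposition) and $\nu$ is $\C^*$-equivariant, this is an isomorphism of graded rings.

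The one step requiring genuine care is the reflexivity argument of the second paragraph: justifying the codimension-two extension of sections for the locally free sheaves on $E$, the coherence and reflexivity of the pushforward $j_*$ over the normal base $C(E)$, and -- crucially -- the passage from the codimension hypothesis on $\rho^{-1}(\mathrm{Exc}(\rho))$ inside $E$ to the corresponding statement for $\mathrm{Exc}(\rho)$ inside $C(E)$. Once reflexivity is established, the ring-theoretic identification is routine and simply reuses the computation of Lemma \ref{l6}.
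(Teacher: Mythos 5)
Your proof is correct, and it follows the same overall skeleton as the paper's: transfer the codimension condition from $E$ down to $C(E)$, establish that $\rho_*({\cal T})$ and $\rho_*({\cal E}nd_E({\cal T}))$ have reflexive modules of sections $M$ and $\Lambda$, and then conclude by rerunning the evaluation-map computation of Lemma \ref{l6}. The genuine difference is in the middle step. The paper disposes of reflexivity by citing Van den Bergh's pushforward lemma \cite[4.2.1]{VdB2} -- whose hypothesis is exactly your codimension condition on $\rho^{-1}(\mathrm{Exc}(\rho))$ -- and then passes from sheaf reflexivity to module reflexivity by localizing at primes and invoking \cite[Lemma 15.23.4]{Stack}. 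You instead reprove the relevant special case of that lemma by hand: algebraic Hartogs on the smooth variety $E$ identifies $M$ and $\Lambda$ with sections over $V=\rho^{-1}(U)$, the isomorphism $\rho|_V$ transports these to sections over $U$ of locally free sheaves, and the reflexive extension $j_*$ across the codimension-$\geq 2$ complement in the normal variety $C(E)$ exhibits $M$ and $\Lambda$ as global sections of coherent reflexive sheaves on the affine $\Spec(R)$. This buys self-containedness (no external citation, and you make explicit the dimension count behind the codimension transfer, which the paper merely asserts, as well as the $\C^*$-equivariance giving the graded-ring statement), at the cost of a slightly longer argument; the paper's route is shorter and delegates the depth-two bookkeeping to the literature. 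One small point worth keeping in mind: your reflexivity conclusion uses that coherence and reflexivity of $j_*\big((\rho_*{\cal T})|_U\big)$ require normality of $C(E)$ -- this is harmless here since normality of $C(E)$ is a standing assumption of the section, but it should be flagged as an input rather than a consequence of your hypotheses.
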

 \begin{proof}
Since the pre-image $\rho^{-1}(\mathrm{Exc}(\rho))$ has codimension $\geq 2$ in $E$ and $\rho$ is birational, it follows that $\mathrm{Exc}(\rho)$ has also codimension $\geq 2$ in ${C(E)}$, so the functor $\rho_*$ maps reflexive sheaves to reflexive sheaves by \cite[4.2.1]{VdB2}.

 The sheaves ${\cal T}$, ${\cal E}nd_E({\cal T})$ are locally free, hence reflexive.  Therefore   $\rho_*({\cal T})$, $\rho_*({\cal E}nd_E({\cal T}))$ are reflexive coherent  sheaves on ${C(E)}$. The $R$-modules associated with these sheaves are $M$, respectively $\Lambda$. For  $p\in\Spec(R)$ the localizations $M_p$, $\Lambda_p$ are identified with the stalks $\rho_*({\cal T})_p$,  $\rho_*({\cal E}nd_E({\cal T}))_p$ respectively, so they are reflexive $R_p$-modules.  Using \cite[Lemma 15.23.4]{Stack}, it follows that $M$, $\Lambda$ are reflexive $R$-modules. The isomorphism $\Lambda\to \End_R(M)$ is obtained as in the proof of Lemma \ref{l6}. 
\end{proof}

\begin{lm}\label{l8} Suppose $R$ is a Gorenstein ring. $\Lambda$ is an  MCM $R$-module if and only if
$$H^i\big(B,{\cal T}_0^\smvee\otimes{\cal T}_0\otimes S^\bullet{\cal E}^\smvee\otimes \omega_B\otimes\det{\cal E}^\smvee\big)=0\ \forall i>0. 
$$
\end{lm}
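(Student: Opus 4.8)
The plan is to reduce the maximal Cohen--Macaulay (MCM) property to an $\Ext$-vanishing and then to evaluate the relevant $\Ext$-modules by Grothendieck duality along the collapsing $\rho$. First I would record the dimension count: since $\rho$ is birational, $E$ and $\Spec R$ have the same dimension $d\coloneqq\dim B+\rk{\cal E}$, and $R$ is Gorenstein of dimension $d$ by hypothesis; let $\omega_R$ be its canonical module. Graded local duality then gives $H^i_{\mg}(\Lambda)^\vee\cong\Ext^{d-i}_R(\Lambda,\omega_R)$, so that $\Lambda$ is MCM (i.e. $\mathrm{depth}_R\Lambda=d$) if and only if $\Ext^j_R(\Lambda,\omega_R)=0$ for all $j>0$. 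Thus it suffices to compute these $\Ext$-modules and match them against the stated cohomology groups.

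The crucial input I would use is that ${\cal T}=\pi^*({\cal T}_0)$ is a tilting sheaf on $E$: by Corollary \ref{co3}(3) this means $H^j(E,{\cal E}nd_E({\cal T}))=0$ for all $j>0$. Since $\Spec R$ is affine, $H^j(E,{\cal E}nd_E({\cal T}))=H^0\big(\Spec R,R^j\rho_*{\cal E}nd_E({\cal T})\big)$, so this vanishing forces $R^j\rho_*{\cal E}nd_E({\cal T})=0$ for $j>0$; hence $R\rho_*{\cal E}nd_E({\cal T})$ is just the sheaf associated with the $R$-module $\Lambda$, concentrated in degree $0$. I would then apply Grothendieck duality to the proper morphism $\rho$. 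Because $\rho$ is birational and $E$ is smooth, $\rho^!$ sends the dualizing complex $\omega_R[d]$ of $\Spec R$ to the dualizing complex $\omega_E[d]$ of $E$, i.e. $\rho^!\omega_R=\omega_E$; and as ${\cal E}nd_E({\cal T})$ is locally free and self-dual via the trace pairing, duality yields
$$R\Hom_R(\Lambda,\omega_R)\;\simeq\; R\rho_*\,R{\cal H}om_E\big({\cal E}nd_E({\cal T}),\,\omega_E\big)\;\simeq\; R\rho_*\big({\cal E}nd_E({\cal T})\otimes\omega_E\big).$$

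To finish, I would use that the total space of ${\cal E}$ has $\omega_E=\pi^*(\omega_B\otimes\det{\cal E}^\smvee)$ and ${\cal E}nd_E({\cal T})=\pi^*({\cal T}_0^\smvee\otimes{\cal T}_0)$. Taking cohomology of the displayed isomorphism over the affine scheme $\Spec R$, and using $\pi_*{\cal O}_E=\bigoplus_{m\geq0}S^m{\cal E}^\smvee$ (so that $H^j(E,\pi^*{\cal V})=\bigoplus_{m\geq0}H^j(B,{\cal V}\otimes S^m{\cal E}^\smvee)$), I obtain
$$\Ext^j_R(\Lambda,\omega_R)\;\cong\;\bigoplus_{m\geq0} H^j\big(B,\,{\cal T}_0^\smvee\otimes{\cal T}_0\otimes S^m{\cal E}^\smvee\otimes\omega_B\otimes\det{\cal E}^\smvee\big).$$
Combining this with the criterion of the first paragraph, $\Lambda$ is MCM precisely when the right-hand side vanishes for every $j>0$ and every $m\geq0$, which is exactly the asserted condition $H^i(B,{\cal T}_0^\smvee\otimes{\cal T}_0\otimes S^\bullet{\cal E}^\smvee\otimes\omega_B\otimes\det{\cal E}^\smvee)=0$ for all $i>0$.

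I expect the main obstacle to be the duality step: one has to set up Grothendieck duality for the collapsing $\rho$ and justify the identification $\rho^!\omega_R=\omega_E$ for a birational proper map onto the (Gorenstein, hence Cohen--Macaulay) base $\Spec R$, keeping the grading shifts consistent. The other delicate point, namely that $R\rho_*{\cal E}nd_E({\cal T})$ is a single sheaf rather than a genuine complex, is where the tilting hypothesis is indispensable; everything else is formal manipulation of the bundle cohomology on $B$.
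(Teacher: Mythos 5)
Your proposal is correct and takes essentially the same route as the paper's proof: the tilting vanishing $H^{>0}(E,{\cal E}nd({\cal T}))=0$ over the affine base forces $R\rho_*{\cal E}nd_E({\cal T})$ to be concentrated in degree zero, duality along the proper map $\rho$ with $\rho^!\omega_{C(E)}=\omega_E$ (the paper cites Weyman \cite[Theorem 1.2.22, Proposition 1.2.21 (f)]{We} for exactly this) converts $\Ext^i_R(\Lambda,\omega_R)$ into $H^i\big(E,{\cal H}om({\cal E}nd({\cal T}),\omega_E)\big)$, and Leray for the affine map $\pi$ together with $\omega_E=\pi^*(\omega_B\otimes\det{\cal E}^\smvee)$ gives the stated groups on $B$. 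The only differences are presentational: you phrase the duality step in derived-category form and make explicit the local-duality criterion that $\Lambda$ is MCM iff $\Ext^{j}_R(\Lambda,\omega_R)=0$ for $j>0$, which the paper leaves implicit by following \cite[Lemma 3.2]{BLVdB2}.
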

\begin{proof}
We follow \cite[Lemma 3.2]{BLVdB2}.   Since ${\cal T}$ is a tilting locally free sheaf on $E$ it follows   that
$$H^i(E,{\cal E}nd({\cal T}))=0 \ \forall i>0. $$
Using the Leray spectral sequence associated with $\rho$, and taking into account that ${C(E)}=\Spec(R)$ is affine, we obtain
$$
H^0({C(E)},R^i\rho_*({\cal E}nd({\cal T})))=0 \ \forall i>0,
$$
so $R^i\rho_*({\cal E}nd({\cal T}))=0$ for any $i>0$. Thus the derived direct image $R\rho_*({\cal E}nd({\cal T}))$ reduces to the direct image $\rho_*({\cal E}nd({\cal T}))$. On the other hand we have
$$\Ext^i_R\big(\End_E({\cal T}),\omega_R\big)=\Ext^i_{C(E)}\big(\rho_* ({\cal E}nd({\cal T})) ,\omega_{C(E)}\big)=$$
$$H^0\big({C(E)},{\cal E}xt^i_{C(E)}\big(\rho_* ({\cal E}nd({\cal T})) ,\omega_{C(E)}\big)\big).$$

Using Weyman's Duality Theorem for proper morphisms \cite[Theorem 1.2.22]{We},  taking into account that ${\cal E}nd({\cal T})$ is locally free and $\rho^{!}\omega_{C(E)}=\omega_E$ \cite[Proposition 1.2.21 (f)]{We}, we obtain an isomorphism
$${\cal E}xt^i_{C(E)}\big(\rho_* ({\cal E}nd({\cal T})) ,\omega_{C(E)}\big)\simeq R^i\rho_* {\cal H}om\big({\cal E}nd({\cal T}),\rho^{!}\omega_{C(E)}\big) $$
$$=R^i\rho_* {\cal H}om\big({\cal E}nd({\cal T}), \omega_E\big) .    
$$
The Leray spectral sequence associated with  $\rho$ gives
$$H^0({C(E)},R^i\rho_* {\cal H}om\big({\cal E}nd({\cal T}), \omega_E\big)=H^i\big(E,{\cal H}om({\cal E}nd({\cal T}),\omega_E)\big),
$$
so we get an isomorphism 
$$\Ext^i_R\big(\End_E({\cal T}),\omega_R)\simeq H^i\big(E,{\cal H}om({\cal E}nd({\cal T}),\omega_E)\big). 
$$
Now use the Leray spectral sequence associated with the affine map $\pi$ and the obvious identification $\omega_E=\pi^*(\omega_B\otimes\det({\cal E}^\smvee))$. We  obtain an isomorphism
$$H^i\big(E,{\cal H}om({\cal E}nd({\cal T}),\omega_E)\big)=H^i\big(B,{\cal T}_0^\smvee\otimes{\cal T}_0\otimes(\bigoplus_{m\geq 0}S^m{\cal E}^\smvee)\otimes \omega_B\otimes\det{\cal E}^\smvee\big),
$$ 
which completes the proof.
\end{proof}
\begin{co}\label{co9} Suppose  that  
  $R$ is a Gorenstein ring,  the exceptional locus $$\mathrm{Exc}(\rho)\subset {C(E)}$$ has codimension $\geq 2$,   ${\cal O}_B$ is a direct summand of ${\cal T}_0$, and $\det({\cal E})\simeq \omega_B$. 	Then $\Lambda$ is a graded crepant {\rm nc} resolution of $R$.
\end{co}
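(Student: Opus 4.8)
The plan is to deduce the statement by combining Lemma \ref{l8}, which governs the maximal Cohen--Macaulay (MCM) property and hence crepancy, with Lemma \ref{l6}, which upgrades reflexivity of $\Lambda$ to the assertion that $\Lambda$ is a graded nc resolution. The observation that makes everything fit is that the Calabi--Yau type hypothesis $\det({\cal E})\simeq\omega_B$ collapses the cohomological criterion of Lemma \ref{l8} onto the tilting criterion of Corollary \ref{co3}(3).

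First I would check that $\Lambda$ is an MCM $R$-module. Since $\det({\cal E})\simeq\omega_B$ we have $\omega_B\otimes\det{\cal E}^\smvee\simeq{\cal O}_B$, so the vanishing required by Lemma \ref{l8},
$$H^i\big(B,{\cal T}_0^\smvee\otimes{\cal T}_0\otimes S^\bullet{\cal E}^\smvee\otimes\omega_B\otimes\det{\cal E}^\smvee\big)=0 \quad \forall i>0,$$
reduces to $H^i\big(B,{\cal T}_0^\smvee\otimes{\cal T}_0\otimes S^\bullet{\cal E}^\smvee\big)=0$ for all $i>0$, which is exactly the tilting hypothesis of Corollary \ref{co3}(3) satisfied by ${\cal T}_0$. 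Hence Lemma \ref{l8} applies and $\Lambda$ is MCM; as $R$ is Gorenstein, this is precisely the crepancy condition.

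Next I would produce the reflexivity of $\Lambda$ needed to invoke Lemma \ref{l6}, the point being that MCM-ness already delivers it. By the standing assumptions $R=\C[C(E)]$ is a normal, and by hypothesis Gorenstein hence Cohen--Macaulay, domain; moreover $\Lambda=H^0(E,{\cal E}nd({\cal T}))$ is torsion-free over $R$, because ${\cal E}nd({\cal T})$ is locally free on the integral scheme $E$ and $R=H^0(E,{\cal O}_E)$ by Lemma \ref{l5}. A finitely generated torsion-free module over a normal domain is reflexive once it satisfies Serre's condition $(S_2)$, and an MCM module over a Cohen--Macaulay ring satisfies $(S_2)$ at every prime; therefore $\Lambda$ is a reflexive $R$-module. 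With $\Lambda$ reflexive, ${\cal O}_B$ a direct summand of ${\cal T}_0$, and $\mathrm{codim}_{C(E)}\mathrm{Exc}(\rho)\geq 2$, Lemma \ref{l6} then shows that $\Lambda$ is a graded nc resolution of $R$; that is, $\Lambda\simeq\End_R(M)$ with $M$ a non-trivial finitely generated reflexive module and $\gldim\Lambda<\infty$.

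Finally I would assemble the two halves: $\Lambda$ is a graded nc resolution of $R$, $R$ is Gorenstein, and $\Lambda$ is MCM, which is exactly the definition of a graded crepant nc resolution. The one genuinely delicate point is the reflexivity implication, namely that MCM together with torsion-freeness over a normal Gorenstein domain forces reflexivity; the remaining steps are a matter of matching hypotheses. I expect this route to be cleaner than trying to extract reflexivity from Lemma \ref{l7}, whose codimension hypothesis is imposed on $\rho^{-1}(\mathrm{Exc}(\rho))\subset E$ and is strictly stronger than the hypothesis $\mathrm{codim}_{C(E)}\mathrm{Exc}(\rho)\geq 2$ that is actually available here.
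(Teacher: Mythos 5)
Your proposal is correct and follows essentially the same route as the paper: MCM-ness of $\Lambda$ from Lemma \ref{l8} combined with Corollary \ref{co3}(3) via $\omega_B\otimes\det{\cal E}^\smvee\simeq{\cal O}_B$, then reflexivity of $\Lambda$, then Lemma \ref{l6} (noting, as the paper does, that Lemma \ref{l7} is unavailable here). The only deviation is at the reflexivity step, where the paper simply cites \cite[Lemma 4.2.2 (iii)]{Bu} (MCM modules over Gorenstein rings are reflexive), whereas you inline a valid proof of that fact in the present setting (torsion-freeness plus the $(S_2)$ property of MCM modules over the normal Cohen--Macaulay domain $R$).
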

\begin{proof}
 By Corollary \ref{co3} (3) and Lemma  \ref{l8}, $\Lambda$ is an  MCM $R$-module. Taking into account that $R$ is Gorenstein,  it follows by \cite[Lemma 4.2.2 (iii)]{Bu} that $\Lambda$ is reflexive. Since $M$ is a direct summand of $\Lambda$ (see the proof of Lemma \ref{l6}), $M$ is reflexive as well.
	
\end{proof}

Using Lemmas \ref{l7}, \ref{l8}, and Corollary \ref{co9} we obtain
\begin{thry}\label{co10} With the notations and under the assumptions above we have:
\begin{enumerate}
\item  Suppose that
\begin{enumerate}[(i)]
\item $\mathrm{cod}_E\big(\rho^{-1}(\mathrm{Exc}(\rho)))\geq 2$, or  
\item $\mathrm{cod}_{C(E)}\big(\mathrm{Exc}(\rho)\big)\geq 2$, ${\cal O}_B\subset {\cal T}_0$ is a direct summand, and $\Lambda$ is reflexive.
 \end{enumerate}
 Then $\Lambda$ is isomorphic to $\End_R(M)$ and is a graded {\rm nc} resolution of $R$.
\item Suppose $R$ is a Gorenstein ring, $\det({\cal E})\simeq \omega_B$, and that
\begin{enumerate}[(i)]
\item 	$\mathrm{cod}_E\big(\rho^{-1}(\mathrm{Exc}(\rho)))\geq 2$, or 
\item $\mathrm{cod}_{C(E)}\big(\mathrm{Exc}(\rho)\big)\geq 2$, and ${\cal O}_B\subset {\cal T}_0$ is a direct summand.
	\end{enumerate}
Then $\Lambda$ is isomorphic to $\End_R(M)$ and is a graded crepant {\rm nc} resolution of $R$.
\end{enumerate}

\end{thry}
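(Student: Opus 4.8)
The plan is to treat this statement as an assembly result that packages the preceding lemmas, so that the whole argument reduces to matching each of the four cases to an already-established assertion. Only part (2)(i) requires a genuinely new observation; the other three are essentially verbatim applications of Lemmas \ref{l6}, \ref{l7} and Corollary \ref{co9}. In each case I would also record the isomorphism $\Lambda\simeq\End_R(M)$ explicitly, since it is part of the assertion and is already supplied by the proofs of those lemmas.

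For part (1)(i) I would invoke Lemma \ref{l7}: the hypothesis $\mathrm{cod}_E(\rho^{-1}(\mathrm{Exc}(\rho)))\geq 2$ is exactly what that lemma requires, and it yields that $\Lambda$ and $M$ are reflexive $R$-modules together with a graded ring isomorphism $\Lambda\simeq\End_R(M)$. Since geometric tilting theory already gives $\gldim(\Lambda)<\infty$, and $M$ is a nonzero finitely generated reflexive module (being the module of sections of the nonzero sheaf $\rho_*({\cal T})$ on the affine scheme $\Spec(R)$), the defining conditions of a graded {\rm nc} resolution are met. For part (1)(ii) the hypotheses $\mathrm{cod}_{C(E)}(\mathrm{Exc}(\rho))\geq 2$, ${\cal O}_B$ a direct summand of ${\cal T}_0$, and $\Lambda$ reflexive are precisely the hypotheses of Lemma \ref{l6}, whose conclusion is that $\Lambda$ is a graded {\rm nc} resolution. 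Thus part (1) needs no work beyond citing the appropriate lemma depending on which codimension hypothesis is assumed.

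For part (2)(ii) I would likewise note that $R$ Gorenstein, $\det({\cal E})\simeq\omega_B$, $\mathrm{cod}_{C(E)}(\mathrm{Exc}(\rho))\geq 2$ and ${\cal O}_B$ a direct summand of ${\cal T}_0$ are exactly the hypotheses of Corollary \ref{co9}, which delivers the crepant conclusion directly. The remaining case, part (2)(i), is where the only real content lies. Here I would first apply Lemma \ref{l7} as in (1)(i) to obtain $\Lambda\simeq\End_R(M)$ with both modules reflexive and $\gldim(\Lambda)<\infty$, so that $\Lambda$ is already a graded {\rm nc} resolution; what must be added is that it is \emph{crepant}, i.e. that $\Lambda$ is a maximal Cohen-Macaulay (MCM) $R$-module. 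The key step is to feed $\det({\cal E})\simeq\omega_B$, equivalently $\omega_B\otimes\det{\cal E}^\smvee\simeq{\cal O}_B$, into the cohomological criterion of Lemma \ref{l8}, which then collapses to
$$
H^i\big(B,{\cal T}_0^\smvee\otimes{\cal T}_0\otimes S^\bullet{\cal E}^\smvee\big)=0\quad\forall\, i>0,
$$
and this is exactly the tilting vanishing of Corollary \ref{co3}(3) built into our standing hypothesis on ${\cal T}_0$. Hence $\Lambda$ is MCM, and since $R$ is Gorenstein the resolution is crepant.

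The main (indeed only) obstacle is this bookkeeping in part (2)(i): one must notice that the twist $\omega_B\otimes\det{\cal E}^\smvee$ appearing in Lemma \ref{l8} becomes trivial under $\det({\cal E})\simeq\omega_B$, so that the MCM criterion reduces to a vanishing we already possess. Everything else is a matter of aligning hypotheses with the previously proven results, and no additional estimate or construction is needed.
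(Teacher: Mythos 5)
Your proposal is correct and follows essentially the same route as the paper, whose proof of Theorem \ref{co10} is precisely the assembly of Lemmas \ref{l7}, \ref{l8} (together with Lemma \ref{l6}) and Corollary \ref{co9}. In particular, your key observation in case (2)(i) --- that $\det({\cal E})\simeq\omega_B$ makes $\omega_B\otimes\det{\cal E}^\smvee$ trivial, so the MCM criterion of Lemma \ref{l8} reduces to the tilting vanishing of Corollary \ref{co3}(3) --- is exactly the argument the paper uses inside the proof of Corollary \ref{co9}.
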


All our examples will be obtained using homogeneous GLSM presentations as explained in Remark 
\ref{NewRemarkNew}. For this class of examples  we will prove an explicit Gorenstein criterion for the ring $R$.

\section{Homogeneous gauged linear sigma models}\label{GeomSect}

\subsection{GIT for  representations of reductive groups}

Let $U$ be a finite dimensional complex vector space.  For a morphism $\psi:\C^*\to \GL(U)$ and an integer $m\in\Z$ we denote by $U^{\psi}_m\subset U$ the linear subspace corresponding to the weight $\zeta\to \zeta^m$, and by $\Wg(\psi)\subset\Z$ the set of weights:
$$\Wg(\psi)=\{m\in \Z|\ U^{\psi}_m\ne\{0\}\}.
$$
The weight decomposition of $U$ reads
$$U=\bigoplus_{m\in\Z} U^{\psi}_m=\bigoplus_{m\in\Wg(\psi)} U^{\psi}_m.
$$
For a vector $u\in U$ we denote by $u^\psi_m$ its $U^{\psi}_m$-component.

Let $\alpha:G\to \GL(U)$ be a linear representation of a complex reductive group $G$ on  $U$, and let $\chi:G\to\C^*$ be a character of $G$.  A vector $u\in U$ is called 
\begin{itemize}
\item $\chi$-semistable, if there exists $n>0$ and  
$$f\in \C[U]^G_{\chi^n}\coloneqq \{f\in \C[U]| f(gu)=\chi(g)^n f(u)\ \forall g\in G\ \forall u\in U\}$$
 such that $f(u)\ne 0$.
\item $\chi$-stable	if it is $\chi$-semistable, the stabilizer $G_u$ is finite, and the orbit $Gu$ is closed in the Zariski open subset of $\chi$-semistable vectors.
\item $\chi$-unstable, if it is not $\chi$-semistable.
\end{itemize}

We will denote by $U_{\st}^\chi$, $U_{\ss}^\chi$, $U_{\us}^\chi$ the sets of  $\chi$-stable, $\chi$-semistable and $\chi$-unstable  points of $U$.
The $\chi$-(semi)stability condition  coincides  with the classical GIT (semi)stability condition associated with the linearization of $\alpha$ in the $G$-line bundle ${\cal O}_U\otimes{\chi^{-1}}$ \cite[Lemma 2.4]{Ho}.

 For a  morphism of algebraic groups $\xi:\C^*\to G$ let  $\langle \chi,\xi\rangle\in\Z$ be the degree of the composition $\chi\circ\xi:\C^*\to\C^*$. The limit $\lim_{\zeta\to 0} (\alpha\circ\xi)(\zeta)(u)$ exists if and only if $u\in \bigoplus_{m\geq 0} U^{\alpha\circ\xi}_m$. Define
$$\mu^\chi(u,\xi)\coloneqq \left\{
\begin{array}{ccc}
\infty 	&\rm if & \exists m\in\Z_{<0} \hbox{ such that } u^{\alpha\circ\xi}_m\ne 0\\
\langle \chi,\xi\rangle & \rm if & u\in \bigoplus_{m\in\Z_{\geq 0}} U^{\alpha\circ\xi}_m.
\end{array}
\right.
$$
Denote by $e$ the unit element of $G$ and also the trivial morphism $\C^*\to G$. Using the Hilbert-Mumford stability criterion for linear actions (\cite{He}, \cite{Ho}, \cite{Te}) we obtain
\begin{pr}\label{HM}
A vector $u\in U$ is $\chi$-stable ($\chi$-semistable) if and only if for any   $\xi\in \Hom(\C^*,G)\setminus\{e\}$ for which $\langle \chi,\xi\rangle\leq 0$ (respectively $\langle\chi,\xi\rangle < 0$) there exists $m\in \Z_{<0}$ such that  $u^{\alpha\circ\xi}_m\ne 0$.
\end{pr}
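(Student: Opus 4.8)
The plan is to restate the combinatorial condition of the proposition in terms of the numerical invariant $\mu^\chi(u,\xi)$ introduced above, reducing everything to the classical Hilbert-Mumford numerical criterion cited from \cite{He}, \cite{Ho}, \cite{Te}. The translation is immediate: the condition ``there exists $m\in\Z_{<0}$ with $u^{\alpha\circ\xi}_m\neq 0$'' says exactly that $\lim_{\zeta\to 0}(\alpha\circ\xi)(\zeta)(u)$ does not exist, i.e.\ that $\mu^\chi(u,\xi)=\infty$; when no such $m$ exists the limit exists and $\mu^\chi(u,\xi)=\langle\chi,\xi\rangle$. Hence the semistability clause of the statement is equivalent to ``$\mu^\chi(u,\xi)\geq 0$ for all $\xi$'' and the stability clause to ``$\mu^\chi(u,\xi)>0$ for all $\xi\neq e$'' (reading $\infty>0$). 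It thus suffices to prove the two equivalences $u\in U_{\ss}^\chi\Leftrightarrow \mu^\chi(u,\xi)\geq 0\ \forall\xi$ and $u\in U_{\st}^\chi\Leftrightarrow \mu^\chi(u,\xi)>0\ \forall\xi\neq e$.

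For the ``only if'' directions I would argue directly from the definition of semistability. If $u$ is $\chi$-semistable, choose $n>0$ and $f\in\C[U]^G_{\chi^n}$ with $f(u)\neq 0$; for a $\xi$ whose limit $u_0$ exists one computes $f((\alpha\circ\xi)(\zeta)(u))=\zeta^{n\langle\chi,\xi\rangle}f(u)$, and letting $\zeta\to 0$ the left-hand side tends to the finite value $f(u_0)$, which forces $\langle\chi,\xi\rangle\geq 0$, i.e.\ $\mu^\chi(u,\xi)\geq 0$. For the stable case I would rule out $\langle\chi,\xi\rangle=0$ with $\xi\neq e$: in that situation $f(u_0)=f(u)\neq 0$, so $u_0\in U_{\ss}^\chi$; since $u_0\in\overline{Gu}$ and the orbit $Gu$ is closed in $U_{\ss}^\chi$, we get $u_0\in Gu$, whence $G_{u_0}$ is finite; but $u_0$ is fixed by the one-dimensional subtorus $\xi(\C^*)$, a contradiction. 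Thus $\mu^\chi(u,\xi)>0$.

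For the ``if'' directions I would invoke the Hilbert-Mumford theorem. By \cite[Lemma 2.4]{Ho} the $\chi$-(semi)stability used here agrees with GIT (semi)stability for the linearization in ${\cal O}_U\otimes\chi^{-1}$, and a direct computation of the weight of $\xi$ on the fibre at the limit point identifies $\mu^\chi(u,\xi)$, up to the standard sign normalization, with Mumford's numerical index for this linearization (the non-existence of the limit corresponding to $\mu=\infty$). The numerical criterion of \cite{He}, \cite{Ho}, \cite{Te} then yields the contrapositives: an unstable $u$ possesses a $\xi$ with existing limit and $\langle\chi,\xi\rangle<0$, and a semistable but non-stable $u$ possesses a nontrivial $\xi$ degenerating it inside $U_{\ss}^\chi$ to a point of a strictly smaller orbit with $\langle\chi,\xi\rangle=0$; combined with the orbit-closure and stabilizer statements furnished by the criterion this gives both equivalences.

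The main obstacle is exactly this last ``if'' direction: it is the substantive content of the Hilbert-Mumford criterion — the existence of an (optimal) destabilizing one-parameter subgroup via Kempf-Rousseau theory — which I would import from the cited references rather than reprove. The only real care needed in assembling the rest is bookkeeping: fixing the sign convention so that semistability corresponds to $\mu^\chi\geq 0$ rather than $\leq 0$, treating the case $\mu^\chi=\infty$ separately, and, in the stable statement, correctly converting the strict inequality $\mu^\chi>0$ into the closed-orbit and finite-stabilizer conditions of the definition.
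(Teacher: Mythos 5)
Your proposal is correct and takes essentially the same route as the paper, which offers no independent argument for this proposition: it identifies $\chi$-(semi)stability with GIT (semi)stability for the linearization in ${\cal O}_U\otimes\chi^{-1}$ via \cite[Lemma 2.4]{Ho} and then simply quotes the Hilbert--Mumford criterion of \cite{He}, \cite{Ho}, \cite{Te}, exactly as you do for the substantive ``if'' direction. Your translation into the invariant $\mu^\chi(u,\xi)$ and your direct verification of the ``only if'' direction (the computation $f((\alpha\circ\xi)(\zeta)u)=\zeta^{n\langle\chi,\xi\rangle}f(u)$ and the closed-orbit/finite-stabilizer argument) are sound, but strictly speaking already subsumed in the cited criterion.
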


Equivalently,
\begin{re} A vector $u\in U$ is $\chi$-unstable if and only if there exists a   morphism $\xi:\C^*\to G$ such that $\langle \chi,\xi\rangle  < 0$ and 
$u\in \bigoplus_{m\in \Z_{\geq 0}}U^{\alpha\circ\xi}_m$.
\end{re}

Let  $\Dg:\Hom(\C^*,G)\to \g$ be the injective map defined by the condition
%
 $\Dg(\xi)=d_1\xi(1)$, where  $d_1\xi:\mathrm{Lie}(\C^*)=\C\to\g$ stands for the differential of $\xi$ at 1. Note that for any torus $T\subset G$ the image $\Dg(\Hom(\C^*,T))\subset \g$ is a free $\Z$-submodule of rank   $\dim(T)$.

 Let  $\llangle\cdot,\cdot\rrangle:\g\times\g\to\C$ be  an $\ad_G$-invariant symmetric, bilinear form on the Lie algebra $\g$ of $G$ with the property that for every torus $T\subset G$, its restriction 
 $$
 \Dg(\Hom(\C^*,T))\times \Dg(\Hom(\C^*,T))\to\C
 $$
 is an inner product with rational coefficients (see \cite[section 2.1]{He}). This condition implies 
 $$\llangle \Dg(\xi) ,\Dg(\xi)\rrangle\in [0,\infty)\cap \Q \ \ \forall \xi\in\Hom(\C^*,G).$$
We obtain an $\ad_G$-invariant norm on $\Hom(\C^*,G)$ given by
 $$\vertiii{\xi}\coloneqq \sqrt{\llangle \Dg(\xi) ,\Dg(\xi)\rrangle}.
 $$
 
  Let $u\in U_{\us}^\chi$. An indivisible morphism $\xi\in \Hom(\C^*,G)\setminus \{e\}$ is called an optimal destabilizing morphism for $u$ if it realizes the negative minimum of the map $\alpha\mapsto \frac{1}{\vertiii{\alpha}}\mu^\chi(u,\alpha)$, i.e. if
  $$\frac{\langle \chi,\xi\rangle }{\vertiii{\xi}}=\inf \bigg\{\frac{\langle \chi,\beta\rangle }{\vertiii{\beta}}\ \vline\  \beta\in \Hom(\C^*,G)\setminus \{e\},\ u\in \bigoplus_{m\in \Z_{\geq 0}}U^\beta_m \bigg\}.
  $$
  
For a vector $u\in U^\chi_\ss$ let $\Xi^\chi(u)$  be the set of optimal destabilizing morphisms  for $u$. Recall  that any morphism $\xi\in \Hom(\C^*,G)$ defines a parabolic subgroup $P(\xi)\subset G$ given by
$$P(\xi)\coloneqq \{g\in G|\ \lim_{\zeta\to 0} \xi(\zeta) g \xi(\zeta)^{-1}\hbox{ exists}\}.
$$ 
By a result of Kempf \cite{Ke1} the parabolic subgroup $P^\chi(u)$ associated with an element $\xi\in \Xi^\chi(u)$ is independent of $\xi$. Moreover, the set $\Xi^\chi(u)$ is an orbit with respect to the action of  $P^\chi(u)$ by conjugation on  $\Hom(\C^*,G)\setminus \{e\}$.  
\begin{ex}\label{ExGrass}
Let $V$, $Z$ be non-trivial, finite dimensional complex vector spaces, and let $\alpha$ be the standard representation of $\GL(Z)$ on $\Hom(V,Z)$.  The set of characters of $\GL(Z)$ is $\{\det^t|\ t\in\Z\}$. Denoting by $\Hom(V,Z)^{\rm epi}\subset \Hom(V,Z)$ the open subspace of epimorphisms, one has
$$\Hom(V,Z)_{\ss}^{\det^t}=\left\{
\begin{array}{ccc}
\Hom(V,Z)^{\rm epi} &\rm if & t>0\\	
\Hom(V,Z) &\rm if & t=0\\
\emptyset  &\rm if & t<0
\end{array}
\right..
$$	
The bilinear map
$$\gl(Z)\times\gl(Z)\ni (x,y)\mapsto \llangle x,y\rrangle\coloneqq \tr(xy)
$$
is $\ad_{\GL(Z)}$-invariant, and satisfies the rationality condition mentioned above.
\begin{itemize}
\item Let $t>0$. A morphism $\xi\in \Hom(\C^*,G)\setminus \{e\}$ is   optimal destabilizing  for 
$$u\in \Hom(V,Z)\setminus \Hom(V,Z)^{\rm epi}=\Hom(V,Z)_{\us}^{\det^t}$$
 if and only if there exists a  complement $\Gamma$ of $I\coloneqq \im(u)$  in $Z$ such that, with respect to the direct sum decomposition $Z=I\oplus \Gamma$, one has
\begin{equation}\label{optimal+GR}\xi(\zeta)=\begin{pmatrix}
\id_I & 0\\
0 & \zeta^{-1}\id_\Gamma	
\end{pmatrix} \ \forall \zeta\in\C^*.
\end{equation}
In this case one has $\Hom(V,Z)_{\ss}^{\det^t}=\Hom(V,Z)_{\st}^{\det^t}$, and  the group $\GL(Z)$ acts freely on this space. Putting $N\coloneqq \dim(V)$, $k\coloneqq \dim(Z)$ the corresponding GIT quotient is
$$\qmod{\Hom(V,Z)^{\rm epi}}{\GL(Z)}=\Gr_{N-k}(V)=\Gr_{k}(V^\smvee).
$$

\item Let $t<0$. In this case any vector $u\in \Hom(V,Z)$ is $\det^t$-unstable, and has a unique  optimal destabilizing morphisms which is $\zeta\mapsto\zeta\id_Z$.
\end{itemize}
\end{ex}

\begin{pr} \label{new} Let $\alpha:G\to \GL(U)$, $\beta:G\to \GL(F)$ be linear representations of $G$, and $\chi\in \Hom(G,\C^*)$ a character. Then
\begin{enumerate}
\item $U_{\ss}^\chi\times F\subset (U\times F)_{\ss}^\chi$,
\item Suppose that	for every $u\in U_\us^\chi$ and $\xi\in\Xi^\chi(u)$ one has $\Wg(\beta\circ \xi)\subset\Z_{\geq 0}$. Then
 $$U_{\ss}^\chi\times F= (U\times F)_{\ss}^\chi\ .
 $$
\end{enumerate}
\end{pr}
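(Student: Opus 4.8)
The plan is to treat the two inclusions separately, relying throughout on the Hilbert-Mumford characterization of (in)stability recorded in Proposition \ref{HM} and in the Remark immediately following it. Note that on $U\times F$ the group $G$ acts diagonally via $\alpha\times\beta$, and that $U\times F=U\oplus F$ as $G$-representations.

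For part (1), I would simply transport invariant functions along the $G$-equivariant projection $p:U\times F\to U$. If $u\in U_\ss^\chi$, pick $n>0$ and $f\in\C[U]^G_{\chi^n}$ with $f(u)\ne 0$; then $\tilde f\coloneqq f\circ p$ lies in $\C[U\times F]^G_{\chi^n}$, because $p$ is equivariant and $\chi$ is the same character, and $\tilde f(u,y)=f(u)\ne 0$ for every $y\in F$. Hence every pair $(u,y)$ with $u\in U_\ss^\chi$ is $\chi$-semistable, which gives $U_\ss^\chi\times F\subset (U\times F)_\ss^\chi$.

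For part (2), one inclusion is exactly part (1), so it remains to prove $(U\times F)_\ss^\chi\subset U_\ss^\chi\times F$. I would argue by contraposition: assuming $u\in U_\us^\chi$, I will show $(u,y)\in (U\times F)_\us^\chi$ for every $y\in F$, the point being to exhibit a single one-parameter subgroup destabilizing the pair. Choose an optimal destabilizing morphism $\xi\in\Xi^\chi(u)$; since $\xi$ destabilizes $u$ we have $\langle\chi,\xi\rangle<0$ and $u\in\bigoplus_{m\in\Z_{\geq 0}}U_m^{\alpha\circ\xi}$, i.e. $\lim_{\zeta\to 0}(\alpha\circ\xi)(\zeta)(u)$ exists. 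The decisive observation is that the weight decomposition of $U\oplus F$ under $(\alpha\times\beta)\circ\xi$ is the direct sum of those of $U$ and $F$, so $(U\times F)_m^{(\alpha\times\beta)\circ\xi}=U_m^{\alpha\circ\xi}\oplus F_m^{\beta\circ\xi}$. Thus $(u,y)\in\bigoplus_{m\in\Z_{\geq 0}}(U\times F)_m^{(\alpha\times\beta)\circ\xi}$ holds precisely when both $u$ and $y$ have non-negative $\xi$-weights. The first condition is already known, and the hypothesis $\Wg(\beta\circ\xi)\subset\Z_{\geq 0}$ forces $F=\bigoplus_{m\in\Z_{\geq 0}}F_m^{\beta\circ\xi}$, so the second holds for arbitrary $y$. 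Together with $\langle\chi,\xi\rangle<0$ this yields $\mu^\chi((u,y),\xi)=\langle\chi,\xi\rangle<0$, and the Remark after Proposition \ref{HM} identifies $(u,y)$ as $\chi$-unstable. This establishes the missing inclusion, hence the equality.

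The argument is essentially formal; the only place where genuine content enters is the hypothesis on $\Xi^\chi(u)$, which guarantees that an optimal destabilizer of $u$ acts with non-negative weights on $F$ and therefore does not ``re-stabilize'' the added coordinate $y$. I do not expect a real obstacle, but the step warranting care is the reuse of the \emph{same} $\xi$ for the pair: this is exactly what the product structure of the weight decomposition together with the sign $\langle\chi,\xi\rangle<0$ provide, so that no separate optimization over one-parameter subgroups of $U\times F$ is required.
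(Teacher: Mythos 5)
Your proposal is correct, and for part (2) --- the substantive half of the proposition --- it coincides with the paper's own argument: both proofs fix an optimal destabilizing morphism $\xi\in\Xi^\chi(u)$ of the unstable point $u$, use the weight-space identity $(U\times F)^{(\alpha\times\beta)\circ\xi}_m=U^{\alpha\circ\xi}_m\times F^{\beta\circ\xi}_m$, invoke the hypothesis $\Wg(\beta\circ\xi)\subset\Z_{\geq 0}$ to get $F=\bigoplus_{m\in\Z_{\geq 0}}F^{\beta\circ\xi}_m$ and hence $(u,y)\in\bigoplus_{m\in\Z_{\geq 0}}(U\times F)^{(\alpha\times\beta)\circ\xi}_m$, and conclude via the Remark following Proposition \ref{HM} --- in particular you correctly identified that the crux is reusing the \emph{same} $\xi$ for the pair, with no new optimization over one-parameter subgroups of $U\times F$. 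Where you genuinely diverge is part (1): you argue directly from the definition of $\chi$-semistability, pulling back a non-vanishing invariant $f\in\C[U]^G_{\chi^n}$ along the $G$-equivariant projection $p:U\times F\to U$ to produce $\tilde f=f\circ p\in\C[U\times F]^G_{\chi^n}$ with $\tilde f(u,y)\neq 0$, whereas the paper deduces (1) from the Hilbert--Mumford criterion of Proposition \ref{HM} together with the same product weight decomposition (a semistable $u$ has a negative-weight component for every candidate $\xi$ with $\langle\chi,\xi\rangle<0$, and that component persists in the pair $(u,y)$). Your function-theoretic route is more elementary --- it needs no one-parameter-subgroup calculus at all and works verbatim for any linearization --- while the paper's choice keeps both halves of the proposition inside a single framework, so that (1) and (2) both fall out of the one displayed weight-space equality. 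Both arguments are complete and correct.
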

\def\Spec{\mathrm{Spec}}
\begin{proof}
 Let $\alpha\beta:G\to \GL(U\times F)$ be the morphism induced by the pair $(\alpha,\beta)$.  \\  
 (1) The first claim follows from Proposition \ref{HM}	 using the equality
 $$(U\times F)^{\alpha\beta\circ\xi}_m=U^{\alpha\circ\xi}_m\times F^{\beta\circ\xi}_m\ \forall m\in\Z.$$
(2) Taking into account (1) it suffices to prove the inclusion $(U\times F)_{\ss}^\chi\subset U_{\ss}^\chi\times F$ or, equivalently,  $ U_{\us}^\chi\times F\subset  (U\times F)_{\us}^\chi$.  Let $(u,y)\in  U^\chi_\us\times F$. We claim that any optimal destabilizing morphism $\xi\in \Xi^\chi(u)$ destabilizes the pair $(u,y)$. Indeed, since $\xi$ destabilizes $u$, it follows that $\langle \chi,\xi\rangle <0$, and  $u\in \bigoplus_{m\in \Z_{\geq 0}}U^{\alpha\circ\xi}_m$. By assumption one has $F=\bigoplus_{m\in \Z_{\geq 0}} F^{\beta\circ \xi}_m$, so $(u,y)\in \bigoplus_{m\in \Z_{\geq 0}}(U\times F)^{\alpha\beta\circ\xi}_m$.

\end{proof}

\subsection{GLSM presentations}

Let   $B$ be a complex projective manifold, $\pi:E\to B$    a  rank $r$ vector bundle on $B$,  and $s\in\Gamma(E^\smvee)$  a  section in the dual bundle. 

\begin{dt}\label{GLSMDef}
An algebraic geometric GLSM presentation of the pair $(E\stackrel{\pi}{\to}B,s)$ is the data of a 4-tuple $(G,U,F,\chi)$, where $G$ is a complex reductive group,  $U$ and $F$ are finite dimensional $G$-representation spaces, and $\chi\in \Hom(G,\C^*)$ is a character  such that the following conditions are satisfied:
\begin{enumerate} 
\item $U_\st^\chi=U_{\ss}^\chi$  and  $G$ acts freely	 on this set.
\item The base $B$ coincides with the quotient $U_{\ss}^\chi/G$, and the vector bundle $E$ is the   $F$-bundle associated with the principal $G$-bundle $p:U_{\ss}^\chi\to  B$ and the representation  space $F$.
\item \label{signs} Any optimal destabilizing morphism of any unstable point $u\in U^\chi_\us$ acts with non-negative weights on $F$.
\item \label{extension-eq} The $G$-equivariant map $\hat s:U_{\ss}^\chi\to F^\smvee$ corresponding to $s$ extends to a $G$-equivariant  polynomial map  $\sigma:U\to F^\smvee$.

%
\end{enumerate}
\end{dt}

Note that the map $\sigma:U\to F^\smvee$ is a covariant extension of the $G$-equivariant map $\hat s:U_{\ss}^\chi\to F^\smvee$ corresponding to $s$.

Many interesting  manifolds (e.g. Grassmann manifolds, Flag manifolds, projective toric manifolds) can be obtained as quotients of the form $B=U^\chi_\ss/G$ for a pair $(U,\chi)$ satisfying condition (1) in Definition \ref{GLSMDef}. For  such a quotient manifold $B$ one obtains submanifolds $X\subset B$ defined as zero loci of regular sections $s$ in   associated bundles of the form $E^\smvee= U^\chi_\ss\times_G F^\smvee$. This very general construction method yields a large class of algebraic manifolds with interesting properties (see section \ref{ExSect}).

To explain the role of the fourth condition in this definition note first that, giving a $G$-covariant $\sigma:U\to F^\smvee$ is equivalent to defining a  $G$-invariant,  polynomial map  $\sigma^\smvee: U\times F\to \C$, which is linear with respect to the second argument. The map $\sigma^\smvee$ is given by
$$\sigma^\smvee (u,z)=\langle \sigma(u),z\rangle.
$$

The   bundle $E$ is  the associated bundle $U_\ss^\chi\times_G F$. Denote by $q:U_\ss^\chi\times  F\to E$ the projection map. Then $\resto{\sigma^\smvee}{U_\ss^\chi\times F}=s^\smvee\circ q$, so   $\sigma^\smvee$ is an extension of the pull back   of the potential $s^\smvee:E\to\C$ intervening in the gauged LG model associated with $s$ (see section \ref{LGmodels}). 

 In many cases the complement of $U_\st^\chi$ in $U$ has codimension $\geq 2$, hence the existence of a regular $G$-equivariant extension $\sigma:U\to F^\smvee$ of $\hat s$ follows automatically.   Therefore in this way we obtain a large class of triples satisfying   Definition \ref{GLSMDef}. 
%

Condition (\ref{signs}) in Definition \ref{GLSMDef} implies that, for every $u\in U_\us^\chi$ and any $\xi\in \Xi^\chi(u)$, the $\C^*$-representation $\beta^\smvee\circ\xi$ on $F$ has only non-negative weights. Therefore, by Proposition \ref{new} 
\begin{pr} \label{SSProd} 
Let $(G,U,F,\chi)$ be an algebraic geometric GLSM presentation of $(E\textmap{\pi} B,s)$ with $s\in \Gamma(E^\smvee)$. Then 
\begin{equation}\label{first-char}
U_{\ss}^\chi\times F=(U\times F)_{\ss}^\chi,
\end{equation}
so the   bundle $E=U_{\ss}^\chi\times_G F$ coincides with the GIT quotient $(U\times F)_{\ss}^\chi/G$. 
\end{pr}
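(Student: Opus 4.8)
The plan is to derive the statement as an immediate consequence of Proposition \ref{new}, which was proved for an arbitrary pair of representations $\alpha:G\to\GL(U)$ and $\beta:G\to\GL(F)$. The inclusion $U_{\ss}^\chi\times F\subset (U\times F)_{\ss}^\chi$ is unconditional: it is exactly part (1) of that proposition, resting only on the fibrewise weight identity $(U\times F)_m^{\alpha\beta\circ\xi}=U_m^{\alpha\circ\xi}\times F_m^{\beta\circ\xi}$ together with the Hilbert--Mumford criterion (Proposition \ref{HM}). No assumption on $F$ is needed for this half, so I would dispose of it first.

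For the reverse inclusion I would invoke part (2) of Proposition \ref{new}, whose hypothesis requires that for every $u\in U_\us^\chi$ and every optimal destabilizing morphism $\xi\in\Xi^\chi(u)$ the induced $\C^*$-action on the second factor have non-negative weights, i.e. $\Wg(\beta\circ\xi)\subset\Z_{\geq 0}$. The key step is then to recognize that this is precisely condition (\ref{signs}) in Definition \ref{GLSMDef}, which demands that every optimal destabilizing morphism of every unstable point act with non-negative weights on $F$. Granting this, Proposition \ref{new}(2) gives $(U\times F)_{\ss}^\chi\subset U_{\ss}^\chi\times F$, and combined with the first inclusion this yields the equality (\ref{first-char}). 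The one genuinely delicate point, and the place where I would be most careful, is the bookkeeping of duals: the section $s$ enters through the covariant $\hat s:U_{\ss}^\chi\to F^\smvee$, so one must confirm that the sign condition of the definition is imposed on the representation on $F$ itself (the fibre of $E$, matching the $\beta$ of Proposition \ref{new}) and not on its dual. Once the conventions are fixed this matching is immediate, and it is the only real content of the argument.

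It then remains to record the geometric consequence. By definition $E$ is the associated bundle $U_{\ss}^\chi\times_G F=(U_{\ss}^\chi\times F)/G$, and since $G$ acts freely on $U_{\ss}^\chi$ by condition (1) of Definition \ref{GLSMDef} it acts freely on $U_{\ss}^\chi\times F$, so this quotient is geometric. Rewriting $U_{\ss}^\chi\times F$ as $(U\times F)_{\ss}^\chi$ by the equality just established identifies $E$ with the GIT quotient $(U\times F)_{\ss}^\chi/G$ of the representation $U\oplus F$ linearized by $\chi$, as claimed.
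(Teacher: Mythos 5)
Your proposal is correct and follows exactly the paper's own route: the paper derives Proposition \ref{SSProd} directly from Proposition \ref{new}, observing (in the sentence immediately preceding the statement) that condition (\ref{signs}) of Definition \ref{GLSMDef} is precisely the non-negative-weight hypothesis of Proposition \ref{new}(2). Your extra care about the dual bookkeeping is well placed --- the paper itself writes $\beta^\smvee\circ\xi$ there where the sign condition is meant for the representation on $F$ itself --- but this does not change the argument, which matches yours in every essential step.
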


We will see that this proposition has important consequences (see Proposition \ref{KempfProp} in the next section).

 \subsection{Homogeneous GLSM presentations}
 \label{HomGLSMSect}
 
 In this section we introduce a homogeneity condition for algebraic geometric GLSM presentations which has interesting geometric consequences.
 
 Let   $B$ be a complex projective manifold, $\pi:E\to B$   a  rank $r$ vector bundle on $B$,  and $s\in\Gamma(E^\smvee)$   a  section in the dual bundle.
\begin{dt}\label{HomGLSMDef} 
An algebraic geometric GLSM presentation $(G,U,F,\chi)$
of  the pair $(E\textmap{\pi} B,s)$   will be called homogeneous, if there exists a right action $\gamma:U\times{\cal G}\to U$ by linear automorphisms  of a connected, complex reductive group ${\cal G}$ on  $U$ such that:
 \begin{enumerate}
 \item   $\gamma$ commutes with the fixed $G$-action on $U$. 
 \item 	 The  induced action $\gamma_B:B\times{\cal G}\to B$ on the quotient $B=U^\chi_\ss/G$ is transitive.
 \item Let ${\cal P}_0\subset {\cal G}$ be the stabilizer of a fixed point $b_0=Gu_0\in B$. The group morphism  $\lambda_0:{\cal P}_0\to G$   defined by 
 $$\lambda_0(y)u_0=    u_0 y,\ \forall y\in {\cal P}_0$$
 is surjective. 
\end{enumerate}
\end{dt} 
Note that the third condition is independent of the pair $(b_0,u_0)$. Since the quotient ${\cal P}_0\backslash{\cal G}\simeq B$ is projective, it follows that ${\cal P}_0$ is a parabolic subgroup of ${\cal G}$. On the other hand, the first condition implies that $U^\chi_\ss$ is ${\cal G}$-invariant, and the induced ${\cal G}$-action on $U^\chi_\ss$ induces a  fibrewise linear  action (which lifts $\gamma_B$) on any vector bundle on $B$ which is associated with the principal $G$-bundle $U^\chi_\ss\to B$. In other words, any such associated vector bundle is naturally a homogeneous vector bundle on the ${\cal G}$-manifold $B$. In particular $E$, $E^\smvee$ become  homogeneous vector bundles on $B$, and $H^0(E^\smvee)^\smvee$ is naturally  a representation space of ${\cal G}$.
\vspace{2mm}

Let $B$ be a projective variety, and  $\pi:E\to B$  be a vector bundle on $B$ such   that $E^\smvee$ is globally generated. This implies that the evaluation map $\vartheta:E\to H^0(E^\smvee)^\smvee$ is fibrewise injective, so it identifies  $E$ with a subbundle of the trivial bundle $B\times H^0(E^\smvee)^\smvee$.  Therefore, putting $C(E)\coloneqq \im(\vartheta)$ we obtain a commutative diagram
\begin{equation}\label{DiagEKempf}
\begin{tikzcd}
E  \ar[d, two heads, "\rho" '] \ar[r, hook] \ar[dr, "\vartheta"] & B\times H^0(E^\smvee)^\smvee  \ar[d, two heads]\\
C(E) \ar[r, hook] & H^0(E^\smvee)^\smvee	
\end{tikzcd}
\end{equation}
where $\rho$ is induced by $\vartheta$.  Recall (see Remark \ref{AffineConeRem}) that $C(E)$ coincides with the affine cone over the projective variety
$$S(E)\coloneqq \P(\vartheta)(\P(E))\subset \P(H^0(E^\smvee)^\smvee).
$$
\begin{pr}\label{KempfProp}
Let $(G,U,F,\chi)$ be a homogeneous, algebraic geometric GLSM presentation  of $(E\textmap{\pi} B,s)$. Suppose that $E^\smvee$ is globally generated.  
Then
\begin{enumerate}
\item The cone $C(E)\subset H^0(E^\smvee)^\smvee$ is a Cohen-Macaulay normal variety.
\item Suppose that $\dim(C(E))=\dim(E)$. Then the morphism $\rho: E \to C(E)$ induced by the proper morphism $\vartheta: E\to H^0(E^\smvee)^\smvee$ is birational, and $C(E)$ has rational singularities.
\item Suppose that $\dim(C(E))=\dim(E)$ and $\mathrm{codim}(U_\us^\chi)\geq 2$. Then there exists an isomorphism $\eta:U\times F\catqot  G\textmap{\simeq} C(E)$ such that the diagram
$$
\begin{tikzcd}
U\times F\catqot_\chi \ar[r, "\simeq"] \ar[d, "q^\chi"']G&E\ar[d, "\rho"] \\
U\times F\catqot G \ar[r, "\simeq", "\eta"']&C(E)	
\end{tikzcd} 
$$
is commutative, in particular the cone $C(E)$ can be identified with the affine  GIT quotient $\Spec(\C[U\times F]^G)$.
\end{enumerate}

\end{pr}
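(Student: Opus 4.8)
The plan is to realise $\rho\colon E\to C(E)$ as a Kempf collapsing and then read the three assertions off the properties of such maps, the third one being a GIT computation. Throughout I write ${\cal H}\coloneqq H^0(E^\smvee)^\smvee$, and fix the base point $b_0=Gu_0$ with its parabolic stabiliser ${\cal P}_0\subset{\cal G}$ as in Definition \ref{HomGLSMDef}, so that $B\simeq{\cal P}_0\backslash{\cal G}$.

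First I would set up the collapsing. By homogeneity the ${\cal G}$-action lifts to the homogeneous bundle $E$ and to ${\cal H}$, and the evaluation $\vartheta\colon E\to{\cal H}$ is ${\cal G}$-equivariant, being the dual of the ${\cal G}$-action on $H^0(E^\smvee)$. The fibre $E_{b_0}$ is a ${\cal P}_0$-representation, and since $E^\smvee$ is globally generated the restriction $\vartheta|_{E_{b_0}}\colon E_{b_0}\to{\cal H}$ is injective, hence identifies $E_{b_0}$ with a ${\cal P}_0$-subrepresentation of ${\cal H}$. Consequently $E\simeq{\cal G}\times_{{\cal P}_0}E_{b_0}$ and $\vartheta$ is exactly the associated collapsing map ${\cal G}\times_{{\cal P}_0}E_{b_0}\to{\cal H}$; its image is the closed ${\cal G}$-stable cone $C(E)={\cal G}\cdot E_{b_0}$, closed because $\vartheta$ factors through the projective morphism $B\times{\cal H}\to{\cal H}$ and is therefore proper. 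Kempf's collapsing theorem \cite{Ke} then yields at once that $C(E)$ is a normal Cohen--Macaulay variety, which is part (1).

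For part (2), the morphism $\rho$ is proper, surjective onto $C(E)$, and $E$ is irreducible ($B$ being a homogeneous space). If $\dim C(E)=\dim E$ then $\rho$ is generically finite. To upgrade this to birationality I would exploit the fibrewise injectivity of $\vartheta$: the fibre $\rho^{-1}(h)$ is identified with $\{\,b\in B\mid h\in E_b\subset{\cal H}\,\}$, equivalently with the zero locus of the image of the constant section $h$ in the quotient bundle $(B\times{\cal H})/E$. The equal-dimension hypothesis forces this locus to be $0$-dimensional for generic $h$, and one checks that it is a single reduced point, so $\rho$ is birational; rational singularities of $C(E)$ then follow from Kempf's theorem, since $\rho$ is a proper birational morphism from the smooth variety $E$ onto the normal variety $C(E)$. \emph{I expect the delicate point to be precisely the passage from ``generically finite'' to ``degree one'':} equal dimension only provides finite generic fibres, and one must genuinely exhibit the generic fibre as a single point. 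This is the true content of the equivalence ``$\rho$ birational $\iff\dim E=\dim C(E)$''.

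Finally, for part (3) the strategy is to identify coordinate rings. Condition (\ref{signs}) of Definition \ref{GLSMDef} and Proposition \ref{SSProd} give $U^\chi_\ss\times F=(U\times F)^\chi_\ss$, hence $E=(U\times F)\catqot_\chi G$, which supplies the top isomorphism and the affinization morphism $q^\chi$. On the other hand parts (1)--(2) let me invoke Lemma \ref{l5}, so $R=\C[C(E)]\simeq\bigoplus_{m\ge 0}H^0(B,S^m{\cal E}^\smvee)$. Since $p\colon U^\chi_\ss\to B$ is a principal $G$-bundle (condition (1) of Definition \ref{GLSMDef}) and $S^m{\cal E}^\smvee$ is associated to the $G$-module $S^mF^\smvee=\C[F]_m$, descent identifies $H^0(B,S^m{\cal E}^\smvee)$ with $(\C[U^\chi_\ss]\otimes\C[F]_m)^G$. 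The assumption $\mathrm{codim}_U(U^\chi_\us)\ge 2$ together with the normality of $U\times F$ gives, by algebraic Hartogs, $\C[U^\chi_\ss\times F]=\C[U\times F]$, whence $H^0(B,S^m{\cal E}^\smvee)=\C[U\times F]^G_m$ (degree $m$ in the $F$-variables). Summing over $m$ yields a graded isomorphism $R\simeq\C[U\times F]^G$, i.e. the desired $\eta\colon(U\times F)\catqot G\simto C(E)$. Commutativity of the square is then checked on global functions: $C(E)$ is affine, so both composites $(U\times F)\catqot_\chi G\to C(E)$ are determined by the induced maps $\C[C(E)]\to\C[U\times F]^G$, and both are the canonical identification (for $q^\chi$ this is the structure map of the affinization, for $\rho$ it is $\rho^\#$, an isomorphism on global sections since $\rho_*{\cal O}_E={\cal O}_{C(E)}$). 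Hence the square commutes and $C(E)\simeq\Spec(\C[U\times F]^G)$, as required.
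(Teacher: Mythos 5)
Your realisation of $\rho$ as a Kempf collapsing is the right frame, but part (1) of your argument has a genuine gap: Kempf's Theorem 0 does not apply to an arbitrary homogeneous subbundle ${\cal G}\times_{{\cal P}_0}E_{b_0}\hookrightarrow {\cal G}\times_{{\cal P}_0}{\cal H}$. It requires the ${\cal P}_0$-module $E_{b_0}$ (equivalently $\vartheta(E_{b_0})$) to be \emph{completely reducible}, i.e.\ the unipotent radical of the parabolic ${\cal P}_0$ must act trivially on it. You never verify this, and it is precisely the step for which condition (3) of Definition \ref{HomGLSMDef} was designed: the ${\cal P}_0$-action on $E_{b_0}\simeq F$ is induced from the $G$-action on $F$ via the morphism $\lambda_0:{\cal P}_0\to G$, and since $\lambda_0$ is \emph{surjective} and $G$ is reductive, the module is completely reducible. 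This observation is the entire content of the homogeneity hypothesis in part (1), and writing ``Kempf's collapsing theorem then yields at once'' without it leaves the theorem inapplicable. The same hypothesis is silently needed again in your part (2): for a proper birational $\rho$ from a smooth variety, $\rho_*{\cal O}_E={\cal O}_{C(E)}$ does follow from normality, but the vanishing $R^i\rho_*{\cal O}_E=0$ for $i>0$ (i.e.\ rational singularities) is \emph{not} automatic — it is the second statement of Kempf's Theorem 0, valid under complete reducibility.

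In part (2) you also replace the paper's one-line citation of Kempf's Proposition 2(c) (equal dimension implies $\rho$ birational for such collapsings) by a direct argument, but you leave exactly the decisive step as an assertion: equal dimension gives generic finiteness, and generic smoothness gives a finite \emph{reduced} generic fibre, yet to conclude it is a \emph{single} point you would still need connectedness of the generic fibre $\{\,b\in B \mid h\in E_b\,\}$ (or some other input), which — as you yourself flag — you do not supply. The honest fix is to quote Kempf's Proposition 2(c), as the paper does. Part (3), by contrast, is correct and is essentially the paper's proof rearranged: the paper extends $\Sigma:(U\times F)^\chi_\ss\to C(E)$ to a $G$-invariant $\tilde\Sigma:U\times F\to C(E)$ by Hartogs (using Proposition \ref{SSProd} to get $\mathrm{codim}\big((U\times F)\setminus(U\times F)^\chi_\ss\big)\geq 2$) and then proves $\eta^*$ is an isomorphism from the factorisation $\rho^*=j^*\circ\eta^*$ together with $\rho_*{\cal O}_E={\cal O}_{C(E)}$; you instead assemble the graded isomorphism $R\simeq\C[U\times F]^G$ from Lemma \ref{l5}, descent along the principal bundle, and the same Hartogs step, and then check commutativity on coordinate rings. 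Those are the same ingredients in a different order, and that part of your write-up would stand once (1) and (2) are repaired.
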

\begin{proof}
(1) The linear subspace $\vartheta(E_{b_0})\subset H^0(E^\smvee)^\smvee$ is ${\cal P}_0$-invariant.  Moreover, the left ${\cal P}_0$-action on $\vartheta(E_{b_0})$ is induced is induced by the $G$-action on $\{u_0\}\times F\simeq F$ via the group morphism $\lambda_0$ intervening  in Definition \ref{HomGLSMDef}. Since $\lambda_0$ is surjective, and $G$ is reductive, it follows that  the ${\cal P}_0$ representation space $\vartheta(E_{b_0})$ is completely reducible. Since ${\cal G}$ acts transitively on $B$ one has $E={\cal G}E_{b_0}$, so
$$C(E)\coloneqq \vartheta(E)=\vartheta({\cal G}E_{b_0})={\cal G}\vartheta(E_{b_0}).
$$
The first claim follows now from \cite[Theorem 0]{Ke}.
\vspace{2mm}\\ 
(2) Since $\dim(C(E))=\dim(E)$ it follows by \cite[Proposition 2(c)]{Ke} that $\rho$ is birational, so the claim follows from the second statement of  \cite[Theorem 0]{Ke}.
\vspace{2mm}\\  
(3) Since $\mathrm{codim}(U\setminus U^\chi_\ss)\geq 2$, we also have $\mathrm{codim}(U\times F)\setminus (U\times F)^\chi_\ss\geq 2$ by Proposition \ref{SSProd}, so the composition 
$$(U\times F)^\chi_\ss\to  E\textmap{\vartheta} H^0(E^\smvee)^\smvee $$
 extends to a $G$-invariant morphism  $U\times F\to H^0(E^\smvee)^\smvee$. Therefore we obtain a $G$-invariant, surjective extension
$$\tilde \Sigma: U\times F\to C(E)
$$
of the composition $\Sigma:(U\times F)^\chi_\ss\to  E\textmap{\rho} C(E)$.  We claim that the   morphism 
$$\eta: U\times F\catqot G\to C(E)$$ 
induced by $\tilde \Sigma$  is an isomorphism. Since $\eta$ is a morphism of affine schemes, it suffices to prove that the induced ring morphism 
$$\eta^*:\C[C(E)]=H^0({\cal O}_{C(E)})\to \C[U\times F]^G$$ 
is an isomorphism. The inclusion morphism $j: (U\times F)^\chi_\ss\hookrightarrow U\times F$ induces a restriction monomorphism 
$$ j^*:\C[U\times F]^G\to \C[(U\times F)^\chi_\ss]^G=H^0({\cal O}_{E}).$$
The composition $j^*\circ \eta^*: H^0({\cal O}_{C(E)})\to H^0({\cal O}_{E})$ coincides with the morphism $\rho^*$ induced by $\rho$. On the other hand, since $\rho$ is proper and birational,  {\it and $C(E)$ is normal}, it follows that $\rho_*({\cal O}_{E})={\cal O}_{C(E)}$. Therefore $\rho^*=j^*\circ \eta^*$ is an isomorphism. Since $j^*$ is a monomorphism, it follows that $j^*$ and $\eta^*$ are both isomorphisms.
\end{proof}

Let $G$ be a connected reductive group, $W$   a finite dimensional $G$-representation space. The set of stable points with respect to the trivial character of $G$ is
$$W_\st=\{w\in W|\ Gw \hbox{ is closed},\ G_w\hbox { is finite}\}.
$$
This set is Zariski open in $W$.

\begin{lm}\label{GorCritLm} (Gorenstein criterion) Suppose that $\mathrm{codim}(W\setminus W_\st)\geq 2$, and the $G$-representation $\det W$ is trivial. Then $\C[W]^G$ is a Gorenstein ring.	 
\end{lm}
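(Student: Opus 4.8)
The plan is to show that $R \coloneqq \C[W]^G$ is a normal, Cohen--Macaulay graded domain whose canonical module is free, since for such a ring the two statements ``$R$ is Gorenstein'' and ``$\omega_R \cong R(a)$ for some $a\in\Z$'' are equivalent. That $R$ is a finitely generated graded $\C$-algebra with $R_0=\C$ is Hilbert's theorem for reductive groups; that $R$ is a normal domain follows because $\C[W]$ is a normal domain and invariants of a normal domain under a reductive group are again normal; and that $R$ is Cohen--Macaulay is the Hochster--Roberts theorem. Thus everything reduces to trivialising the canonical (dualizing) sheaf $\omega_X$ of $X\coloneqq\Spec R$, which, $X$ being normal and Cohen--Macaulay, is a reflexive sheaf of rank $1$. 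Let $p\colon W_\st\to X_\st\coloneqq W_\st/G$ be the geometric quotient on the stable locus, an open subvariety of $X$. Since $\omega_X$ is reflexive, it is determined by its restriction to any open set whose complement has codimension $\geq 2$; so it suffices to prove that $X_\st$ is such a big open and that $\omega_{X_\st}$ is trivial.

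First I would check that $\mathrm{codim}_X(X\setminus X_\st)\geq 2$. The stable locus is saturated, $W_\st=p^{-1}(X_\st)$, so $W\setminus W_\st=p^{-1}(X\setminus X_\st)$. The hypothesis forces $W_\st\neq\emptyset$, hence dense, so the generic orbit is stable of dimension $\dim G$ and $\dim X=\dim W-\dim G$. By the theorem on the dimension of fibres every fibre of the dominant map $p$ has dimension $\geq \dim W-\dim X=\dim G$; applying this to the surjection $p\colon W\setminus W_\st\to X\setminus X_\st$ gives $\dim(X\setminus X_\st)\leq \dim(W\setminus W_\st)-\dim G\leq(\dim W-2)-\dim G=\dim X-2$, as wanted.

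Next I would produce the trivialisation of $\omega_{X_\st}$ from an equivariant trivialisation upstairs. The form $\eta\coloneqq dx_1\wedge\dots\wedge dx_n$ generates $\omega_{W}=\det\Omega^1_W\cong\O_W\otimes(\det W)^{-1}$, and $G$ acts on $\eta$ through the character $(\det W)^{-1}$, which is trivial by assumption; hence $\eta$ is a $G$-invariant nowhere-vanishing section of $\omega_{W_\st}$. On the locus where $G$ acts freely, $p$ is a principal $G$-bundle whose relative dualizing sheaf is $\O\otimes\det\g^{\smvee}$ with $G$ acting through $(\det\mathrm{Ad}_G)^{-1}=1$ (trivial, as $G$ is connected reductive, so $\det\mathrm{Ad}_G=1$); thus $\omega_{W_\st}\cong p^{*}\omega_{X_\st}$ equivariantly, and the invariant horizontal form $\eta$ descends to a nowhere-vanishing section of $\omega_{X_\st}$ there. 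To push the trivialisation across the finite-stabiliser locus I would use Luna's slice theorem: étale-locally near a closed stable orbit $Gw$ the quotient $X_\st$ looks like $N/F$, where $F=G_w$ is finite and $N$ is the slice representation. As $F$-modules $W\cong\g\oplus N$ with $F$ acting on $\g$ by $\mathrm{Ad}$, so $\det N|_F=(\det W|_F)\cdot(\det\mathrm{Ad}_G|_F)^{-1}=1$, i.e. $F\subset\SL(N)$. A subgroup of $\SL(N)$ contains no pseudo-reflections, so by Watanabe's theorem $N/F$ is Gorenstein and the $F$-invariant form $\eta|_N$ descends to a nowhere-vanishing section of $\omega_{N/F}$. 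Gluing these local trivialisations (all obtained from the single global form $\eta$) yields $\omega_{X_\st}\cong\O_{X_\st}$; hence $\omega_X=j_*\omega_{X_\st}\cong\O_X$ and $\omega_R\cong R(a)$, so $R$ is Gorenstein.

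The hard part will be the last step: controlling $\omega_{X_\st}$ across the (possibly codimension-one) locus of points with nontrivial finite stabiliser. On the free locus the principal-bundle descent is routine, but in general the quotient acquires finite quotient singularities along a divisor, and the whole point of the determinant hypothesis is to guarantee, via the identity $\det N|_F=1$, that these are $\SL$-quotients carrying a trivial canonical sheaf rather than contributing a nonzero canonical class. Making the descent of the single invariant volume form $\eta$ rigorous through the slice charts, and verifying that the local trivialisations are compatible so that they assemble into a global generator of $\omega_{X_\st}$, is where the essential care is needed; the finite-generation, normality, Cohen--Macaulayness and the codimension bookkeeping are comparatively standard.
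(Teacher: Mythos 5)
Your proposal is correct in substance, but it takes a genuinely different route from the paper. The paper's proof is a three-line verification of the hypotheses of Knop's criterion \cite[Satz 2, p.\ 41]{Kn2}: the locus $W_s$ of points with positive-dimensional stabilizer sits inside $W\setminus W_\st$, so $\mathrm{codim}(W_s)\geq 2$; the hypothesis forces $W_\st\neq\emptyset$, so the generic orbit is closed; and since $G$ is connected reductive, $\det(\mathrm{ad})$ is trivial, whence $\det W=\det(\mathrm{ad})$ --- exactly the character condition in Knop's Satz. You instead re-derive the relevant special case of Knop's theorem from scratch: reduce Gorenstein to freeness of $\omega_R$ via Hochster--Roberts and normality, push the codimension hypothesis down to $\mathrm{codim}_X(X\setminus X_\st)\geq 2$ by saturation of $W_\st$ and fibre dimension, and trivialise $\omega_{X_\st}$ by descending the invariant volume form through Luna slice charts, where your identity $\det N|_F=(\det W|_F)\cdot(\det \mathrm{Ad}|_F)^{-1}=1$ (forcing $F\subset\SL(N)$, hence Watanabe/Gorenstein local models with no pseudo-reflection divisor) is precisely the character $\det W\cdot\det(\mathrm{Ad})^{-1}$ that governs Knop's computation of the canonical module. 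What each approach buys: the paper's is short but opaque, outsourcing all content to Knop (whose hypotheses are in fact weaker --- he needs only $\mathrm{codim}(W_s)\geq 2$, not codimension $2$ for the full unstable locus, which your fibre-dimension step consumes); yours is self-contained and geometrically transparent, exhibiting the mechanism behind the determinant hypothesis. The one step you rightly flag --- gluing the slice-chart trivialisations --- is closable without new ideas: fix an $\mathrm{Ad}$-invariant generator $\tau$ of $\det\g$ (it exists since $\det(\mathrm{Ad})=1$) and contract $\eta$ along the fundamental vector fields of $\tau$; this produces a single canonical rational section of $\omega_{X_\st}$ whose pullback relation to $\eta$ holds globally, and generation is then a pointwise check in each slice model, so there is no compatibility to verify. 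With that observation your argument is complete, and amounts to a direct proof of the case of Knop's Satz 2 actually used here.
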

\begin{proof}
The closed set 
$$W_s\coloneqq \{w\in W|\ \dim(G_w)>0\}
$$
defined in \cite[p. 40]{Kn2} is contained in  $W\setminus W_\st$, so the assumption $\mathrm{codim}(W\setminus W_\st)\geq 2$  implies  $\mathrm{codim}(W_s)\geq 2$. The same condition also implies that $W_\st\ne\emptyset$, in particular a general $G$-orbit in $W$ is closed. Since $G$ is connected, the determinant $\det(\ad)$ of the adjoint representation is  trivial (because any connected reductive group is unimodular), so  $\det(W)=\det(\ad)$. The claim follows from Knop's criterion \cite[Satz 2, p. 41]{Kn2}. 
\end{proof}
See \cite[5.1]{SVdB} for further remarks. Using Proposition \ref{KempfProp}, a well-known Cohen-Macaulay criterion for affine GIT quotients \cite[Corollaire, p. 66]{Bout} and Lemma \ref{GorCritLm}, we obtain:

\begin{co} \label{PropertiesC(E)}
 Let $(G,U,F,\chi)$ be a homogeneous, algebraic geometric GLSM presentation  of $(E\textmap{\pi} B,s)$. Suppose that $E^\smvee$ is globally generated, $\dim(E)=\dim(C(E))$ and $\mathrm{codim}(U_\us^\chi)\geq 2$. Then
\begin{enumerate} 
\item The cone $C(E)$ is normal, Cohen-Macaulay,  has rational singularities, and is canonically isomorphic to the affine quotient $\Spec(\C[U\times F]^G)$. 
\item Suppose that $G$ is connected, the $G$-representation $\det(U\times F)$ is trivial, and  $\mathrm{codim}\big((U\times F)\setminus(U\times F)_\st\big)\geq 2$. Then $C(E)$ is also Gorenstein.
\end{enumerate}
\end{co}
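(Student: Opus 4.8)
The plan is to assemble the three cited inputs, matching hypotheses at each step; no new analytic content is needed. For part (1), I would first observe that the standing hypotheses --- $E^\smvee$ globally generated, $\dim(E)=\dim(C(E))$, and $\mathrm{codim}(U_\us^\chi)\geq 2$ --- are precisely those required to invoke all three parts of Proposition \ref{KempfProp}. Part (1) of that proposition gives that $C(E)$ is a normal Cohen--Macaulay variety; part (2), using $\dim(C(E))=\dim(E)$, gives that $\rho$ is birational and that $C(E)$ has rational singularities; and part (3), which additionally needs $\mathrm{codim}(U_\us^\chi)\geq 2$, produces the canonical isomorphism $\eta:U\times F\catqot G\simto C(E)$, i.e. $C(E)\cong\Spec(\C[U\times F]^G)$. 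These four facts are exactly the assertions of part (1). The Cohen--Macaulay property may equivalently be read off from the quotient side: writing $W\coloneqq U\times F$, the vector space $W$ is smooth and hence has rational singularities, so Boutot's corollary \cite[Corollaire, p. 66]{Bout} applied to the reductive quotient $\Spec(\C[W]^G)$ shows it has rational singularities, in particular is Cohen--Macaulay; via $\eta$ this is consistent with the Kempf description.

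For part (2), I would transport the Gorenstein question across the isomorphism $\eta$ established in part (1). Setting $R=\C[C(E)]$ and $W=U\times F$, part (1) gives $R\cong\C[W]^G$, so it suffices to prove that $\C[W]^G$ is Gorenstein. The additional hypotheses of part (2) --- $G$ connected, the $G$-representation $\det(U\times F)=\det W$ trivial, and $\mathrm{codim}\big(W\setminus W_\st\big)\geq 2$ --- are exactly the hypotheses of the Gorenstein criterion of Lemma \ref{GorCritLm} (same $W$, same notion of stability with respect to the trivial character). Applying that lemma shows $\C[W]^G$ is Gorenstein, hence $C(E)$ is Gorenstein.

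There is no genuine obstacle here: the corollary is a packaging of Proposition \ref{KempfProp}, Boutot's corollary, and Lemma \ref{GorCritLm}. The one point requiring care is the exact alignment of hypotheses across the three inputs. In particular, the normality of $C(E)$ furnished by Proposition \ref{KempfProp}(1) is what makes $\rho_*({\cal O}_E)={\cal O}_{C(E)}$, and hence the isomorphism $\eta$ of Proposition \ref{KempfProp}(3), valid; and the codimension-$2$ condition must be invoked for the correct space --- on the unstable locus in $U$ for part (1) (through Proposition \ref{KempfProp}(3)), but on the non-stable locus in $W=U\times F$ for part (2) (through Lemma \ref{GorCritLm}). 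Once these bookkeeping checks are made, both assertions follow at once.
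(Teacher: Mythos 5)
Your proof is correct and follows essentially the same route as the paper, which obtains the corollary precisely by combining Proposition \ref{KempfProp} (all three parts), Boutot's Cohen--Macaulay criterion \cite[Corollaire, p. 66]{Bout}, and Lemma \ref{GorCritLm} applied to $W=U\times F$, exactly as you assemble them. Your bookkeeping remarks --- in particular that the codimension-$2$ hypothesis enters through $U_\us^\chi$ in part (1) via Proposition \ref{KempfProp}(3) but through $W\setminus W_\st$ in part (2) via Lemma \ref{GorCritLm} --- correctly reflect the intended alignment of hypotheses.
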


Taking into account Remark \ref{AffineConeRem} we obtain 
    
\begin{co}\label{PropertiesS(E)}
 Let $(G,U,F,\chi)$ be a homogeneous, algebraic geometric GLSM presentation  of $(E\textmap{\pi} B,s)$. Suppose that $E^\smvee$ is globally generated, $\dim(E)=\dim(C(E))$ and $\mathrm{codim}(U_\us^\chi)\geq 2$. Then the projective variety $S(E)\coloneqq \im(\P(\vartheta))\subset \P(H^0(E^\smvee)^\smvee)$ 	has the following properties:
 \begin{enumerate}
 \item $S(E)$ is projectively normal, arithmetically	Cohen-Macaulay and   its affine cone has rational singularities.
 \item If $G$ is connected, the 1-dimensional $G$-representation $\det(U\times F)$ is trivial, and   $\mathrm{codim}\big((U\times F)\setminus(U\times F)_\st\big)\geq 2$, then $S(E)$ is also arithmetically Gorenstein.
 \end{enumerate}

\end{co}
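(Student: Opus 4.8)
The plan is to deduce every assertion about $S(E)$ from the corresponding property of its affine cone, which by Remark \ref{AffineConeRem} is exactly $C(E)$, and then to quote Corollary \ref{PropertiesC(E)}. The only mechanism needed is the standard dictionary between a projectively embedded variety $X\subset\P^n$ and the ring $R_X\coloneqq \C[x_0,\dots,x_n]/I_X$ cut out by its saturated homogeneous ideal $I_X$: by definition $X$ is \emph{projectively normal} (resp.\ \emph{arithmetically Cohen-Macaulay}, resp.\ \emph{arithmetically Gorenstein}) exactly when $R_X$ is a normal domain (resp.\ Cohen-Macaulay, resp.\ Gorenstein), equivalently when the affine cone $\Spec(R_X)$ is normal (resp.\ Cohen-Macaulay, resp.\ Gorenstein).

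First I would record the ring identification that makes this dictionary applicable here. Since $C(E)$ is $\C^*$-invariant, its reduced ideal $I_{C(E)}\subset\C[H^0(E^\smvee)^\smvee]$ is homogeneous; and because $C(E)$ is the reduced affine cone over $S(E)$ (Remark \ref{AffineConeRem}), this ideal is precisely the saturated homogeneous ideal of $S(E)\subset\P(H^0(E^\smvee)^\smvee)$. Hence $R\coloneqq \C[C(E)]$ is exactly the homogeneous coordinate ring of $S(E)$ for the given embedding, and $\Spec(R)$ is the affine cone over $S(E)$.

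Given this, part (1) follows at once. Corollary \ref{PropertiesC(E)}(1) states that $C(E)$ is normal, Cohen-Macaulay, and has rational singularities. Normality of $R$ is by definition projective normality of $S(E)$; the Cohen-Macaulay property of $R$ is arithmetic Cohen-Macaulayness of $S(E)$; and the assertion about rational singularities of the affine cone of $S(E)$ transports verbatim, the cone being literally $C(E)$. Part (2) is obtained identically from Corollary \ref{PropertiesC(E)}(2): under the extra hypotheses that $G$ is connected, $\det(U\times F)$ is a trivial representation, and $\mathrm{codim}\big((U\times F)\setminus(U\times F)_\st\big)\geq 2$, that corollary gives that $C(E)$ is Gorenstein, i.e.\ $R$ is a Gorenstein ring, which is precisely arithmetic Gorensteinness of $S(E)$.

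I expect no genuine geometric obstacle: once Corollary \ref{PropertiesC(E)} is in hand, this statement is a pure translation through the cone/projective-variety correspondence. The one point deserving care---the real content of the argument---is the ring identification of the second paragraph, namely verifying that the reduced ideal of $C(E)$ coincides with the saturated homogeneous ideal of $S(E)$, so that $R$ really is the coordinate ring appearing in the definitions of the three ``arithmetic'' properties.
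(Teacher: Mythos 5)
Your proposal is correct and matches the paper's own (essentially one-line) argument: the paper deduces this corollary directly from Corollary \ref{PropertiesC(E)} via Remark \ref{AffineConeRem}, exactly the cone-to-projective-variety translation you describe. The one point you flag for care---that the reduced ideal of $C(E)$ is the saturated homogeneous ideal of $S(E)$---is automatic, since the radical homogeneous vanishing ideal of a nonempty projective variety is saturated, so $R=\C[C(E)]$ is indeed the homogeneous coordinate ring of $S(E)$.
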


\subsection{Geometric applications}\label{ExSect}

\subsubsection{A general set up}

In this section we identify an important class of examples to which Theorem \ref{FourthEq} can be applied. Let $V$, $Z$ be complex vector spaces of dimensions $N$, $k$ respectively with $1\leq k< N$. Choose $r\in \N_{>0}$, $(d_1,\dots,d_r)\in \N_{>0}^r$, and for any $1\leq i\leq r$,  choose a $\GL(Z)$-invariant subspace $F_i$ of the tensor power $\otimes^{d_i}Z^\smvee$.  Thus $F_i^\smvee$ is a polynomial $\GL(Z)$-representation of degree $d_i$ \cite[5.3, 5.8, 5.9]{KrPr}. Put 
$$F\coloneqq \bigoplus_{i=1}^r F_i.$$

The class of these $\GL(Z)$-representations coincides with the class of duals of finite dimensional polynomial representations of $\GL(Z)$, i.e. each such $F^\smvee$ is isomorphic to a finite direct sum of Schur modules
$$F^\smvee=\bigoplus_{\lambda\in P(k)} N_\lambda\otimes S^\lambda Z.
$$
Here $P(k)$ denotes the set of partitions $\lambda=(\lambda_1,\cdots,\lambda_k)$ with $\lambda_1\geq\cdots\geq\lambda_k\geq 0$.

Now consider the $\GL(Z)$ representation $U=\Hom(V,Z)$. Choosing $t\in \N_{>0}$ we have 
$$U^{\det^t}_\ss=U^{\det^t}_\st=\Hom(V,Z)^{\rm epi},$$
 and the quotient $U^{\det^t}_\ss/\GL(Z)$ can be identified with the Grassmannian $\Gr_{k}(V^\smvee)$  (see Example \ref{ExGrass}). Denote by   $E$ the vector bundle  associated with the principal $\GL(Z)$-bundle $U^{\det^t}_\ss\to \Gr_{k}(V^\smvee)$ and the representation $F= \bigoplus_{\lambda\in P(k)} N_\lambda^\smvee\otimes S^\lambda Z^\smvee$, and by $E_i$ the associated bundle with fiber $F_i$. Then
 $$E=\bigoplus_{\lambda\in P(k)} N_\lambda^\smvee\otimes S^\lambda T, $$
where $T$ is the tautological subbundle of $\Gr_k(V^\smvee)$. By the Borel-Weil theorem we get 
$$H\coloneqq H^0(\Gr_k(V^\smvee), {\cal E}^\smvee)=\bigoplus_{\lambda\in P(k)} N_\lambda\otimes S^\lambda V.
$$
The condition $1\leq k<N$ implies  $\mathrm{codim}(U^{\det^t}_\us)\geq 2$, so  
$$H^0(\Gr_{k}(V^\smvee),E_i^\smvee)={\rm Map}_{\GL(Z)}(U^{\det^t}_\ss,F_i^\smvee)={\rm Map}_{\GL(Z)}(U,F_i^\smvee),$$
where ${\rm Map}_{\GL(Z)}$ stands for the set of  $\GL(Z)$-equivariant regular maps. Any $\GL(Z)$-equivariant regular map $U\to F_i^\smvee$ is homogeneous of degree $d_i$. 
 Therefore the data of a section $s\in H^0(\Gr_{k}(V^\smvee),E^\smvee)$ is equivalent to the data of  a system $\sigma=(\sigma_1,\dots,\sigma_r)$  of 
 homogeneous covariants $\sigma_i\in (\C[U]\otimes F_i^\smvee)^{\GL(Z)}$ on $U$ of type $F_i^\smvee$ \cite[p. 9]{KrPr}. This system   can be regarded as a  covariant  of type $F^\smvee$ on $U$ which extends the $\GL(Z)$-equivariant map $\hat s:U^{\det^t}_\ss\to F^\smvee$ associated with $s$.   On the other hand, using formula (\ref{optimal+GR}) it follows that any optimal destabilizing element $\xi\in \Xi^{\det^t}(u)$ of  an unstable  point  $u\in  U_{\us}^{\det^t}$ acts with non-negative weights on tensor  powers $\otimes^{d_i}Z^\smvee$, so also on $F_i$. Therefore  condition (\ref{signs})	in Definition \ref{GLSMDef} is  satisfied, hence the 4-tuple 
$$(\GL(Z),\Hom(V,Z),F=\oplus_{i=1}^rF_i, {\det}^t) $$
 is a GLSM presentation of the pair $(E\to \Gr_k(V^\smvee),s)$.

The obvious right $\GL(V)$-action on $U=\Hom(V,Z)$ satisfies the conditions of Definition \ref{HomGLSMDef}, hence any such GLSM presentation   is homogeneous. We will apply the general results proved in sections \ref{TiltingIH} and \ref{HomGLSMSect} to this class of    GLSM's.

\begin{re}
The cones $C(S^\lambda T)\subset S^\lambda V^\smvee$ are the higher rank varieties first studied by O. Porras \cite{Po}	, and later by J. Weyman \cite[Chapter 7]{We}. We refer to the more general cones 
$$C(\bigoplus_{\lambda\in P(k)} N_\lambda^\smvee\otimes S^\lambda T)\subset \bigoplus_{\lambda\in P(k)} N_\lambda^\smvee \otimes S^\lambda V^\smvee$$
as generalized higher rank varieties.
\end{re}

Let $P(k,N-k)\subset P(k)$ be the set of partitions $(\alpha_1,\cdots,\alpha_k)$ with $\alpha_1\leq N-k$. Kapranov \cite{Ka} has shown that the collection of locally free sheaves $$(S^\alpha {\cal U}^\smvee)_{\alpha\in P(k,N-k)}$$  is a full strongly exceptional collection on $\Gr_k(V^\smvee)$. The Kapranov bundle
$$
{\cal T}_0\coloneqq \bigoplus_{\alpha\in P(k,N-k)} S^\alpha {\cal U}^\smvee
$$
is therefore a tilting bundle on $\Gr_k(V^\smvee)$. Let $E=\bigoplus_{\lambda\in P(k)} N_\lambda^\smvee\otimes S^\lambda T$ be the bundle above, and denote by $\pi: E\to  \Gr_k(V^\smvee)$ its bundle projection. Recall (Corollary \ref{co3} (3)) that $\pi^*({\cal T}_0)$ is a tilting object in $D^b(\Qcoh E)$ provided the following cohomology vanishing holds true:
\begin{equation}\label{eq:*}
H^i\big(\Gr_k(V^\smvee), {\cal T}_0^\smvee\otimes {\cal T}_0\otimes (\bigoplus_{m\geq 0} S^m {\cal E}^\smvee)\big)=0 \ \forall i>0.	
\end{equation}

\begin{thry}\label{th:2-11} Let $E=\bigoplus_{\lambda\in P(k)} N_\lambda^\smvee\otimes S^\lambda T$, let ${\cal T}_0$ be the Kapranov bundle on on $\Gr_k(V^\smvee)$. Suppose that the cohomology vanishing condition (\ref{eq:*}) holds true, and let $s\in H^0(\Gr_k(V^\smvee),{\cal E}^\smvee)$ be a regular section. Then there exists an exact equivalence
$$D^b(\coh Z(s))\simeq D^{\mathrm{gr}}_{\mathrm{sg}}(\Lambda/ s\Lambda).$$	
\end{thry}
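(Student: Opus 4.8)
The plan is to deduce this statement directly from Theorem \ref{FourthEq}, so the entire proof reduces to verifying that the hypotheses of that theorem hold in the present situation with $B=\Gr_k(V^\smvee)$. Three things need to be checked: that $E$ is realized as a subbundle of a trivial bundle $B\times H^\smvee$ with $H$ finite dimensional, that ${\cal T}_0$ is a locally free sheaf classically generating $D^b(\coh B)$, and that ${\cal T}=\pi^*({\cal T}_0)$ is a tilting sheaf on $E$. Once these are in place, the element $s\in H$ is regular by assumption and the theorem applies verbatim.

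First I would establish the subbundle structure. Since the tautological subbundle $T\hookrightarrow V^\smvee\otimes{\cal O}_B$ dualizes to a surjection $V\otimes{\cal O}_B\twoheadrightarrow T^\smvee$, the bundle $T^\smvee$ is globally generated; hence so is each Schur power $S^\lambda T^\smvee$ and therefore ${\cal E}^\smvee=\bigoplus_{\lambda\in P(k)}N_\lambda\otimes S^\lambda T^\smvee$. By the Borel-Weil theorem $H\coloneqq H^0(\Gr_k(V^\smvee),{\cal E}^\smvee)=\bigoplus_{\lambda\in P(k)}N_\lambda\otimes S^\lambda V$ is finite dimensional, and global generation makes the evaluation map $\vartheta:E\to H^\smvee$ fibrewise injective. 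This is precisely the setting of diagram (\ref{DiagEKempf}): $E$ is identified with a subbundle of $B\times H^\smvee$, as required.

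Next, ${\cal T}_0$ is the Kapranov bundle, and by Kapranov's theorem \cite{Ka} the collection $(S^\alpha {\cal U}^\smvee)_{\alpha\in P(k,N-k)}$ is full and strongly exceptional on $\Gr_k(V^\smvee)$; hence ${\cal T}_0$ is a locally free tilting bundle on $B$ and, in particular, classically generates $D^b(\coh B)$. It then remains to see that ${\cal T}=\pi^*({\cal T}_0)$ is tilting on $E$. By Corollary \ref{co3}(3) this is equivalent to the cohomology vanishing
$$H^i\big(\Gr_k(V^\smvee),{\cal T}_0^\smvee\otimes{\cal T}_0\otimes(\bigoplus_{m\geq 0}S^m{\cal E}^\smvee)\big)=0 \quad \forall i>0,$$
which is exactly hypothesis (\ref{eq:*}). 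Setting $\Lambda\coloneqq \End_E({\cal T})$, all the assumptions of Theorem \ref{FourthEq} are now satisfied, and it yields the claimed equivalence $D^b(\coh Z(s))\simeq D^{\mathrm{gr}}_{\mathrm{sg}}(\Lambda/ s\Lambda)$.

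The conceptual difficulty has already been discharged in the general machinery of Section \ref{AlgSection}: the $\C^*$-equivariant graded tilting equivalence (Theorem \ref{gap}), the Isik-Shipman theorem, and the Orlov-type comparison (Proposition \ref{Orlov-newProp}) are all packaged into Theorem \ref{FourthEq}. For the present statement the only genuine input is therefore the tilting property of ${\cal T}$, which Corollary \ref{co3} has isolated as the single explicit cohomological condition (\ref{eq:*}); since this is assumed, there is no remaining obstacle in the proof itself. I would emphasize that the substantive work for the concrete families in the subsequent sections—complete intersections, isotropic Grassmannians, and Beauville-Donagi fourfolds—lies precisely in verifying (\ref{eq:*}) case by case, which is carried out there rather than here.
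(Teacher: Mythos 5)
Your proposal is correct and follows essentially the same route as the paper's own proof: both reduce the statement to Theorem \ref{FourthEq} by checking that $E$ embeds as a subbundle of the trivial bundle $\Gr_k(V^\smvee)\times H^\smvee$ (via global generation of ${\cal E}^\smvee$ and Borel-Weil), that the Kapranov bundle ${\cal T}_0$ classically generates $D^b(\coh B)$, and that the assumed vanishing (\ref{eq:*}) makes ${\cal T}=\pi^*({\cal T}_0)$ tilting on $E$ by Corollary \ref{co3}(3). Your write-up is in fact somewhat more detailed than the paper's (which asserts the global generation and tilting facts without spelling out the dualized tautological surjection), but the argument is the same.
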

\begin{proof}
We verify the assumptions of Theorem \ref{FourthEq}. We know that ${\cal T}_0$ is a tilting bundle on $\Gr_k(V^\smvee)$, and that ${\cal T}=\pi^*({\cal T}_0)$ is a tilting object in $D^b(\Qcoh E)$ provided the vanishing condition  (\ref{eq:*}) holds. Clearly $H=\bigoplus_{\lambda\in P(k)} N_\lambda\otimes S^\lambda V$ generates ${\cal E}^\smvee$, so $E$ is a subbunde of the trivial bundle $\Gr_k(V^\smvee)\times H^\smvee$.
\end{proof}
Note that the cone $C(E)$ is normal and Cohen-Macaulay by Proposition \ref{KempfProp} (1). Moreover,  assuming $\dim(C(E))=\dim(E)$, the Kempf collapsing $\rho:E\to C(E)$ is birational and $C(E)$ has rational singularities  by  Proposition \ref{KempfProp} (2).

\begin{re} In our examples in subsection \ref{sec:2-4-3}, the conditions (i) and (ii) can be found in the literature, or can be checked ``by hand". It is possible to give general sufficient conditions which imply (i) or (ii). For (i) one needs a Borel-Bott-Weil type argument as e.g. in \cite[Proposition 1.4]{BLVdB3}. For (ii) general results can be found in \cite[3.3.5, 3.3.6]{Po} or \cite[7.1.4]{We}.
	
\end{re}

\subsubsection{An algebraic description of \texorpdfstring{$\Lambda$}{str3} for higher rank varieties}
\label{sec:2-4-2}

Throughout this  section $(V,k,\lambda,\sigma)$ denotes a 4-tuple consisting of a complex vector space $V$ of dimension $N$, an integer $k$ with $1\leq k <N$, a partition $\lambda\in P(k)$, and a tensor $\sigma\in S^\lambda V$. Let $s_\sigma\in H^0(\Gr_k(V^\smvee), S^\lambda {\cal U}^\smvee)$ be the section in $S^\lambda {\cal U}^\smvee$ defined by $\sigma$. 

Recall that $\Lambda$ is the graded $\C$-algebra 
$$\Lambda= \bigoplus_{m\geq 0} H^0(\Gr_k(V^\smvee), {\cal T}_0^\smvee\otimes {\cal T}_0\otimes S^m {\cal E}^\smvee),
$$
and the section $s_\sigma$ is an element of the commutative ring $R\subset \Lambda$,
$$R=\bigoplus_{m\geq 0} H^0(\Gr_k(V^\smvee), S^m {\cal E}^\smvee).
$$
In this section we show that in the case of higher rank varieties, $\Lambda$ and the ideal $s\Lambda$ have a purely algebraic description in terms of the initial data $(V,k,\lambda,\sigma)$ provided Theorem \ref{co10} and Theorem \ref{th:2-11} apply.

We have shown in section \ref{Cequiv}, that $\Lambda$ can then be identified with the endomorphism algebra $\End_R(M)$ of the graded module
$$M\coloneqq \bigoplus_{m\geq 0} H^0(\Gr_k(V^\smvee), {\cal T}_0\otimes S^m {\cal E}^\smvee).
$$
Therefore we need an algebraic description  of $R$ as a graded ring, an identification of the element $s\in R_1$, and a description of $M$ as a graded $R$-module.

Let $S^\lambda V$ be the Schur representation of $\GL(V)$ defined by $\lambda\in P(k)$, and denote by $S=S^\bullet (S^\lambda V)=\bigoplus_{m\geq 0} S^m(S^\lambda V)$ the symmetric algebra  of $S^\lambda V$ endowed with its natural grading.  

\begin{pr} (Porras) \label{prop:2-12} The ideal $I_k$ defining the higher rank variety $C(S^\lambda T)$ consists of all representations $S^\mu V\subset S$, $\mu=(\mu_1,\cdots,\mu_t)$, with $t>k$. The graded ring $R$ is isomorphic to the graded quotient ring $S/I_k$.	
\end{pr}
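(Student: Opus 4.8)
\emph{The plan is} to compute both graded pieces $S_m=S^m(S^\lambda V)$ and $R_m$ as $\GL(V)$-representations, and then to identify the surjection $S\to R$ with the canonical projection onto the Schur components of length $\leq k$. Write $\ell(\mu)$ for the number of nonzero parts of a partition $\mu$. First I would record that, by Remark \ref{AffineConeRem} and the Borel-Weil identification $H=H^0(\Gr_k(V^\smvee),S^\lambda T^\smvee)=S^\lambda V$, the symmetric algebra $S=S^\bullet(S^\lambda V)$ is the coordinate ring $\C[H^\smvee]$ of the affine space in which the cone $C(S^\lambda T)$ sits. Hence the restriction map $S\to R=\C[C(S^\lambda T)]$ is a surjective homomorphism of graded $\C$-algebras, $\GL(V)$-equivariant and equal to the identity $S^\lambda V\to S^\lambda V$ in degree one; its kernel $I_k$ is therefore automatically a homogeneous $\GL(V)$-stable ideal and $R\cong S/I_k$. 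This already reduces statement (2) to computing $\ker(S\to R)$ degree by degree, which is statement (1).

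The representation-theoretic core is the universality of the polynomial functor $S^m\circ S^\lambda$. Decompose it into Schur functors, $S^m\circ S^\lambda=\bigoplus_\mu (S^\mu)^{\oplus c_\mu}$, with multiplicities $c_\mu$ that do not depend on the space to which the functor is applied, recalling that $S^\mu W=0$ precisely when $\ell(\mu)>\dim W$. Evaluating on $V$ gives
$$S_m=S^m(S^\lambda V)=\bigoplus_{\ell(\mu)\leq N}(S^\mu V)^{\oplus c_\mu},$$
while applying the same functor to the rank-$k$ bundle $T^\smvee$ yields the $\GL$-equivariant splitting
$$S^m{\cal E}^\smvee=S^m(S^\lambda T^\smvee)=\bigoplus_{\ell(\mu)\leq k}(S^\mu T^\smvee)^{\oplus c_\mu},$$
the length restriction $\ell(\mu)\leq k$ coming exactly from the vanishing $S^\mu T^\smvee=0$ for $\ell(\mu)>k$.

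Next I would pass to global sections. By the Borel-Weil theorem $H^0(\Gr_k(V^\smvee),S^\mu T^\smvee)=S^\mu V$ for every $\mu$ with $\ell(\mu)\leq k$, so, using Lemma \ref{l5} to identify $R_m$ with $H^0(\Gr_k(V^\smvee),S^m{\cal E}^\smvee)$,
$$R_m=\bigoplus_{\ell(\mu)\leq k}(S^\mu V)^{\oplus c_\mu}.$$
Thus $R_m$ carries exactly the same Schur components as $S_m$ with the same multiplicities $c_\mu$, but truncated to $\ell(\mu)\leq k$. The final step is purely formal: the map $S_m\to R_m$ is $\GL(V)$-equivariant, so by Schur's lemma it annihilates every isotypic component $S^\mu V$ with $\ell(\mu)>k$ (these are absent from the target), and on each component with $\ell(\mu)\leq k$ it is given by a $c_\mu\times c_\mu$ matrix. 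Since $S_m\to R_m$ is surjective (being a graded piece of the surjection $S\to R$) and the multiplicities match, each such matrix is invertible, so the map is an isomorphism on the $\ell(\mu)\leq k$ part. Consequently $\ker(S_m\to R_m)=\bigoplus_{\ell(\mu)>k}(S^\mu V)^{\oplus c_\mu}$; summing over $m$ gives $I_k=\bigoplus_{\ell(\mu)>k}(S^\mu V)^{\oplus c_\mu}$, which is statement (1), and $R\cong S/I_k$ is statement (2).

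I expect the \emph{main obstacle} to be the representation-theoretic bookkeeping of the second paragraph: making the universality of the plethysm decomposition precise enough that the multiplicities $c_\mu$ for $S^m(S^\lambda V)$ and for the bundle $S^m(S^\lambda T^\smvee)$ are literally the same integers, together with the Borel-Weil computation of the cohomology of Schur bundles on $\Gr_k(V^\smvee)$ and the vanishing for $\ell(\mu)>k$. Once these are in place the comparison of kernels is immediate. Since this is essentially Porras's theorem, I would cite \cite{Po} for the detailed verification of these two ingredients.
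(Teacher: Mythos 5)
The paper's own ``proof'' of this proposition is nothing but the two citations \cite[3.3.2]{Po} and \cite[3.3.3]{Po}, so any self-contained argument is necessarily a different route; your skeleton (plethysm universality, vanishing of $S^\mu T^\smvee$ for partitions $\mu$ of length $\ell(\mu)>k$, Borel--Weil on $\Gr_k(V^\smvee)$, and the Schur-lemma observation that a surjective $\GL(V)$-equivariant map acts on each isotypic block $\C^{c_\mu}\otimes S^\mu V$ by an invertible matrix) is sound as far as it goes, and is in fact close in spirit to Porras's geometric-technique proof.

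The genuine gap is your appeal to Lemma \ref{l5}. That lemma lives under the standing assumptions of section \ref{Cequiv} --- $B$ connected, $C(E)$ \emph{normal}, and $\rho$ \emph{birational} --- and its proof needs $\rho_*({\cal O}_E)={\cal O}_{C(E)}$. Normality of $C(S^\lambda T)$ you could get from Proposition \ref{KempfProp}(1), since the presentation is homogeneous, but birationality of the Kempf collapsing amounts to $\dim E=\dim C(E)$ and genuinely fails for some $(V,k,\lambda)$: for $\lambda=(1,0,\dots,0)$ and $2\leq k<N$ one has $C(T)=V^\smvee$ and the generic fibre of $\rho$ is a positive-dimensional Grassmannian $\Gr_{k-1}(V^\smvee/\langle v\rangle)$, yet Proposition \ref{prop:2-12} is stated (and true) unconditionally. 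Without Lemma \ref{l5} you only have the \emph{injection} $\rho^*:R\hookrightarrow\bigoplus_m H^0(\Gr_k(V^\smvee),S^m{\cal E}^\smvee)$ ($\rho$ being surjective), and then your bookkeeping proves only the inclusion $\bigoplus_{\ell(\mu)>k}(S^\mu V)^{\oplus c_\mu}\subseteq I_k$. The reverse inclusion is equivalent to surjectivity of the multiplication maps $S^m(S^\lambda V)\to H^0\big(\Gr_k(V^\smvee),S^m(S^\lambda T^\smvee)\big)$, i.e.\ to $\rho_*({\cal O}_E)={\cal O}_{C(E)}$ (projective normality of $S(E)$) --- and this is precisely the nontrivial content of the proposition. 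Your ``purely formal final step'' silently assumes it: surjectivity of $S_m\to R_m$ is tautological, but it gives nothing until $R_m$ has been identified with \emph{all} of $H^0(S^m{\cal E}^\smvee)$. Note also that the two ingredients you propose to outsource to \cite{Po} (plethysm universality and Borel--Weil) are standard; the load-bearing point is the one above. To close the gap without birationality, take sections of the bundle epimorphism $S^m(S^\lambda\underline{V})\twoheadrightarrow S^m(S^\lambda T^\smvee)$ induced by $\underline{V}\twoheadrightarrow T^\smvee$ and prove surjectivity on $H^0$ by the Bott-vanishing/filtration argument of \cite[3.5]{BLVdB2} --- the same device the paper invokes in the proof of Proposition \ref{prop:2-13} --- or use Kempf's theorem on collapsings of completely reducible homogeneous bundles \cite{Ke}, which is essentially how Porras proceeds; alternatively, state explicitly that you prove the proposition only under the standing hypothesis $\dim E=\dim C(E)$ of section \ref{Cequiv} and verify it in the cases at hand.
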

\begin{proof}
The first assertion is \cite[3.3.2]{Po}, the second is \cite[3.3.3]{Po}.	
\end{proof}

Recall that  $s_\sigma\in H^0(\Gr_k(V^\smvee), S^\lambda {\cal U}^\smvee)=R_1$ is given by the tensor $\sigma\in S^\lambda V$. Then $\bar \sigma\in S/I_k$ corresponds to $s_\sigma\in R$, hence $s_\sigma\Lambda$ is the ideal generated by $\bar \sigma\in R\subset\Lambda$.

In order to describe $M$ as a graded $R$-module we use the map
$$\varphi: S^{\lambda/1}V\otimes S\to V^\smvee\otimes S\langle 1\rangle 
$$
of free graded $S$-modules defined in \cite[3.2.1]{Po}.

Consider the Kapranov bundle 
$${\cal T}_0=\bigoplus_{\alpha\in P(k,N-k)} S^\alpha {\cal U}^\smvee, $$
and the graded $R$-module
$$M_\alpha=H^0(S^\lambda T, \pi^* S^\alpha {\cal U}^\smvee)=H^0(\Gr_k(V^\smvee), S^\alpha {\cal U}^\smvee\otimes S^\bullet(S^\lambda {\cal U}^\smvee)).
$$
We have $M=\bigoplus_{\alpha\in P(k,N-k)} M_\alpha$, so that it suffices to describe each $M_\alpha$ as a graded $R$-module. 
\begin{pr}\label{prop:2-13} $M_\alpha$ is isomorphic to the image of the following morphism of free  graded $R$-modules:
$$S^\alpha(\varphi^\smvee)\otimes R: S^\alpha V\otimes R\langle-1\rangle\to S^\alpha(S^{\lambda/1} V^\smvee)\otimes R.
$$
	
\end{pr}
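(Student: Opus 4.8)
The plan is to obtain the presentation of $M_\alpha$ by combining Porras' description of the map $\varphi$ with Bott's theorem on $\Gr_k(V^\smvee)$, the only genuinely new ingredient being the functoriality of the whole picture under the Schur functor $S^\alpha$.

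First I would assemble the cohomological input. Since $\alpha\in P(k,N-k)$ has at most $k$ rows, Borel--Weil gives $H^0(\Gr_k(V^\smvee),S^\alpha{\cal U}^\smvee)=S^\alpha V$, and more generally, by the vanishing (\ref{eq:*}) that makes ${\cal T}_0$ tilting (specialised to the summand ${\cal O}_B\subset{\cal T}_0^\smvee$, which exists because ${\cal T}_0$ contains $S^0{\cal U}^\smvee={\cal O}$), one has $H^{>0}\big(\Gr_k(V^\smvee),S^\alpha{\cal U}^\smvee\otimes S^m(S^\lambda{\cal U}^\smvee)\big)=0$ for all $m\ge 0$, using $S^m{\cal E}^\smvee=S^m(S^\lambda{\cal U}^\smvee)$. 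Hence the functor $\Gamma_\bullet(-)\coloneqq\bigoplus_m H^0(\Gr_k(V^\smvee),-\otimes S^m(S^\lambda{\cal U}^\smvee))$ is exact on the bounded complexes of locally free sheaves that occur below, and it carries a presentation of sheaves on the Grassmannian into a presentation of graded $R$-modules, where $R=S/I_k$ by Proposition \ref{prop:2-12}.

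Next I would unwind $\varphi$. The tautological inclusion $T\hookrightarrow V^\smvee\otimes{\cal O}$ together with the Pieri comultiplication $S^\lambda\to S^{\lambda/1}\otimes(-)$ produces the canonical map $\varphi\colon S^{\lambda/1}V\otimes S\to V^\smvee\otimes S\langle1\rangle$ of \cite[3.2.1]{Po}, whose linear part is the adjoint of the Pieri multiplication $V\otimes S^{\lambda/1}V\to S^\lambda V=S_1$; dualising and reducing modulo $I_k$ gives $\varphi^\smvee\otimes R\colon V\otimes R\langle-1\rangle\to S^{\lambda/1}V^\smvee\otimes R$. Geometrically this is precisely the map induced on $\Gamma_\bullet$ by applying $S^{\lambda/1}(-)^\smvee$ to the tautological sequence, so for $\alpha=(1)$ the image of $\varphi^\smvee\otimes R$ is the module $M_{(1)}=\Gamma_\bullet({\cal U}^\smvee)$; this is the content of Porras' computation, and is the base case I would verify first.

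For general $\alpha$ I would apply the Schur functor $S^\alpha$ to the tautological presentation underlying $\varphi$: this yields a two-term presentation of $S^\alpha{\cal U}^\smvee$ whose associated map of bundles is governed by $S^\alpha(\varphi^\smvee)$, and applying the exact functor $\Gamma_\bullet$ from the first step turns it into the $R$-linear map $S^\alpha(\varphi^\smvee)\otimes R\colon S^\alpha V\otimes R\langle-1\rangle\to S^\alpha(S^{\lambda/1}V^\smvee)\otimes R$ whose image is $M_\alpha$. I would then confirm the identification degree by degree: in degree $m$ both sides decompose, via Borel--Weil, into the Schur modules $S^\mu V$ appearing in $H^0\big(S^\alpha{\cal U}^\smvee\otimes S^m(S^\lambda{\cal U}^\smvee)\big)$, and the Pieri rule matches these multiplicities with the image of $S^\alpha(\varphi^\smvee)$. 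The main obstacle is precisely the commutation of $S^\alpha$ with the formation of the image and with $\Gamma_\bullet$: one must check that applying $S^\alpha$ to the short exact sequences on $\Gr_k(V^\smvee)$ yields sheaf sequences that remain acyclic after twisting by $S^m(S^\lambda{\cal U}^\smvee)$, so that $\Gamma_\bullet$ preserves exactness and images. This is exactly where the Kapranov range $\alpha\in P(k,N-k)$ and the Bott vanishing of the first step are indispensable; once this acyclicity is secured, the identification $M_\alpha\simeq\im(S^\alpha(\varphi^\smvee)\otimes R)$ becomes a formal consequence of Borel--Weil and the Pieri rule.
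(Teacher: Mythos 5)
Your geometric skeleton is the same as the paper's: interpret Porras' map via the tautological epi--mono factorization $S^\alpha\underline{V}\twoheadrightarrow \pi^*S^\alpha{\cal U}^\smvee\hookrightarrow S^\alpha(S^{\lambda/1}\underline{V}^\smvee)$ obtained by pulling $\tilde\varphi^\smvee$ back through the Kempf collapsing and applying the Schur functor, then take global sections; the mono half is handled by left-exactness of $H^0$ in both treatments. But there is a genuine gap at the decisive step, and your own wording concedes it: you reduce everything to the assertion that $\Gamma_\bullet$ ``preserves exactness and images'' and say that ``once this acyclicity is secured'' the rest is formal. The acyclicity you actually supply --- $H^{>0}\big(\Gr_k(V^\smvee),S^\alpha{\cal U}^\smvee\otimes S^m{\cal E}^\smvee\big)=0$, extracted from the tilting condition (\ref{eq:*}) via the ${\cal O}$-summand of ${\cal T}_0$ --- is vanishing for the \emph{terms} of your two-term presentation, and vanishing of higher cohomology of the terms of a complex does not make $H^0$ of that complex exact. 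What is needed is surjectivity on sections of the epi part, i.e.\ control of the kernel of $S^\alpha\underline{V}\to\pi^*S^\alpha{\cal U}^\smvee$ twisted by $S^m(S^\lambda{\cal U}^\smvee)$. That kernel is filtered, by Littlewood--Richardson, with graded pieces $S^\mu{\cal Q}^\smvee\otimes S^\nu{\cal U}^\smvee$ ($\mu\neq\emptyset$) involving the tautological \emph{quotient} bundle ${\cal Q}$; these are not among the summands of ${\cal T}_0^\smvee\otimes{\cal T}_0$, so (\ref{eq:*}) says nothing about them, and genuine Borel--Bott--Weil computations are required. This is exactly the point where the paper invokes the filtration argument of \cite[3.5]{BLVdB2} verbatim, to show that the composition (\ref{eq:**}) remains surjective after taking sections on $\Gr_k(V^\smvee)$.

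Your fallback --- matching multiplicities degree by degree via Borel--Weil and the Pieri rule --- does not close this gap. Equivariance determines the image of $S^\alpha(\varphi^\smvee)\otimes R$ from isotypic data only when the relevant isotypic components of the target occur with multiplicity one; they need not (the paper itself notes, in the Beauville--Donagi example, that $S^\bullet(S^3V)$ is not multiplicity free), so a multiplicity count can bound but not determine the image. To complete your argument you must either import the surjectivity/filtration argument of \cite[3.5]{BLVdB2}, as the paper does, or carry out the Bott-vanishing computation for the filtration of $\ker\big(S^\alpha\underline{V}\to\pi^*S^\alpha{\cal U}^\smvee\big)\otimes S^m(S^\lambda{\cal U}^\smvee)$ directly. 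The remaining ingredients of your proposal --- the identification of the linear part of $\varphi$ as the Pieri adjoint, the base case $\alpha=(1)$ via Porras, the use of $R\simeq S/I_k$, and the left-exact treatment of the mono side --- are correct and agree with the paper.
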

\begin{proof}
The map $\varphi: S^{\lambda/1} V\otimes S\to V^\smvee \otimes S\langle 1\rangle$	 corresponds to a map of trivial vector bundles over $S^\lambda V^\smvee$:
$$
\begin{tikzcd}
S^{\lambda/1}\underline{V}  \ar[dr] \ar[rr, "\tilde\varphi"] & &\underline{V}^\smvee \ar[dl]\\
&S^\lambda V^\smvee &
\end{tikzcd}.
$$
The pull-back of its dual $\tilde \varphi^\smvee$ via the Kempf collapsing $\rho:S^\lambda T\to C(S^\lambda T)\subset S^\lambda V^\smvee$ induces a map 
$$\rho^* \tilde \varphi^\smvee: S^\lambda T\times V\to S^\lambda T\times S^{\lambda/1} V^\smvee
$$
which factorizes over $\pi^* {\cal U}^\smvee$:
$$
\begin{tikzcd}
S^{\lambda}T \times V  \ar[dr, "\varepsilon"'] \ar[rr, "\rho^* \tilde \varphi^\smvee"] & &S^{\lambda}T\times S^{\lambda/1} V^\smvee \\
&\pi^* {\cal U}^\smvee \ar[ur, "\iota"'] &
\end{tikzcd} 
$$
Here $\varepsilon$ is a vector bundle epimorphism, and $\iota$ is a sheaf monomorphism. Applying the Schur functor $S^\alpha$ yields an epi-mono factorization:
\begin{equation}\label{epi-mono}
\begin{tikzcd}
S^\alpha\underline{V}\ar[r, two heads] &\pi^* S^\alpha {\cal U}^\smvee \ar[r, hook] &  S^\alpha(S^{\lambda/1}\underline{V}^\smvee).
\end{tikzcd}
\end{equation}
Now the argument of \cite[3.5]{BLVdB2} can be used: it suffices to show that the composition 

\begin{equation}\label{eq:**}
\begin{tikzcd}
S^\alpha\underline{V}\otimes S^\bullet(S^\lambda\underline{V})\ar[r] & S^\alpha\underline{V}\otimes S^\bullet (S^\lambda {\cal U}^\smvee) \ar[r]& S^\alpha{\cal U}^\smvee\otimes S^\bullet (S^\lambda {\cal U}^\smvee)
\end{tikzcd}
\end{equation}
remains surjective after taking sections on $\Gr_k(V^\smvee)$. For this the filtration argument in \cite[3.5]{BLVdB2} applies verbatim.
\end{proof}
We can now state our final result:
\begin{thry}\label{th:2-14}
Let $(V,k,\lambda,\sigma)$ be the initial data as above, such that 
$$s_\sigma\in H^0(\Gr_k(V^\smvee), S^\lambda {\cal U}^\smvee)$$
 is a regular section with zero locus $Z(s_\sigma)\subset \Gr_k(V^\smvee)$. Let $I_k\subset S$ be the graded ideal consisting of all representations $S^{(\mu_1,\cdots,\mu_t)} V\subset S$ with $t>k$, and let $\bar \sigma\in S/I_k$ be the image of $\sigma$.  Let $\varphi: S^{\lambda/1} V\otimes S\to V^\smvee\otimes S\langle 1\rangle $ be Porras' map. If one of the conditions in (1) or (2) of Theorem \ref{co10}, and both of the conditions (i) and (ii) of Theorem \ref{th:2-11}	 are satisfied, then the bounded derived category of $Z(s_\sigma)$ has the following purely algebraic description in terms of the initial data $(V,k,\lambda,\sigma)$:
$$D^b(\coh Z(s_\sigma))\simeq D^{\mathrm{gr}}_{\mathrm{sg}}\big(\End_{S/I_k}\big(\bigoplus_{\alpha\in P(k,N-k)}\mathrm{Im}(S^\alpha(\varphi^\smvee)\otimes (S/I_k))\big)\big/\langle\bar\sigma\rangle\big).
$$

\end{thry}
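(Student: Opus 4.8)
The plan is to obtain the statement by chaining together the equivalences and algebraic identifications established earlier, verifying at each junction that the maps involved are isomorphisms of \emph{graded} rings (respectively graded modules) and that the distinguished central element $s_\sigma$ is transported correctly; the final glue is that $D^{\mathrm{gr}}_{\mathrm{sg}}$ depends only on the graded quotient algebra up to graded isomorphism.

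First I would invoke Theorem \ref{th:2-11}. Conditions (i) and (ii) of that theorem (the cohomology vanishing (\ref{eq:*}) making ${\cal T}=\pi^*({\cal T}_0)$ a tilting object, together with $\dim(C(E))=\dim(E)$) place us in the situation of Theorem \ref{FourthEq} and yield
$$D^b(\coh Z(s_\sigma))\simeq D^{\mathrm{gr}}_{\mathrm{sg}}(\Lambda/ s_\sigma\Lambda),$$
where $\Lambda=\bigoplus_{m\geq 0} H^0(\Gr_k(V^\smvee), {\cal T}_0^\smvee\otimes {\cal T}_0\otimes S^m {\cal E}^\smvee)$ and $s_\sigma\in R_1\subset\Lambda$ is the central non-zero-divisor attached to $\sigma$. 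This reduces the problem to giving a purely algebraic model of the graded quotient algebra $\Lambda/s_\sigma\Lambda$.

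Next I would replace $\Lambda$ and $s_\sigma\Lambda$ by their algebraic avatars. Since one of the conditions (1), (2) of Theorem \ref{co10} holds, that theorem identifies $\Lambda$ with $\End_R(M)$ as graded rings, for $M=\bigoplus_{\alpha\in P(k,N-k)}M_\alpha$. Proposition \ref{prop:2-12} (Porras) identifies the graded ring $R$ with $S/I_k$, and under this identification $s_\sigma\in R_1$ becomes $\bar\sigma\in (S/I_k)_1$, as already recorded in the discussion preceding Proposition \ref{prop:2-13}. Proposition \ref{prop:2-13} then describes each summand as $M_\alpha\cong\mathrm{Im}\big(S^\alpha(\varphi^\smvee)\otimes (S/I_k)\big)$, so that
$$M\cong\bigoplus_{\alpha\in P(k,N-k)}\mathrm{Im}\!\big(S^\alpha(\varphi^\smvee)\otimes (S/I_k)\big)$$
as graded $S/I_k$-modules, whence $\Lambda\cong\End_{S/I_k}(M)$ with this explicit $M$. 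Finally, because $R$ embeds in $\Lambda=\End_R(M)$ as the scalar endomorphisms (the isomorphism of Lemma \ref{l6} restricts to the identity on $R$), the central element $s_\sigma$ acts as multiplication by $\bar\sigma$, so $s_\sigma\Lambda$ corresponds exactly to the two-sided ideal $\langle\bar\sigma\rangle$, and
$$\Lambda/s_\sigma\Lambda\;\cong\;\End_{S/I_k}\!\Big(\bigoplus_{\alpha\in P(k,N-k)}\mathrm{Im}\big(S^\alpha(\varphi^\smvee)\otimes (S/I_k)\big)\Big)\big/\langle\bar\sigma\rangle$$
as graded algebras. Substituting this into the equivalence of the first step gives the claim.

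The step requiring the most care — the main obstacle — is the bookkeeping at these junctions rather than any single deep argument. One must confirm that the isomorphism $\Lambda\cong\End_R(M)$ of Theorem \ref{co10} is an isomorphism of \emph{graded} rings that is the identity on the subring $R$ (so that the central $s_\sigma$ is sent to scalar multiplication by $\bar\sigma$ and the ideals match precisely), and that the summand identifications of Proposition \ref{prop:2-13} are compatible both with this grading and with the $R=S/I_k$-module structure. Once these compatibilities are verified, the invariance of $D^{\mathrm{gr}}_{\mathrm{sg}}$ under graded isomorphism closes the argument.
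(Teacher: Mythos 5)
Your proposal is correct and matches the paper's (implicit) argument exactly: Theorem \ref{th:2-14} is stated there without a separate proof precisely because it is the assembly of Theorem \ref{th:2-11}, the identification $\Lambda\simeq\End_R(M)$ of graded rings from Theorem \ref{co10} (via Lemmas \ref{l6} and \ref{l7}), Porras' identification $R\simeq S/I_k$ with $s_\sigma\mapsto\bar\sigma$ (Proposition \ref{prop:2-12}), and the description of $M_\alpha$ in Proposition \ref{prop:2-13}. Your attention to the grading compatibilities and to the transport of the central element $s_\sigma$ is exactly the bookkeeping the paper relies on (and records explicitly in Lemma \ref{l7} and in the discussion preceding Proposition \ref{prop:2-13}).
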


\subsubsection{Some concrete examples}
\label{sec:2-4-3}

\begin{enumerate}[1.]
\item {\it Complete intersections.}  In this case the algebra $\Lambda$ can be described explicitly, and the conditions (i) and (ii) of Theorem \ref{th:2-11} can be easily checked. Choose $k\coloneqq \dim(Z)=1$, $F_i=S^{d_i} Z^\smvee$. In this case we have $\GL(Z)\simeq\C^*$, $\Gr_k(V^\smvee)=\P(V^\smvee)$, and
$$E=|\bigoplus_{i=1}^r {\cal O}_{V^\smvee}(-d_i)|, \  H^0(\P(V^\smvee),E^\smvee)=\bigoplus_{i=1}^r S^{d_i} V.
$$
The zero locus $Z(s)\subset \P(V^\smvee)$ of a section  $s\in\Gamma(\P(V^\smvee),E^\smvee)$ is the complete intersection 
$$\bigcap_{i=1}^r Z_h(\sigma_i)\subset \P(V^\smvee),$$
where 
$$\sigma=(\sigma_1,\dots,\sigma_r)\in \bigoplus_{i=1}^r S^{d_i}V$$
 is the system of polynomials defining $s$. The corresponding GLSM presentation is the 4-tuple 
 $$(\GL(Z)\simeq\C^*,\,\Hom(V,Z),\,\bigoplus_{i=1}^r S^{d_i} Z^\smvee,\,{\det}^{t})$$ with $t\in\N_{>0}$. 

 The map 
 $$ \vartheta:E\to H^0(\P(V^\smvee),E^\smvee)^\smvee= \bigoplus_i S^{d_i} V^\smvee$$
  acts as follows: The fibre $E_{l}$ over a point $l \in \P(V^\smvee)$  is the product $\bigtimes_{i=1}^r l^{\otimes d_i}$, and its image $ \vartheta(E_l)$ in $\bigoplus_i S^{d_i} V^\smvee $ is the $r$-dimensional subspace $\bigoplus_{i=1}^r l^{\otimes d_i}\subset \bigoplus_i S^{d_i} V^\smvee$ spanned by the lines $l^{\otimes d_i}\subset S^{d_i} V$. Therefore $\P(\vartheta)$ is an embedding,  and the  corresponding projective variety 
 $$S(E)\coloneqq \P(\vartheta)(\P(E))=\union_{l\in \P(V^\smvee)}\P(\bigoplus_{i=1}^r l^{\otimes d_i})\subset \P(\bigoplus_{i=1}^r S^{d_i} V^\smvee)$$
 is smooth. According to Corollary \ref{PropertiesS(E)} the projective variety $S(E)$ is projectively normal, arithmetically  Cohen-Macaulay, and the vertex of its affine cone $C(E)$ (its only singularity)  is a  rational singularity.  Moreover,  if $N=\sum_{i=1}^r d_i$  then $S(E)$ is also arithmetically Gorenstein.
  
The graded ring $R=\C[C(E)]=\C[\Hom(V,Z)\oplus(\oplus_{i=1}^r S^{d_i}Z^\smvee)]^{\GL(Z)} $ is
 $$R=\bigoplus_{m\geq 0} H^0\big(\P(V^\smvee), S^m(\bigoplus_{i=1}^r {\cal O}_{V^\smvee}(d_i))\big)=\bigoplus_{m\geq 0} H^0\big(\P(V^\smvee), \bigoplus_{\substack{k\in \N^r\\|k|=m}} {\cal O}_{V^\smvee}(\sum_{i=1}^r k_id_i)\big)
 $$
 $$=\bigoplus_{m\geq 0}\big(\bigoplus_{\substack{k\in \N^r\\|k|=m}}S^{\sum_{i=1}^r k_id_i}(V)\big),
 $$
 and its multiplication  is given by the obvious bilinear maps
 $$S^{\sum_{i=1}^r k_id_i}(V)\times S^{\sum_{i=1}^r l_id_i}(V)\to S^{\sum_{i=1}^r (k_i+l_i)d_i}(V).
 $$

 The locally free sheaf   
 $${\cal T}_0\coloneqq \bigoplus_{i=0}^{N-1}{\cal O}_{V^\smvee}(i)  
 $$
generates $D^b(\coh\P(V^\smvee))$ classically, and satisfies the condition of Corollary \ref{co3} (3), so ${\cal T}\coloneqq \pi^*({\cal T}_0)$ is a tilting bundle on $E$ and  Theorem \ref{FourthEq} applies, giving a purely algebraic interpretation of the derived category of the complete intersection $Z(s)=\cap_{i=1}^r Z_h(\sigma_i)$:
 
  The graded ring $\Lambda=\End_E({\cal T})$ intervening in this algebraic interpretation  is 
 $$\Lambda=\bigoplus_{m\geq 0}H^0(\P(V^\smvee),{\cal T}_0^\smvee\otimes {\cal T}_0\otimes S^m({\cal E}^\smvee))=\bigoplus_{m\geq 0}\bigoplus_{\substack{0\leq a\leq N-1\\ 0\leq b\leq N-1}} \bigoplus_{\substack{k\in \N^r\\ |k|=m}}S^{\sum_{i=1}^r k_i d_i+b-a}(V)
 $$
 and its multiplication is induced by the obvious bilinear maps
 $$S^{\sum_{i=1}^r k_i d_i+b-a}(V)\times  S^{\sum_{i=1}^r l_i d_i+c-b}(V)\to S^{\sum_{i=1}^r (k_i+l_i) d_i+c-a}(V).
 $$
 Note also that, by Corollary \ref{co9}, in the case $N=\sum_{i=1}^r d_i$ the ring $\Lambda$ is a crepant resolution of $R$. 
 
 \vspace{3mm}

 Interesting special cases of this family of  Abelian GLSM presentations  associated with complete intersections  have been studied by several authors. The case $N=5$, $r=1$, $d_1=5$ reproduces Witten's original GLSM \cite{Wi}. In \cite{CDHPS} the authors study the cases   
$$(d_1,\dots,d_r)\in\big\{(2,2), (2,2,2),(2,2,2,2),(3),(3,3)\big\}$$
 for different  values of $N=\dim(V)$.  \\
 
 In the following two cases we use Theorem \ref{th:2-11}, so it suffices to verify conditions (i) and (ii) of Theorem \ref{th:2-11}. 
 \\  
 
 \item {\it Isotropic orthogonal Grassmannians.} Let $q\in S^2V$ be a non-degenerate quadratic form on $V^\smvee$. Let $1\leq k\leq\frac{N}{2}$. The isotropic Grassmannian $\Gr_k^q(V^\smvee)\subset \Gr_k(V^\smvee)$ is the submanifold of $k$-dimensional isotropic subspaces of $(V^\smvee,q)$:
$$\Gr_k^q(V^\smvee)\coloneqq \{K\subset V^\smvee|\ K\hbox{ linear subspace of dimension }k,\ \resto{q}{K}\equiv 0\}.$$
The form $q$ defines a section $s_q\in \Gamma(\Gr_k(V^\smvee),S^2 T^\smvee)$ which is transversal to the zero section, and whose zero locus is  $\Gr_k^q(V^\smvee)$.  Therefore 
$$\dim(\Gr_k^q(V^\smvee))=k(N-k)-\binom{k+1}{2},$$
and the 4-tuple $(\GL(Z),\Hom(V,Z),S^2 Z^\smvee,\det^t)$ (with  $t\in \N_{>0}$) is a GLSM presentation  of   $(\extp^2 T\to  \Gr_k(V^\smvee),s_q)$.

Note that $\Gr_k^q(V^\smvee)$ comes with an obvious action of the group $\SO(V^\smvee,q)$. This action is transitive unless $N=2k$ \cite[section 4]{BKT}. In the latter case $\Gr_k^q(V^\smvee)$ has two connected components $\Gr_k^q(V^\smvee)_\pm$, and $\SO(V^\smvee,q)$ acts transitively on each component. One has isomorphisms   $\Gr_k^q(V^\smvee)_\pm\simeq \Gr_{k-1}^{q_U}(U)$, where $U\subset V^\smvee$ is a  general hyperplane,  and $q_U$ is the restriction of $q$ to $U$, given by intersecting with $U$.  The conditions (i) and (ii) of Theorem \ref{th:2-11} are in \cite[Theorem B]{WZ}.
\\

\item {\it Isotropic symplectic Grassmannians.}. Let $V$ be a complex vector space of even dimension $N=2n$, and let $\omega\in \extp^2V$ be a symplectic form on $V^\smvee$. For even $k$ with $2\leq k\leq n$, the isotropic Grassmannian $\Gr_k^\omega(V^\smvee)\subset \Gr_k(V^\smvee)$  is the submanifold of $k$-dimensional isotropic subspaces of $(V^\smvee,\omega)$:
$$\Gr_k^\omega(V^\smvee)\coloneqq \{K\subset V^\smvee|\ K\hbox{ linear subspace of dimension }k,\ \resto{\omega}{K}\equiv 0\}.$$
Denoting by $T$ the tautological $k$-bundle of $\Gr_k(V^\smvee)$, the form $\omega$ defines a section $s_\omega\in \Gamma(\Gr_k(V^\smvee),\extp^2 T^\smvee)$ which is transversal to the zero section, and whose zero locus is  $\Gr_k^\omega(V^\smvee)$.  Therefore 
$$\dim(\Gr_k^\omega(V^\smvee))=k(N-k)-\binom{k}{2},$$
and the 4-tuple $(\GL(Z),\Hom(V,Z),\extp^2 Z^\smvee,\det^t)$ (with  $t\in \N_{>0}$) is a GLSM presentation  of   $(\extp^2 T \to  \Gr_k(V^\smvee),s_\omega)$. Note that $\Gr_k^\omega(V^\smvee)$ comes with a transitive action of the group $\mathrm{Sp}(V^\smvee,\omega)$ \cite[section 4]{BKT}. The conditions (i) and (ii) of  Theorem \ref{th:2-11} are in \cite[Theorem C]{WZ}.\\

In our final example we can apply Theorem \ref{th:2-14}. We obtain a graded crepant nc  resolution of the invariant ring $R$, and a purely algebraic description of $D^b(\coh(Z(s_\sigma)))$.\\
\item {\it Beauville-Donagi IHS 4-folds. } Let $V$ be a complex vector space of dimension $N$, and $\sigma\in S^3V$. Choosing $k=\dim(Z)=2$, $r=1$, and $F_1=S^3 Z^\smvee$, we get the bundle $S^3 T$ on $\Gr_2(V^\smvee)$, and a section $s_\sigma\in H^0(\Gr_2(V^\smvee), S^3{\cal U}^\smvee)$. The zero locus $Z(s_\sigma)\subset \Gr_2(V^\smvee)$  is the Fano variety of lines in the cubic hypersurface  $Z_h(\sigma)\subset P(V^\smvee)$ defined by $\sigma\in S^3 V=H^0(\P(V^\smvee),{\cal O}_{V^\smvee}(3))$. For $N=6$ and $\sigma$ general, $Z(s_\sigma)$ is a Beauville-Donagi IHS 4-fold \cite{BD}. Note that the symmetric algebra $S=S^\bullet(S^3V)$ is not multiplicity free. Condition (i) of Theorem \ref{th:2-11} is in \cite[section 2]{Kan}, and we have verified the vanishing condition (ii) by a Borel-Bott-Weil computation.

In order to see that condition (2)(i) of Theorem \ref{co10} is satisfied, we prove a general lemma which computes the codimension of the pre-image of the exceptional locus of the Kempf collapsing in this case.

Let $V$ be a complex vector space of dimension $N$, and $d$, $k$ be positive integers with $d\geq 2$, $k<N$. Let $T_k$  be the tautological subbundle of $\Gr_k(V^\smvee)$, and $\rho: S^dT_k\to C(S^d T_k)\subset S^d V^\smvee$ the Kempf collapsing.  The cone $C(S^d T_k)$ can be described as follows: 

For an element $q\in S^dV^\smvee$ put
$$L_q\coloneqq \bigcap_{\substack{L\subset V^\smvee\\ q\in S^d L}} L\ ,\ \ \rk(q)\coloneqq \dim(L_q).
$$ 
It is easy to see that $L_q$ is the minimal subspace of $V^\smvee$ whose $d$-symmetric power contains $q$, and this definition of $\rk(q)$ agrees with the algebraic definition, i.e. with the rank of the linear map $V\to S^{d-1} V^\smvee$ associated with $q$. 
One has
$$C(S^d T_k)=S^dV^\smvee_{\leq k}\coloneqq \{q\in S^d V^\smvee|\ \rk(q)\leq k\},
$$
so  $C(S^d T_k)$ coincides with the catalecticant variety associated with the triple $(V,d,k)$.   In  \cite[Theorem 2.2]{Kan} Kanev shows that this variety is irreducible, normal, Cohen-Macauley of dimension $\binom{k+d-1}{d}+k(N-k)$, with rational singularities along $\mathrm{Sing}(S^dV^\smvee_{\leq k})= S^dV^\smvee_{\leq k-1}$. For $0\leq r\leq N$ put
$$S^dV^\smvee_r\coloneqq S^dV^\smvee_{\leq r}\,\setminus\, S^dV^\smvee_{\leq r-1}.
$$
For a point $q\in S^dV^\smvee_r$,   $L_q$ is the unique $r$-dimensional subspace of $V^\smvee$ whose $d$-symmetric power contains $q$.  The map $\gamma_r: S^dV^\smvee_{r}\to \Gr_r(V^\smvee)$ given by $\gamma_r(q)=L_q$ is regular; its graph is the intersection
$$\Gamma_r=[S^dV^\smvee_{r}\times \Gr_r(V^\smvee)]\cap S^dT_r.
$$
Let now $r\leq k$. The restriction
$$\resto{\rho}{\rho^{-1}(S^dV^\smvee_{r})}: \rho^{-1}(S^dV^\smvee_{r})\to S^dV^\smvee_{r}
$$
is a fiber bundle. Its fiber over a point $q\in S^dV^\smvee_{r}$ can be identified with 
$$\{ U\in \Gr_k(V^\vee)|\ L_q\subset  U\}\simeq\Gr_{k-r}(V^\smvee/L_q). 
$$

This proves: 
\begin{lm}
With the notations above the following holds:
\begin{enumerate}[(1)]
\item $\mathrm{Exc}(\rho)=S^dV^\smvee_{\leq k-1}$.
\item For $0\leq r\leq k$ one has 
$$\dim(\rho^{-1}(S^dV^\smvee_{r}))=\binom{r+d-1}{d}+r(N-r)+ (k-r)(N-k),$$
$$\mathrm{codim}_{S^dT_k}(\rho^{-1}(S^dV^\smvee_{r}))=\binom{k+d-1}{d}-\binom{r+d-1}{d}-r(k-r).
$$
\item One has
 $$\mathrm{codim}_{S^dT_k}(\rho^{-1}(\mathrm{Exc}(\rho)))=\underset{0\leq r < k}{\min}\bigg\{\binom{k+d-1}{d}-\binom{r+d-1}{d}-r(k-r)\bigg\}.
 $$
\end{enumerate}	
\end{lm}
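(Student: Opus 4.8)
The plan is to organize the whole argument around the rank stratification $S^dV^\smvee=\bigsqcup_{r=0}^{k} S^dV^\smvee_r$ and the fiberwise description of $\rho$ recorded just above the statement. For (1), I would use that the fiber of $\rho$ over a point $q\in S^dV^\smvee_r$ is $\{U\in\Gr_k(V^\smvee)\mid L_q\subset U\}\simeq\Gr_{k-r}(V^\smvee/L_q)$, which is a single point when $r=k$ and has positive dimension $(k-r)(N-k)>0$ when $r<k$ (this is where $k<N$ enters). Since $\rho$ is the Kempf collapsing and is proper and birational onto the normal variety $C(S^dT_k)$ (Proposition \ref{KempfProp}, using Kanev's dimension count), Zariski's main theorem identifies $\mathrm{Exc}(\rho)$ with the locus of points having positive-dimensional fiber; hence $\mathrm{Exc}(\rho)=S^dV^\smvee_{\leq k-1}$.

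For (2), I would first compute $\dim S^dV^\smvee_r$ by identifying $S^dV^\smvee_r$ with its graph $\Gamma_r$, which is an open subset of the total space of the rank-$\binom{r+d-1}{d}$ vector bundle $S^dT_r\to\Gr_r(V^\smvee)$. Openness holds because the condition $\rk(q)\leq r-1$ is closed, thanks to the algebraic description of the rank as the rank of the associated map $V\to S^{d-1}V^\smvee$, cut out by catalecticant minors; nonemptiness of the rank-exactly-$r$ locus in each fiber $S^dL$ holds since a generic degree-$d$ form on an $r$-dimensional space is non-degenerate. Thus $\dim S^dV^\smvee_r=\binom{r+d-1}{d}+r(N-r)$, and adding the relative dimension $(k-r)(N-k)$ of the $\Gr_{k-r}(V^\smvee/L_q)$-bundle $\rho^{-1}(S^dV^\smvee_r)\to S^dV^\smvee_r$ gives the first displayed dimension. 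Subtracting from $\dim S^dT_k=\binom{k+d-1}{d}+k(N-k)$ and using $k(N-k)-(k-r)(N-k)=r(N-k)$ together with $r(N-k)-r(N-r)=-r(k-r)$ yields the codimension formula.

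For (3), I would write $\rho^{-1}(\mathrm{Exc}(\rho))=\bigsqcup_{0\leq r<k}\rho^{-1}(S^dV^\smvee_r)$ as a finite decomposition into locally closed strata. Since the codimension of a finite union of locally closed subsets is the minimum of the codimensions of the pieces (the closure of a union is the union of the closures, so dimensions combine as a maximum), the formula in (3) follows at once from (2). The only point requiring genuine care is the dimension count in (2) — namely that the rank-exactly-$r$ stratum is open and dense in $S^dT_r$ so that it attains the full bundle dimension; everything else is bookkeeping of fiber-bundle dimensions and an elementary binomial simplification.
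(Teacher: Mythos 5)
Your proof is correct and follows essentially the same route as the paper, whose own argument is exactly the preceding discussion: the rank stratification of $C(S^dT_k)=S^dV^\smvee_{\leq k}$, the identification of $S^dV^\smvee_r$ with the graph $\Gamma_r$ sitting as an open dense subset of $S^dT_r$, and the $\Gr_{k-r}(V^\smvee/L_q)$-bundle structure of $\rho$ over each stratum, concluded by ``This proves''. The points you add --- the Zariski-main-theorem justification of (1) via positive-dimensional fibers over a normal target, and the explicit openness/nonemptiness of the rank-exactly-$r$ locus in $S^dT_r$ --- merely spell out steps the paper leaves implicit.
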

\begin{re}
In the case $d=2$, the codimension of $\rho^{-1}(\mathrm{Exc}(\rho))$ in $S^dT_k$ is always 1.	
\end{re}

\end{enumerate}

\end{document}